\numberwithin{equation}{section}
\newtheorem{theorem}{Theorem}[section]
\newtheorem{corollary}[theorem]{Corollary}
\newtheorem{definition}[theorem]{Definition}
\newtheorem{convention}[theorem]{Convention}
\newtheorem{lemma}[theorem]{Lemma}
\newtheorem{proposition}[theorem]{Proposition}
\newtheorem{remark}[theorem]{Remark}
\def\N{\mathbb{N}}
\def\R{\mathbb{R}}
\def\Q{\mathbb{Q}}
\def\AA{\mathcal A}
\def\SS{\mathcal S}
\def\RR{\mathcal R}
\def\FF{\mathcal F}
\def\CC{\mathcal C}
\def\M{\mathcal M}
\def\la{\langle}
\def\ra{\rangle}
\def\nn{\|\cdot\|}
\def\oo{\sqsubset\!\!\!\sqsupset}
\def\ee{\varepsilon}
\def\dd{\delta}
\def\gg{\gamma}
\title{Separable reduction of Fr\'echet subdifferentiability in Asplund spaces}
\author{Marek C\'uth, Mari\'an Fabian}
\address[M.~C\' uth]{Instytut Matematyczny Polskiej Akademii Nauk, \' Sniadeckich 8, 00-656 Warszawa, Poland}
\address[M.~Fabian]{Mathematical Institute of Czech Academy of Sciences, \v Zitn\'a 25, 115 67 Praha 1, Czech Republic}
\email{marek.cuth@gmail.com}
\email{fabian@math.cas.cz}
\subjclass[2010]{46B26, 58C20, 46B20, 03C30}
\thanks{The first author was supported by Warsaw Center of Matemathics and Computer Science (KNOW--MNSzW). 
The second author was supported by grant P201/12/0290 and by RVO: 67985840. }
\keywords{Asplund space, Asplund generator, separable reduction, rich family, Fr\'echet subdifferential, Fr\'echet normal cone,
fuzzy calculus, extremal principle, method of suitable models}
\date{\today}
\begin{document}

\begin{abstract}
In the framework of Asplund spaces, we use two
equivalent instruments ---rich families and suitable models 
from logic--- for performing separable reductions of various statements on Fr\'echet subdifferentiability of functions.
This way, isometrical results are actually obtained and several existed proofs are substantially simplified.
Everything is based on a new structural characterization of Asplund spaces.
\end{abstract}

\maketitle

\section{Introduction}

Separable reductions for Fr\'echet (sub)diffrerentiability originated some three-four decades ago in works by
D. Gregory \cite[pages 23, 24, 37]{ph}, D. Preiss \cite{pr}, and M. Fabian, N. V. Zhivkov \cite{fz}. In all these papers and in many subsequent ones until the recent contributions ---see \cite{pe}, \cite{i}, \cite{fi1}, \cite{fi2}--- there was a common believe that 
for a successful performing separable reductions of statements involving
Fr\'echet subdifferentiability or differentiability (like non-emptiness of subdifferential, fuzzy calculus, etc.), 
it is necessary to first translate such statements completely
into terms of the Banach space $X$ in question (with no use of its dual $X^*$). 
The present paper destroys this longstanding taboo in the case when we restrict to the framework of Asplund spaces. 
Indeed, once we have at hand a deeper structural characterization
of the Asplund property (see Theorem~\ref{dno}), we can work with the original definition of Fr\'echet (sub)differentiability
(``... there exists an element of \tt the dual $X^*$ \rm such that ...''). 
This way, separable reductions in Asplund spaces can be substantialy simplified and obtained results become exact
(see Theorem~\ref{nova}); for comparison cf. \cite{fi2}.
We believe that such a simplification does not only simplify proofs of existing separable reduction theorems, but it also offers a way how to obtain new results.

Section 2 gathers a partly new background from the world of Asplund spaces.
This is done in the nowadays modern language of rich families.
We believe that a structural result here, Theorem~\ref{dno}, will have applications also beyond the scope of this paper. Section 3 is mostly devoted to the separable reduction of a rather general assertion
involving Fr\'echet subdifferentials (see, Theorem~\ref{nova}). It serves as a common roof for several more concrete 
statements like non-emptiness of Fr\'echet subdifferential, fuzzy calculus, extremal principle, ...
The proof here is based on Theorem~\ref{dno}. 
Section 4 offers a further simplification of arguments, done via a method of suitable models from logic.
Section 5 discusses using rich families versus suitable models. We prove that both methods are in a
sense equivalent in the Asplund spaces and in the spaces 
$C(K)$ of continuous functions
where $K$ is any zero-dimensional compact space; see Theorems \ref{t:CompactRichModely} and \ref{t:richIffModel}. This gives a partial positive answer to \cite[Question 2.8]{ck} and \cite[Question 3.6]{ck}.

\section {Rich familes in Asplund spaces}

 \rm Let $P$ be a set and let $\prec$ be a {\it partial order} on it, i.e. $\prec$ is a subset of 
$P\times P$ which is reflexive, symmetric and transitive, see \cite[page 21]{e}. We agree that, instead of
``$s,t\in\prec$'' we rather write ``$s\prec t$''.
Assume moreover that $P$ is {\it (up)-directed by} $\prec$, i.e., for every $t_1,t_2\in P$ there is
$t_3\in P$ such that $t_1\prec t_3$ and $t_2\prec t_3$. 
A subset $R\subset P$ is called {\it cofinal}/{\it dominating}\,
if for every $t\in P$ there is $r\in P$ such that $t\prec r$. $R$ is called
$\sigma$-{\it complete}/{\it closed} if, whenever $r_1\prec r_2\prec\cdots$ is an increasing sequence in $R$,
then there is $r\in R$ such that $r_i\prec r$ for every $i\in\N$ and $r\prec t$ whenever $t\in P$ and $r_i\prec t$ for every $i\in\N$.
The set $R\subset P$ is called {\it rich}/{\it a club set} if it is both cofinal and $\sigma$-complete.
(Note that the whole $P$ is rich if it is $\sigma$-complete.) 

Now, we are ready to provide concrete examples of the poset $(P,\prec)$ that emerge in the framework of Banach spaces.
Let $Z$ be a (rather non-separable) Banach space. By $\SS(Z)$
we denote the family of all separable closed subspaces of $Z$ and we endow it by the partial order
``$\subset$''. Thus, we can consider \it rich families \rm in the poset $(\SS(Z),\subset)$. 
We then also simply say that they are \it rich in \rm $Z$. This example of rich family was first articulated
in a paper \cite{bm} by J.M. Borwein and J. Zhu.
The power of rich families is demonstrated by the following fundamental fact (see \cite{bm} and 
also \cite[page 37]{lpt}).

\begin{proposition}\label{bm} 
The intersection of two (even of countably many) rich families of a given Banach space is
(not only non-empty but again even) rich.
\end{proposition}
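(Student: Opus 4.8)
The plan is to prove the (formally stronger) countable version: if $R_1,R_2,\dots$ are rich in $Z$, then $\RR:=\bigcap_{n\in\N}R_n$ is rich; the case of two families follows by setting $R_1=R_3=\cdots$ and $R_2=R_4=\cdots$, and non-emptiness will come for free from cofinality since $\{0\}\in\SS(Z)$. Throughout I would exploit two concrete features of the poset $(\SS(Z),\subset)$: for any increasing sequence $V_1\subset V_2\subset\cdots$ in $\SS(Z)$, the space $\overline{\bigcup_i V_i}$ is again separable and closed, it dominates every $V_i$, and it is contained in every closed subspace containing all the $V_i$; in particular it is the \emph{unique} least upper bound of $(V_i)$ in $(\SS(Z),\subset)$.

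First I would verify that $\RR$ is $\sigma$-complete. Given an increasing sequence $(V_i)$ in $\RR$, put $V:=\overline{\bigcup_i V_i}$. For each fixed $n$, $\sigma$-completeness of $R_n$ furnishes a least upper bound of $(V_i)$ lying in $R_n$; by the uniqueness noted above this upper bound is $V$, so $V\in R_n$. As $n$ is arbitrary, $V\in\RR$, and $V$ is the required least upper bound in $\RR$.

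The substantive step is cofinality of $\RR$. Fix $t\in\SS(Z)$ and fix a surjection $\nu:\N\to\N$ with $\nu^{-1}(n)$ infinite for every $n$. Build inductively an increasing sequence $t=V_0\subset V_1\subset V_2\subset\cdots$ in $\SS(Z)$ by choosing, using cofinality of $R_{\nu(j)}$, some $V_j\in R_{\nu(j)}$ with $V_{j-1}\subset V_j$. Set $V:=\overline{\bigcup_j V_j}\in\SS(Z)$, which contains $t$. To see $V\in R_n$ for a fixed $n$, enumerate $\nu^{-1}(n)$ increasingly as $j_1<j_2<\cdots$; then $(V_{j_i})_i$ is an increasing sequence in $R_n$, and since the $j_i$ are unbounded and the $V_j$ are nested we get $\overline{\bigcup_i V_{j_i}}=\overline{\bigcup_j V_j}=V$. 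By $\sigma$-completeness of $R_n$ together with uniqueness of least upper bounds, $V\in R_n$; hence $V\in\RR$, and $\RR$ is cofinal. Combined with the previous paragraph, $\RR$ is rich, and in particular non-empty.

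I expect no serious obstacle: the only place needing care is the bookkeeping in the cofinality argument, namely arranging that each $R_n$ is ``used infinitely often'' along the construction so that the common limit $V$ is simultaneously pinned inside every $R_n$. This is the standard diagonal/back-and-forth device, and the rest is the routine observation that closures of countable unions of separable subspaces stay in $\SS(Z)$.
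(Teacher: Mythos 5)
Your proof is correct; the paper itself gives no proof of this proposition, merely citing \cite{bm} and \cite[page 37]{lpt}, and your argument (uniqueness of least upper bounds in $(\SS(Z),\subset)$ for $\sigma$-completeness, plus the diagonal construction interleaving all the families for cofinality) is exactly the standard one from those references. The only cosmetic point is to make sure the index $0$ (where $V_0=t$ need not lie in any $R_n$) is excluded from the fibers $\nu^{-1}(n)$, which your setup already permits.
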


Let $k\in\N$ be greater that $1$, and let $X_1,\ldots, X_k$ be Banach spaces. 
By a \it block \rm we understand any product $Y_1\times\cdots\times Y_k$.
The symbol $\SS_{{\oo}^{\!\!\!\!\!\oo}}(X_1\times\cdots\times X_k)$ will denote the (rich)
family of all blocks $Y_1\times\cdots\times Y_k$ such that $Y_1\in\SS(X_1), \ \ldots,\ 
Y_k\in\SS(X_k)$. Any subset of $\SS_{{\oo}^{\!\!\!\!\!\oo}}(X_1\times\cdots\times X_k)$
will be called a \it block-family \rm in $\SS(X_1\times\cdots\times X_k)$ or just in 
$X_1\times\cdots\times X_k$. If $k=2$, we speak about \it rectangles \rm and \it rectangle-families \rm and
we write $\SS_{\oo}(X_1\!\times\! X_2)$ instead of $\SS_{{\oo}^{\!\!\!\!\!\oo}}(X_1\!\times\! X_2)$.  For a Banach space $X$, with dual $X^*$, we will frequently
work with rectangle-families in $\SS_{\oo}(X\times X^*)$. 

We conclude by one warning. If $\RR$ is a rich rectangle-family in $\SS_{\oo}(X\times X^*)$
, we do not know if, the ``projection'' of it on, say, the second coordinate, that is, the family
$\big\{Y:\ V\times Y\in\RR$ for some $V\in \SS(X)\big\}$, is rich in $\SS(X^*)$. 
Fortunately, in one important case, the ``projection'' of $\RR$ to the first coordinate is again rich;
see Theorems~\ref{dno} a~\ref{nova} below.

Let $X$ be a Banach space. If $A\subset X$, the symbols $\overline{\rm sp}\,A$ 
and ${\rm sp}_\Q A$ mean the closed linear span of $A$ and the set consisting of all finite linear combinations of
elements in $A$ with rational coefficients, respectively. For $ A\subset X$ and $B\subset X^*$ we put
$
B|_A:= \big\{x^*{}|_A:\ x^*\in B\big\};
$
hence, if $A$ is a subspace of $X$, then $B|_A$ is a subset of the dual space $A^*$.
Let $\CC(X)$ and $\CC(X^*)$ denote the families of all countable subsets of $X$ and $X^*$ respectively.

\rm A Banach space is called \it Asplund \rm if every convex continuous function on it is Fr\'echet differentiable
at a point (equivalently, at the points of a dense set, yet equivalently, at the points of a dense $G_\delta$ set).
An important, and widely used, equivalent condition for the Asplund property of a Banach space is
that \sl every separable subspace of it has separable dual, \rm see \cite[Theorem 2.34]{ph}. 

Next, we introduce a concept which serves as a link between $X$ and $X^*$ (and exists right
if and only if $X$ is Asplund).

\begin{definition}\label{generator}
\rm By an \it Asplund generator
in a Banach space $X$ we understand any correspondence 
$G:\CC(X)\longrightarrow\CC(X^*)$ such that
\smallskip

\noindent (a) $\big(\overline{\rm sp}\,C\big)^*\subset \overline {G(C)|_{\overline{\rm sp}\,C}}\ $ for every $C\in\CC(X)$;

\smallskip
\noindent (b) if $C_1,\ C_2,\ \ldots$ is an increasing sequence in $\CC(X)$, then
$G(C_1\cup C_2\cup\cdots)=G(C_1)\cup G(C_2)\cup\cdots$; 

\smallskip
\noindent (c) $\bigcup\{G(C):\ C\in\CC(X)\}$ is a dense subset in $X^*$; and\smallskip

\noindent (d) if $C_1, C_2\in\CC(X)$ are such that $\overline{\rm sp}\, C_1=\overline{\rm sp}\, C_2$,
then $\overline{\rm sp}\, G(C_1)=\overline{\rm sp}\, G(C_2)$.

\end{definition}

Now, consider a Banach space $(X,\|\cdot\|)$, not being Hilbert. Let $V$ be a (closed) subspace of it.
We focus on a question whether the dual $V^*$ is (linearly isometric to) a subspace of $X^*$. 
(If there were so, then some arguments in the next sections would become quite easy.) There do exist
situations when this is so. For instance, if $X$ is $c_0(\Gamma)$ or $\ell_p(\Gamma),\ 1\le p<+\infty$, and
$V$ is $c_0(N)$ or $\ell_p(N)$, where $N\subset \Gamma$. However, 
we are afraid that for general $X$ and $V$ this may not be so.
One way how to look for well behaving $V$'s is to consider Hahn-Banach extensions of elements of $V^*$:
For every $v^*\in V^*$ there are $x^*\in X^*$ such that $x^*{}|_V=v^*$ and $\|v^*\|=\|x^*\|$.
Such an assignment is usually multivalued. Moreover, there is no guarantee that there
exists a selection of the assignment which would be linear. Fortunately, and this is the content of the next structural theorem:
If $X$ is Asplund, there are \tt plenty \rm of very well behaving $V$'s.
We think that this statement actually elucidates what the Asplund property of a Banach space is. The proof of it
gathers together ideas from several papers ranging over half a century: 
\cite{cf, st, fg, g, jz, t, l} (quoted in reverse chronological order).

\begin{theorem}\label{dno} {\rm (Main)} Let $(X,\nn)$ be a (rather non-separable) Banach space. Then 
the following assertions are mutually equivalent.

\item{(i)} $X$ is an Asplund space.

\item{(ii)} $X$ admits an Asplund generator.

\item{(iii)} There exists a rich rectangle-family $\mathcal A\subset \mathcal S_{\oo}(X\times X^*)$ such that
$Y_1\subset Y_2$ whenever $V_1\times Y_1,\ V_2\times Y_2$ are in $\AA$ and $V_1\subset V_2$, and
for every $V\times Y\in\AA$ the assignment
$
Y\ni x^*\longmapsto x^*{}|_{V}\in V^*
$
is a surjective isometry.

\item{(iv)} There exists a  cofinal rectangle-family $\mathcal A\subset \mathcal S_{\oo}(X\times X^*)$ such that
for every $V\times Y\in\AA$ the assignment $Y\ni x^*\longmapsto x^*{}|_{V}\in V^*$
is a surjection.

\end{theorem}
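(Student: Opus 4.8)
\medskip\noindent\emph{Proof plan.}
I would prove the cycle of implications (ii)$\Rightarrow$(iii)$\Rightarrow$(iv)$\Rightarrow$(i)$\Rightarrow$(ii). Two of the links are immediate. For (iii)$\Rightarrow$(iv): a rich family is in particular cofinal and a surjective isometry is in particular a surjection, so the family furnished by (iii) already witnesses (iv). For (iv)$\Rightarrow$(i): given any $W\in\SS(X)$, apply cofinality to the rectangle $W\times\{0\}$ to obtain $V\times Y\in\AA$ with $W\subset V$; then $V^*=\{x^*{}|_V:x^*\in Y\}$ is a continuous linear image of the separable space $Y$, hence separable, and the restriction $V^*\to W^*$ is onto by Hahn--Banach, so $W^*$ is separable too. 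Thus every separable subspace of $X$ has separable dual, i.e.\ $X$ is Asplund by \cite[Theorem~2.34]{ph}.

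\smallskip
The core is (ii)$\Rightarrow$(iii). Fix an Asplund generator $G$. From the axioms one first records that $G$ is monotone (if $C_1\subset C_2$ then $G(C_1)\subset G(C_2)$, by (b)) and that, by (a), the restriction $R_V\colon\overline{\rm sp}\,G(C)\to V^*$ with $V:=\overline{\rm sp}\,C$ always has dense range. The key observation is that $R_V$ is then automatically \emph{onto} as soon as it is an isometry into $V^*$, since an isometric image of a Banach space is closed, hence equals its closure $V^*$. So it is enough to force every $y^*\in\overline{\rm sp}\,G(C)$ to almost attain its norm on $V$. Fixing a well-ordering of $X$, define a saturation operator $\Phi\colon\CC(X)\to\CC(X)$ with $C\subset\Phi(C)$ which adjoins to $C$, for each $y^*$ in a canonical countable dense subset of $\overline{\rm sp}\,G(C)$ and each $k\in\N$, the first $x\in X$ with $\|x\|\le1$ and $y^*(x)>\|y^*\|-\tfrac1k$. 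Axiom (b) is precisely what makes $\Phi$ commute with countable increasing unions, so iterating $\Phi$ shows that $\mathcal F:=\{C\in\CC(X):\Phi(C)=C\}$ is rich in $(\CC(X),\subset)$. Then
\[
\AA:=\bigl\{\overline{\rm sp}\,C\times\overline{\rm sp}\,G(C):C\in\mathcal F\bigr\}
\]
is a rich rectangle-family in $\SS_{\oo}(X\times X^*)$ — cofinality and $\sigma$-completeness pass from $\mathcal F$ to $\AA$, using (b) and (d) to recognize the images of the relevant unions — and by construction $R_V$ is a surjective isometry for every $V\times Y\in\AA$. Finally the monotonicity clause of (iii) drops out of (d): if $\overline{\rm sp}\,C_1\subset\overline{\rm sp}\,C_2$ then $\overline{\rm sp}(C_1\cup C_2)=\overline{\rm sp}\,C_2$, so $\overline{\rm sp}\,G(C_1\cup C_2)=\overline{\rm sp}\,G(C_2)$ by (d), while $\overline{\rm sp}\,G(C_1)\subset\overline{\rm sp}\,G(C_1\cup C_2)$ by monotonicity of $G$.

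\smallskip
The deepest implication is (i)$\Rightarrow$(ii), which actually builds the generator and is where the results accumulated in \cite{cf,st,fg,g,jz,t,l} enter: an Asplund space carries a (shrinking) projectional resolution of the identity $(P_\alpha)_\alpha$, and one induces on the density character of $X$, handling separable pieces directly (a separable subspace has separable dual, so a canonical countable norm-dense subset of the dual can be lifted by Hahn--Banach) and smaller non-separable pieces by the inductive hypothesis, gluing them through the adjoints $P_\alpha^*$; density of $\bigcup_\alpha P_\alpha^*X^*$ in $X^*$ yields (c), and compatibility of $(P_\alpha)$ with countable suprema and with closed linear spans yields (b) and (d). I expect the two genuine obstacles to be: in (ii)$\Rightarrow$(iii), choosing the countable norming data \emph{canonically} in a way that really commutes with countable increasing unions (a delicate use of the well-orderings — precisely the bookkeeping that Section~4 will streamline by passing to a suitable model); and in (i)$\Rightarrow$(ii), propagating axioms (b), (c) and (d) simultaneously through the limit stages of the transfinite induction.
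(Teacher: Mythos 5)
Your overall architecture (the cycle (ii)$\Rightarrow$(iii)$\Rightarrow$(iv)$\Rightarrow$(i)$\Rightarrow$(ii)) and your treatment of three of the four links coincide with the paper's. In particular, for (ii)$\Rightarrow$(iii) your two key points are exactly the ones the paper uses: density of the range of the restriction map (from axiom (a)) plus an isometric estimate forces surjectivity, and the isometric estimate is arranged by iteratively adjoining almost-norming vectors for a countable dense subset of $\overline{\rm sp}\,G(C)$, with axiom (b) making the saturation commute with countable increasing unions; your derivation of the monotonicity clause from (b) and (d) is also the paper's. Two small repairs are needed in that step: cofinality of $\AA$ in the \emph{second} coordinate requires axiom (c) (to first enlarge $C$ so that $\overline{\rm sp}\,G(C)\supset Y_0$), which your plan does not invoke; and defining $\AA$ as the image of the fixed-point family $\mathcal F$ makes $\sigma$-completeness awkward, because an increasing sequence of rectangles in $\AA$ has witnesses $C_i\in\mathcal F$ that need not be increasing, and the increasing replacements $C_1\cup\cdots\cup C_i$ need not be fixed points of your $\Phi$. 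The paper avoids this by defining $\AA$ directly as the set of rectangles $\overline{\rm sp}\,C\times\overline{\rm sp}\,G(C)$ on which the restriction is a surjective isometry, and verifying the surjective-isometry property of the union rectangle by hand (isometry on each $Y_i$ plus density of $\bigcup Y_i$). Your argument for (iv)$\Rightarrow$(i) is the paper's verbatim.

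The genuine gap is (i)$\Rightarrow$(ii). Your proposal to build the generator by transfinite induction along a projectional resolution of the identity is left as a program, and the obstacle you yourself flag---propagating (b), (c), (d) through limit stages---is not a bookkeeping nuisance but the actual mathematical content: the difficulty is that a countable $C$ need not sit inside any single piece $R_\alpha(X)$ of the decomposition, and Hahn--Banach liftings of a dense sequence in $(\overline{\rm sp}\,C)^*$ are non-canonical, so it is unclear how to make $C\mapsto G(C)$ depend only on $C$, commute with countable increasing unions, and depend only on $\overline{\rm sp}\,C$ up to closed span. The paper resolves this by using a construction that is canonical and local from the outset: in the presence of a Fr\'echet smooth norm (or a smooth function $f$ with $f'(V)|_V$ dense in $V^*$ for every subspace $V$) one simply sets $G(C):=f'({\rm sp}_\Q\,C)$, for which (a)--(d) are essentially automatic; in the general Asplund case it invokes Stegall-type results (Propositions 1 and 2 of the cited C\'uth--Fabian paper), which produce precisely such a local, monotone correspondence $\CC(X)\to\CC(X^*)$ with dense restrictions. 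So for this implication your route is genuinely different from the paper's, and as written it does not close; if you want to keep the PRI approach you would in effect have to reprove the Fabian--Godefroy/Stegall machinery in a form that yields the locality and union-compatibility of $G$.
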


\begin{proof} (i)$\Longrightarrow$(ii). In order not to get lost in the case of general Asplund space, 
assume first that the norm $\nn$ on $X$ is Fr\'echet smooth, or more generally, 
that there exists a smooth function $f:X\rightarrow\R$, with continuous derivative $f'$ 
such that $f'(V)|_V$ is dense in $V^*$ for every subspace $V$ of $X$; note that
this easily implies that $X$ is Asplund. Define then $G:\CC(X)\longrightarrow\CC(X^*)$ by
$$
\CC(X)\ni C\longmapsto f'\big(\,{\rm sp}_\Q C\big)=:G(C)\in\CC(X^*).
$$
It remains to verify the properties (a), (b), (c), and (d) in Definition~\ref{generator}. As regards (a), 
fix any $C\in\CC(X)$ and any non-zero $v^*$ in $(\,\overline{\rm sp}\, C)^*$. 
Let any $\ee>0$ be given. The properties of $f$ provide a 
$v\in {\rm sp}_\Q\, C$ such that $\big\|v^*-f'(v)|_{\overline {\rm sp}\,C}\big\|<\ee $. But 
$f'(v)$ belongs to $G(C)$. And, as $\ee>0$ was arbitrary, we get that $v^*$ belongs 
$\overline{G(C)|_{\,\overline {\rm sp}\,C}}\,$. Thus (a) is verified.
 As regards (b), let $C_1, C_2,\ldots$ be as in the premise. Because our Asplund generator $G$ is ``monotone'', it is enough to prove the
inclusion ``$\subset$''. So, pick any $x^*$ in $G(C_1\cup C_2\cup\cdots)$. Since $C_1\subset C_2\subset\cdots$, there is
$m\in\N$ so big that $x^*$ belongs to $G(C_m)$. We thus verified (b).
The claim (c) follows immediately from the fact that $f'(X)$ is dense in $X^*$ and from the
definition of $G$. The last property (d) is guaranteed by the continuity of $f'$.

If we are facing a general Asplund space (and we do not have at hand the function $f$ as above),
we have to work harder. Either, we use \cite[Propositions 1 and 2]{cf}, based on Ch. Stegall's ideas (and
proved without use of Simons' lemma),
or we exploit an information from \cite{fg} (where Simons' lemma is needed!); see also \cite[Remark 2]{cf}.
More concretely, using symbols $\mathcal L$ and $\Lambda$ from \cite{cf}, define the Asplund generator $G:\CC(X)\longrightarrow\CC(X^*)$ by
$$
\CC(X)\ni C\longmapsto \Lambda\big(\mathcal L\big({\rm sp}_\Q\, C\cap B_X\big)\big)=:G(C)\in \CC(X^*).
$$
Now (a) in Definition~\ref{generator} follows from \cite[Proposition 1]{cf} and the proof of it.
(We actually get a stronger inclusion that $\big(\overline{\rm sp}\,C\big)^*\subset \overline {G(C)}\,|\,_{\overline{\rm sp}\,C}$.)
(b) follows immediately from the very definition of our $G$, the definition of $\Lambda,\ \mathcal L$, and 
from the monotonicity of the sequence $C_1, C_2,\ldots$ (c) follows immediately from \cite[Proposition 1]{cf}.
(d) follows easily from the properties of $\mathcal L$ and $\Lambda$,
and from the definition of $G$. We thus proved that (ii) holds in a general Asplund space.
\smallskip

(ii)$\Longrightarrow$(iii). Let $G:\CC(X)\longrightarrow\CC(X^*)$ be a generator in $X$. Define $\AA\subset\SS_{\oo}(X\times X^*)$
as the family consisting of all rectangles $\overline{\rm sp}\, C\times \overline{\rm sp}\, G(C)$,
with $C\in\CC(X)$, such that the assignment
\begin{equation}\label{si}
\overline{\rm sp}\, G(C)\ni x^*\longmapsto x^*{}|\,_{\overline{\rm sp}\, C}\in(\overline{\rm sp}\, C)^* 
\end{equation}
is a surjective isometry. We shall show that $\AA$ is a rich family. 

As regards the cofinality of $\AA$, fix any $V\times Y\in \SS_{\oo}(X\times X^*)$. 
Since $G$ is an Asplund generator, the condition (c) guarantees that there is $C_0\in\CC(X)$ so big that
$\overline{C_0}\supset V$ and $\overline{G(C_0)}\supset Y$. Assume that for some $m\in\N$
we already found countable sets $C_0\subset C_1\subset\cdots\subset C_{m-1}\subset X$.
Realizing that ${\rm sp}_\Q\, G(C_{m-1})$ is countable, we find $C_m\in\SS(X)$ so big that
$C_m\supset C_{m-1}$ and that $\|x^*\|=\sup\,\langle x^*,C_m\cap B_X\rangle$ for every $x^*\in{\rm sp}_\Q\, G(C_{m-1})$. Do so for every $m\in\N$ and put finally $C:=C_0\cup C_1\cup\cdots$. Clearly
$C\in\CC(X)$ and also $\overline{\rm sp}\,C\times\overline{\rm sp}\,G(C)\supset V\times Y$.
It remains to show that the assignment (\ref{si}), with our just constructed $C$, is a
surjective isometry. 

Take any $x^*\in{\rm sp}_\Q\, G(C)$.
Since $x^*$ is a linear combination of finitely many elements from $G(C)$ and that
$C_0\subset C_1\subset\cdots$, the property (b) of $G$ provides an $m\in\N$ so big that
$x^*$ belongs to ${\rm sp}_\Q\, G(C_{m-1})$. But then, from the construction above,
$$
\|x^*\|=\sup\,\langle x^*,C_m\cap B_X\rangle \le \sup \langle x^*,\overline{\rm sp}\, C\cap B_X\rangle
=\big\|x^*|\,_{\overline{\rm sp}\, C}\big\| \le \|x^*\|.
$$
And, as $\overline{\rm sp}\, G(C)=\overline{{\rm sp}_\Q\, G(C)}$, we get that $\big\|x^*|\,_{\overline{\rm sp}\,C}\big\|=\|x^*\|$
for every $x^*\in \overline{\rm sp}\, G(C)$. We proved that the assignment (\ref{si}) with our $C$ is isometrical.

Now, fix any $v^*\in(\overline{\rm sp}\,C)^*$. By (a) from Definition~\ref{si}, 
there is a sequence $(x^*_n)$ in $G(C)$ so that $\big\|v^*-x^*_n|\,_{\overline{\rm sp}\,C}\big\|\longrightarrow0$
as $n\to\infty$. By the isometric property of (\ref{si}) just proved, we have that $\|x^*_i-x^*_j\|=
\big\|x^*_i|\,_{\overline{\rm sp}\, C}-x^*_j|\,_{\overline{\rm sp}\, C}\big\|\longrightarrow0$ as $i,j\to\infty$.
Put $x^*:=\lim_{n\to\infty}x^*_n$; then $x^*\in\overline{G(C)}\subset \overline{\rm sp}\,G(C)$ and
$v^*=x^*|\,_{\overline{\rm sp}\, C}$. This shows the surjectivity of the assignment (\ref{si})
with our $C$. 
This way, we proved that $\overline{\rm sp}\,  C\times \overline{\rm sp}\,  G(C)$
belongs to $\AA$, and hence, the family $\AA$ is cofinal.

For checking the $\sigma$-completeness of $\AA$, consider any increasing sequence 
$V_1\times Y_1,\ V_2\times Y_2,\ \ldots$ of elements in $\AA$. Then, clearly,
$\overline{V_1\times Y_1\cup V_2\times Y_2\cup\cdots}\,$ is of form $V\times Y$
and this is an element of $\SS_{\oo}(X\times X^*)$. Also, clearly,
$V=\overline{V_1\cup V_2\cup\cdots}$ and $Y=\overline{Y_1\cup Y_2\cup\cdots}\,$. From the definition
of $\AA$, for every $i\in\N$ find $C_i\in\CC(X)$ such that $V_i=\overline{\rm sp}\, C_i$
and $Y_i=\overline{\rm sp}\, G(C_i)$. Put $C:=C_1\cup C_2\cup\cdots$; then $C\in\CC(X)$. Since $V_1\subset V_2\subset\cdots$ and $Y_1\subset Y_2\subset\cdots$, 
some rather boring reasoning, profiting from the properties (b) and (d) of $G$ in Definition~\ref{si},
guarantees that $V=\overline{\rm sp}\, C$ and $Y=\overline{\rm sp}\, G(C)$. (Hint: Replace the sequence
$C_1, C_2, \ldots$ by the increasing one $C_1,\ C_1\cup C_2,\ C_1\cup C_2\cup C_3,\ \ldots$)
Hence, by (a), $V^*\subset\overline{Y|_V}$. 

Now, we will verify that the assignment $Y\ni x^*\longmapsto x^*|_V\in V^*$ is a surjective isometry. 
As regards the isometric property, we recall that for every $i\in\N$ the rectangle $V_i\times Y_i$ belongs to
$\AA$, and so for every $x^*\in Y_i$ we have
$$
\|x^*\|=\big\|x^*|_{V_i}\big\| \le \big\|x^*|_V\big\| \le \|x^*\|.
$$
It then follows, using the density of $Y_1\cup Y_2\cup\cdots$ in $Y$, that $\|x^*\|=\big\|x^*|_V\big\|$
for every $x^*\in Y$. 
Now, once having the information just proved, we have that $\overline{Y|_V}=Y|_V\ (\subset V^*)$, and hence
$V^*=Y|_V$. Therefore, summarizing all the above, 
we are sure that our $\AA$ is a rich family.

Finally, consider any $V_1\times Y_1,\ V_2\times Y_2$ in $\AA$
such that $V_1\subset V_2$. From the very definition of $\AA$ we find $C_1, C_2\in\CC(X)$ such that
$\overline{\rm sp}\, C_1= V_1$ and $\overline{\rm sp}\, C_2= V_2$. Then
$$
C_2\subset C_1\cup C_2\subset \overline{\rm sp}\,  C_1\cup \overline{\rm sp}\,  C_2 = V_1\cup V_2=V_2 = \overline{\rm sp}\,  C_2,
$$
and so $\overline{\rm sp}\, C_2\subset \overline{\rm sp}\, (C_1\cup C_2)\subset \overline{\rm sp}\,  C_2$.
Now (d) in Definition ~\ref{generator} gives that $\overline{\rm sp}\, G(C_2)=\overline{\rm sp}\, G(C_1\cup C_2)$, and so
$$
Y_2=\overline{\rm sp}\, G(C_1\cup C_2) \stackrel{(b)}= \overline{\rm sp}\, \big(G(C_1)\cup G(C_1\cup C_2)\big)
\supset \overline{\rm sp}\, G(C_1)=Y_1.
$$
We completely proved (iii).

\smallskip
(iii)$\Longrightarrow$(iv) is trivial.
\smallskip

(iv)$\Longrightarrow$(i). Assume (iv) holds. Let $Z\in\SS(X)$ be arbitrary. From the cofinality of $\AA$,
find $V\times Y\in\AA$ such that $V\times Y\supset Z\times\{0\}$. Then $V^*$, being the image of $Y\,(\in\SS(X^*))$, 
is itself separable. It then follows that $Z^*$, the quotient of $V^*$, must be also separable. Now it remains to use the
aforementioned characterization of the Asplund property, and thus (i) follows.
\end{proof}

\begin{remark}\label{2.4}
\rm Assume that the norm $\nn$ on $X$ is Fr\'echet smooth and define $f:=\|\cdot\|^2$.
Then for every subspace $V\subset X$ we get that $V^*\subset \overline{f'(V)|_V}$ but not
$V^*\subset\overline{f'(V)}\,|_V$. Indeed, this stronger inclusion seems to be a privilege of only some $V$'s;
we can find such subspaces by playing a suitable ``volleyball'' with countably many moves, see the proof of
(ii)$\Rightarrow$(iii) above.
(Fortunately, these ``selected/better'' $V$'s form a rich family in $\SS(X)$.) From this, and from the proof of implication (i)$\Rightarrow$(ii) above,
it follows that using the Stegall's approach here is somehow stronger and simpler, see \cite[Proposition 1]{cf}. Likewise, the Stegall's approach is stronger and simpler than that from \cite{fg}, see \cite[Remark 2]{cf}.
\end{remark}

It can be useful to extend Theorem~\ref{dno} to the following statement.

\begin{theorem}\label{dnoo}
Let $(Z,\nn)$ be a Banach space, $(X,\nn)$ an Asplund space, $T: Z\rightarrow X$ a bounded linear operator,
and let $z^*\in Z^*$ be given.
Then there exists a rich block-family $\AA_T$ in $Z\times X\times X^*$ such that 
$Y_1\subset Y_2$ whenever $U_1\!\times\! V_1\!\times\! Y_1,\ U_2\!\times\! V_2\!\times\! Y_2\in{\mathcal A}_T$ and
$V_1 \subset V_2$, and that
for every $U\!\times\! V\!\times\! Y$ in $\AA_T$ we have $T(U)\subset V$, the restricion assignment
$Y\ni x^*\longmapsto x^*|_V\in V^*$ is a surjective isometry, and $\|T^*x^*-z^*\|=\big\|(T|_U)^*(x^*|_V)-(z^*|_U)\big\|$
for every $x^*\in Y$.
\end{theorem}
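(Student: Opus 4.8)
The plan is to deduce Theorem~\ref{dnoo} from Theorem~\ref{dno}, reusing the ``volleyball'' construction from the proof of (ii)$\Rightarrow$(iii). Fix a rich rectangle-family $\AA\subset\SS_\oo(X\times X^*)$ as provided by Theorem~\ref{dno}(iii). The first observation is pure bookkeeping: whenever $U\in\SS(Z)$, $V\in\SS(X)$, $Y\in\SS(X^*)$ with $T(U)\subset V$, then for every $x^*\in Y$ and every $u\in U$ we have $\la(T|_U)^*(x^*|_V),u\ra=\la x^*,Tu\ra=\la T^*x^*,u\ra$, i.e. $(T|_U)^*(x^*|_V)=(T^*x^*)|_U$; hence the equality required in the theorem is exactly the statement that $U$ \emph{norms} the functional $T^*x^*-z^*\in Z^*$, namely $\big\|(T^*x^*-z^*)|_U\big\|=\|T^*x^*-z^*\|$, the inequality $\le$ being automatic. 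Accordingly I define $\AA_T$ to be the family of all blocks $U\times V\times Y$ with $U\in\SS(Z)$, $V\times Y\in\AA$, $T(U)\subset V$, and such that $U$ norms $T^*x^*-z^*$ for every $x^*\in Y$. With this definition the monotonicity clause is immediate: if $U_1\times V_1\times Y_1,\ U_2\times V_2\times Y_2\in\AA_T$ and $V_1\subset V_2$, then $V_1\times Y_1,\ V_2\times Y_2\in\AA$ and so $Y_1\subset Y_2$ by the corresponding property of $\AA$; likewise the inclusion $T(U)\subset V$, the surjective isometry, and the norming equality hold by construction for each member of $\AA_T$.

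It remains to check that $\AA_T$ is rich. For cofinality, start from an arbitrary block $U_0\times V_0\times Y_0$ and build increasing sequences $U_0\subset U_1\subset\cdots$ in $\SS(Z)$ and $V_0'\times Y_0'\subset V_1'\times Y_1'\subset\cdots$ in $\AA$ as follows. Having $U_n$, use the cofinality of $\AA$ to pick $V_n'\times Y_n'\in\AA$ dominating the separable rectangle $(V_{n-1}'+T(U_n))\times Y_{n-1}'$ --- where for $n=0$ we read $V_{-1}':=V_0$, $Y_{-1}':=Y_0$, $U_{-1}:=U_0$ --- so that $T(U_n)\subset V_n'$ and $V_{n-1}'\subset V_n'$, $Y_{n-1}'\subset Y_n'$. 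Then choose a countable dense set $\{y^*_{n,k}\}_{k\in\N}$ of $Y_n'$, and for each $k$ pick countably many points of $B_Z$ whose pairings with $T^*y^*_{n,k}-z^*$ have supremum equal to $\|T^*y^*_{n,k}-z^*\|$; let $U_{n+1}$ be the closed span of $U_n$ together with all these points. Finally put $U:=\overline{\rm sp}\,\bigcup_n U_n$, $V:=\overline{\rm sp}\,\bigcup_n V_n'$, $Y:=\overline{\rm sp}\,\bigcup_n Y_n'$. Then $V\times Y\in\AA$ by $\sigma$-completeness of $\AA$; $T(U)\subset V$ because $T$ is continuous and $T(\bigcup_n U_n)\subset\bigcup_n V_n'$; and the restriction $Y\ni x^*\mapsto x^*|_V$ is a surjective isometry by Theorem~\ref{dno}(iii). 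For the norming property, any $x^*\in\bigcup_n Y_n'$ lies in some $Y_n'$ and is a limit of points $y^*_{n,k_i}$, each of which is normed by $U_{n+1}\subset U$; since $T^*$ is bounded and both $w^*\mapsto\|w^*\|$ and $w^*\mapsto\|w^*|_U\|$ are continuous on $Z^*$, passing to the limit gives $\|(T^*x^*-z^*)|_U\|=\|T^*x^*-z^*\|$, and then density of $\bigcup_n Y_n'$ in $Y$ yields this for every $x^*\in Y$. Hence $U\times V\times Y\in\AA_T$ and it dominates $U_0\times V_0\times Y_0$.

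For $\sigma$-completeness, let $U_n\times V_n\times Y_n$ be an increasing sequence in $\AA_T$ and set $U:=\overline{\rm sp}\,\bigcup_n U_n$, $V:=\overline{\rm sp}\,\bigcup_n V_n$, $Y:=\overline{\rm sp}\,\bigcup_n Y_n$. Then $V\times Y\in\AA$ by $\sigma$-completeness of $\AA$, $T(U)\subset V$ by continuity, the restriction map $Y\to V^*$ is a surjective isometry by Theorem~\ref{dno}(iii), and the norming property passes to $\bigcup_n Y_n$ because each $x^*$ there already lies in some $Y_n$ and is normed by $U_n\subset U$, hence to all of $Y$ by density and continuity. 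Thus $U\times V\times Y\in\AA_T$. I expect the only genuine obstacle to be organizing this three-coordinate interleaved construction so that at the limit all the constraints hold simultaneously: $V\times Y$ must stay in $\AA$ (which forces us to enlarge $V$ and $Y$ only through the cofinality of $\AA$), $T(U)\subset V$, and --- the one new ingredient --- the clause ``$U$ norms $T^*x^*-z^*$'' must survive the passage to the limit; this last point is precisely what the density-plus-continuity argument handles, and it is the reason each stage first enlarges $V_n',Y_n'$ and only afterwards enlarges $U$ to norm the freshly produced batch of functionals.
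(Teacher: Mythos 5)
Your proof is correct and follows essentially the same route as the paper: both define $\AA_T$ by the conditions $T(U)\subset V$, $V\times Y\in\AA$ and the norming identity, and both establish richness by the same interleaved construction (enlarge $U$ by countably many almost-norming points of $B_Z$ for a dense set of $Y$, then enlarge $V\times Y$ through the cofinality of $\AA$, and pass to the limit via density and Lipschitz continuity of both sides in $x^*$). The only differences are cosmetic: the paper first packages the bookkeeping conditions as an intersection $\RR_1\cap\RR_2$ of two rich block-families via Proposition~\ref{bm}, while you handle them directly, and your explicit reformulation $(T|_U)^*(x^*|_V)=(T^*x^*)|_U$ is a small clarifying touch the paper leaves implicit.
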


\begin{proof}
It is easy (and left to a reader) to check that the rectangle-family $\RR_T$ consisting of all $U\times V\in\SS_{\oo}(Z\times X)$
such that $T(U)\subset V$ is rich in $Z\times X$. Denote 
$$
\RR_1:=\big\{U\!\times\! V\!\times\! Y:\ U\!\times V\in\RR_T\ \ {\rm and}\ \ Y\in\SS(X^*)\big\},
$$$$
\RR_2:=\big\{U\!\times\! V\!\times\! Y:\ U\in\SS(Z)\ \ {\rm and}\ \ V\times Y\in\AA\big\}
$$
where $\AA$ is from Theorem~\ref{dno}. Clearly, both these families are rich, and therefore
$\RR:=\RR_1\cap\RR_2$ is a rich block-family in $\SS_{{\oo}^{\!\!\!\!\oo}}(Z\!\times\! X\!\times X^*)$.
Clearly, every triple $U\!\times\! V\!\times\! Y$ in $\RR$ possesses the first two properties from
the conclusion of our theorem. Now, define the family 
$$
\AA_T:=\big\{U\!\times\! V\!\times\! Y\in\RR:\ \|T^*x^*-z^*\|=\big\|(T|_U)^*(x^*|_V)-(z^*|_U)\big\|\ \ \hbox{for every}\ \  x^*\in Y\big\}.
$$
Clearly, $\AA_T$ has all the three required properties. Thus, it remains to check that $\AA_T$ is rich.

As regards the cofinality of $\AA_T$, consider any $M\in\SS(Z\times X\times X^*)$. From the cofinality
of $\RR$, find $U_0\!\times\! V_0\!\times\! Y_0$ in $\RR$ such that $U_0\!\times\! V_0\!\times\! Y_0\supset M$.
We shall construct an increasing sequence $U_m\!\times\! V_m\!\times\! Y_m,\ m\in\N$, in $\RR$ as follows.
Let $m\in\N$ and assume that we have already found $U_{m-1}\!\times\! V_{m-1}\!\times\! Y_{m-1}$. 
Using the separability of $Y_{m-1}$ find $C_{m-1}\subset\CC(Z)$ such that ${\overline {C_{m-1}}}\supset U_{m-1}$ and
$\|T^*x^*-z^*\|=\sup\,\la T^*x^*-z^*,C_{m-1}\cap B_Z\ra$ for every $x^*\in Y_{m-1}$. Find 
$U_m\!\times\! V_m\!\times\! Y_m$ in $\RR$ so big that it contains $(U_{m-1}\cup C_{m-1})\!\times\! V_{m-1}\!\times\! Y_{m-1}$. 
Doing so for every $m\in\N$,
put finally $U:=\overline{\bigcup U_m}\,, \
V:=\overline{\bigcup V_m}\,$, and $Y:=\overline{\bigcup Y_m}\,$. Clearly, $U\!\times\! V\!\times\! Y=
\overline{\bigcup U_m\!\times\! V_m\!\times\! Y_m}\, \supset M$.
The $\sigma$-completeness of $\RR$ guarantees that $U\!\times\! V\!\times\! Y$ lies in $\RR$.
Now fix any $m\in\N$ and any $x^*\in Y_{m-1}$. We can estimate
\begin{eqnarray*}
\|T^*x^*-z^*\|&=&\sup\big\la    T^*x^*-z^*,C_{m-1}\cap B_Z\big\ra \le \sup\big\la T^*x^*-z^*, B_U\big\ra\\
&=&\sup\big\{\big\la x^*|_V,\big(T|_U\big)u\big\ra-\la z^*|_U,u\ra:\ u\in B_U\big\}\\ 
&=&  \big\|(T|_U)^*(x^*|_V)-z^*|_U\big\| \le \|T^*x^*-(z^*)\|\,.
\end{eqnarray*}
Thus $\|T^*x^*-z^*\| = \big\|(T|_U)^*(x^*|_V)-(z^*)|_U\big\|$ for every $x^*$ from $\bigcup Y_m$, and finally, for every 
$x^*$ from $Y$. We verified that $U\times V\times Y\in{\mathcal A}_T$, and hence ${\mathcal A}_T$ is cofinal.

As regards the $\sigma$-completeness of $\AA_T$, consider any increasing sequence $U_1\!\times\! V_1\!\times\! Y_1,\ 
U_2\!\times\! V_2\!\times\! Y_2,\ldots$ in $\AA_T$. Put $U:=\overline{\bigcup U_i}\,, \
V:=\overline{\bigcup V_i}\,$, and $Y:=\overline{\bigcup Y_i}\,$. Clearly, $U\!\times\! V\!\times\! Y=
\overline{\bigcup U_i\!\times\! V_i\!\times\! Y_i}\,$. As $\RR$ was rich, our $U\!\times\! V\!\times\! Y$ belongs to it.
Take any $i\in\N$ and any $x^*\in Y_i$. 
Since $U_i\times V_i\times Y_i\in{\mathcal A}_T$, we have that $\|T^*x^*-z^*\|=
\big\|(T|_{U_i})^*(x^*|_{V_i})-(z^*|_{U_i})\big\|$. But we can easily verify the following monotonicity
$$
\big\|(T|_{U_i})^*(x^*|_{V_i})-(z^*|_{U_i})\big\| \le \big\|(T|_U)^*(x^*|_V)-(z^*|_U)\big\| \le \|T^*x^*-z^*\|.
$$
Thus $\|T^*x^*-z^*\|=\big\|(T|_U)^*(x^*|_V)-(z^*|_U)\big\|$ holds for every $x^*$ from $\bigcup Y_i$, 
and finally for every $x^*$ from $Y$. We proved that $U\times V\times Y$ belongs to $\AA_T$,
and therefore this family is $\sigma$-complete.

\end{proof}

\begin{remark}\label{2.6}
{\em Of course, Theorem~\ref{dnoo} can be easily extended to several spaces $Z_1,\ldots,Z_k$, to $z^*_i\in Z^*_i$, and to operators
$T_i:Z_i\rightarrow X,\ i=1,\ldots,k$.}
\end{remark}

\section{Separable reduction for statements with Fr\'echet subdifferentials}

Let $(X,\|\cdot\|)$ be a Banach space, let $f:X\longrightarrow(-\infty,+\infty]$ be any proper fnction, i.e. $f\not\equiv+\infty$, and let $x\in X$ be a point where $f(x)<+\infty$. The {\em Fr\'echet subdifferential} $\partial_Ff(x)$ of $f$ at $x$
is the (possibly empty) set consisting of all $x^*\in X^*$ such that 
$f(x+h)-f(x)-\la x^*,h\ra > -o(\|h\|)$ for all $0\neq h\in X$ where $o:(0,+\infty)\longrightarrow
[0,+\infty]$ is a suitable function with the property that $\frac{o(t)}t\rightarrow0$ as $t\downarrow0$; or in other words, if for every $\ee>0$ there is $\delta>0$ such that
$\frac1{\|h\|}\big(f(x+h)-f(x)-\la x^*,h\ra\big)> -\ee$ whenever $h\in X$ and $0<\|h\|<\delta$.

The novelty presented below is that, under
the (small) price of \tt restricting to  Asplund spaces, \rm
for separable reductions of statements involving
Fr\'echet subdifferentials, we do not need to translate first these statements into the terms of the primal space $X$
(as it was used to do in the last three decades). 
This is a noticeable simplification when comparing with the so far existing separable reductions; see \cite{fi2}.
In addition we get ``isometric'' statements, which
substantially improve those from \cite{fi2}, and moreover unable 
to simplify other proofs from the quoted paper.

\begin{theorem}\label{nova}
{\rm (Main)} Let $(X,\nn)$ be a (rather non-separable) {\tt Asplund} space and let $f:X\longrightarrow(-\infty,+\infty]$ be any proper function. 
Then there exists a rich rectangle-family $\RR\subset\SS_{\oo}(X\times X^*)$ such that 
$Y_1\subset Y_2$ whenever $V_1\times Y_1,\ V_2\times Y_2\in\RR$ and $V_1\subset V_2$,
with further properties  
that for every $V\times Y\in\RR$ the assignment
$Y\ni x^*\longmapsto x^*|_V\in V^*$ is an {\tt isometry} from $Y|_V$ onto $V^*$ 
and for every $v\in V$ we have that 
$$
\big(\partial_Ff(v)\cap Y\big)|_V=\big(\partial_Ff(v)\big)|_V=\partial_F(f|_V)(v)\,.
$$
\end{theorem}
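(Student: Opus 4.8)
The plan is to build the desired rich family $\RR$ as an intersection of several rich families, exploiting Proposition~\ref{bm}. The backbone is the rich rectangle-family $\AA$ from Theorem~\ref{dno}(iii): every $V\times Y\in\AA$ already has the monotonicity property for the second coordinates and the property that $Y\ni x^*\mapsto x^*|_V\in V^*$ is a surjective isometry. So it suffices to intersect $\AA$ with a rich family $\RR_0$ that forces, for every $V$ appearing as a first coordinate and every $v\in V$, the three subdifferential sets to coincide after restriction to $V$. The inclusions $\big(\partial_Ff(v)\cap Y\big)|_V\subset\big(\partial_Ff(v)\big)|_V\subset\partial_F(f|_V)(v)$ are automatic: restricting a Fr\'echet subgradient of $f$ at $v\in V$ to the subspace $V$ gives a Fr\'echet subgradient of $f|_V$ at $v$, since the defining inequality only needs to be tested on $h\in V$. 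Thus the whole content is the reverse inclusion $\partial_F(f|_V)(v)\subset\big(\partial_Ff(v)\cap Y\big)|_V$, i.e. every Fr\'echet subgradient $v^*$ of $f|_V$ at $v$ must extend to some $x^*\in Y$ which is genuinely a Fr\'echet subgradient of the ambient $f$ at $v$.

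The construction of $\RR_0$ is a standard ``countably many moves'' closing-off argument. Fix a countable dense set $D_V\subset V$ (this can be read off from a generating countable set $C$ with $\overline{\mathrm{sp}}\,C=V$). For each $v\in D_V$, each $v^*$ in a countable dense subset of $\partial_F(f|_V)(v)$ relative to $V^*$, and each $\ee\in\Q^+$, the failure of the ambient Fr\'echet inequality would be witnessed by points $h\in X$ with $\|h\|$ small and $\frac1{\|h\|}\big(f(v+h)-f(v)-\la x^*,h\ra\big)\le-\ee$ for every extension $x^*$; but if $v^*\in\partial_F(f|_V)(v)$, then no such witness lies in $V$. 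The point is that the ambient subgradient property of an extension $x^*$ of $v^*$, restricted to the subspace $V$, is \emph{exactly} the subgradient property of $v^*$ for $f|_V$; so an extension $x^*\in Y$ of $v^*$ is an ambient Fr\'echet subgradient of $f$ at $v$ iff it is a Fr\'echet subgradient ``in the $V$-directions'', which holds by construction, once we have arranged that the relevant $h$'s needed to certify it lie in $V$ — but here is the subtlety: the ambient inequality must hold for all $h\in X$, not just $h\in V$, so this naive reading fails. I would instead argue as follows: put $\RR_0$ to be the family of $V\in\SS(X)$ such that for every $v$ in a fixed countable dense subset of $V$ and every $v^*\in\partial_F(f|_V)(v)$ there is $x^*\in Y\cap\partial_Ff(v)$ with $x^*|_V=v^*$ — but this refers to $Y$, so it is cleaner to work directly with rectangles and define $\RR_0\subset\SS_\oo(X\times X^*)$ so that the needed extensions, which exist by Hahn--Banach-type reasoning once $f|_V$ is Fr\'echet subdifferentiable, can be captured inside $Y$. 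The mechanism that makes the extension automatically an ambient subgradient is that, using the isometry $Y\to V^*$, one shows the relevant ``fuzzy sum'' / approximate extremal argument localizes to $V$; this is precisely where Theorem~\ref{dno} does the heavy lifting, by giving an \emph{isometric} correspondence between $Y$ and $V^*$ so that norms of approximate subgradients computed in $V^*$ equal norms in $X^*$.

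Concretely, I would enumerate the countably many ``tasks'' — pairs $(v,v^*)$ ranging over a countable dense subset of the graph of $\partial_F(f|_V)$ together with rational $\ee$ — and at each finite stage, given a current separable pair $V_{m-1}\times Y_{m-1}$, enlarge $V_{m-1}$ to absorb a countable dense subset of $V_{m-1}$ and enlarge $Y_{m-1}$ to contain, for each already-listed task $(v,v^*)$ and each rational tolerance, an element $x^*\in X^*$ that is a near-optimal Hahn--Banach extension of $v^*$ witnessing that the ambient inequality holds down to that tolerance; take the increasing union, which by $\sigma$-completeness of $\SS_\oo(X\times X^*)$ yields a rectangle in which \emph{every} task of the limit $V$ is dealt with (here one uses that $\partial_F(f|_V)$ is determined by countably many $v$ and that subgradients at a point form a norm-closed convex set, so a countable dense subset suffices, passing to the limit via the isometry of Theorem~\ref{dno}). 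Then $\RR:=\AA\cap\RR_0$ is rich by Proposition~\ref{bm}, inherits the monotonicity of second coordinates and the surjective isometry from $\AA$, and by construction every $v^*\in\partial_F(f|_V)(v)$ lifts to $x^*\in Y\cap\partial_Ff(v)$. The main obstacle I anticipate is exactly the verification that the limiting extension $x^*\in Y$ satisfies the ambient Fr\'echet inequality for \emph{all} $h\in X$ — not merely $h\in V$. This forces the closing-off to track, for each task, not a single extension but a whole sequence of extensions with shrinking error, and to invoke the isometric identification $\|x^*\|_{X^*}=\|x^*|_V\|_{V^*}$ from Theorem~\ref{dno}(iii) to control the behaviour of $f(v+h)-f(v)-\la x^*,h\ra$ in directions $h$ outside $V$; getting this bookkeeping right, so that the union over stages genuinely produces an ambient subgradient rather than just a $V$-subgradient, is the crux of the argument.
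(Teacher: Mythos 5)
There is a genuine gap. You correctly locate the crux --- an extension $x^*\in Y$ of $v^*\in\partial_F(f|_V)(v)$ must satisfy the Fr\'echet inequality for \emph{all} $h\in X$, not just $h\in V$ --- but the mechanism you propose does not resolve it, and you acknowledge as much. Storing ``near-optimal Hahn--Banach extensions'' in the second coordinate $Y$ cannot work: the norm-quality of an extension of $v^*$ says nothing about the behaviour of $f(v+h)-f(v)-\la x^*,h\ra$ in directions $h\notin V$, and worse, at a finite stage $V_{m-1}$ there need not exist \emph{any} extension of a given $v^*\in\partial_F(f|_{V_{m-1}})(v)$ that is an ambient subgradient --- that is exactly what can fail for a generic separable subspace and is only guaranteed for the limit $V$. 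Your enumeration of ``tasks'' over a countable dense subset of the graph of $\partial_F(f|_V)$ is also circular, since $V$ is only determined after the closing-off is complete.

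The missing idea is dual to what you propose: one enriches the \emph{first} coordinate, not the second. For each $x\in X$, $x^*\in X^*$, $r\in\R$ and $0<\dd_1<\dd_2$ set
$I_V(x,x^*,r,\dd_1,\dd_2):=\inf\big\{\frac1{\|h\|}\big(f(x+h)-r-\la x^*,h\ra\big):\ h\in V,\ \dd_1<\|h\|<\dd_2\big\}$,
and define $\RR$ to consist of those $V\times Y\in\AA$ for which $I(x,x^*,r,\dd_1,\dd_2)=I_V(x,x^*,r,\dd_1,\dd_2)$ for all $x\in V$, $x^*\in Y$, $r\in\R$. Cofinality is obtained by inserting into $V_m$ approximate minimizers $h(x,x^*,q,\dd_1,\dd_2,\gg)\in X$ of the \emph{ambient} infimum, for all corteges with $x$ in a countable dense subset of $V_{m-1}$, $x^*$ in a countable dense subset of $Y_{m-1}$, and rational $q,\dd_1,\dd_2,\gg$; a perturbation estimate (which is why the condition must be stated for arbitrary $r\in\R$ and arbitrary $x^*\in Y$, not only for actual subgradients and $r=f(x)$) then transfers the equality of infima from these dense corteges to all corteges in the limit rectangle. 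Once $I=I_V$ holds, the conclusion is immediate: if $v^*\in\partial_F(f|_V)(v)$ then $I_V(v,v^*,f(v),\dd_1,\dd)\ge-\ee$, hence $I(v,x^*,f(v),\dd_1,\dd)\ge-\ee$ for the isometric lift $x^*\in Y$, which is precisely the ambient Fr\'echet inequality. Without this ``the infimum over $X$ is already attained inside $V$'' device, the bookkeeping you defer to cannot be completed.
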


\begin{remark} {\em It is worth to compare the theorem above with what was proved in \cite{fz,f3}: \sl
In a general (possibly non-Asplund) Banach space $X$, there was found a cofinal family  $\mathcal F$ in $\SS(X)$ such that for every $V\in\mathcal F$ and for
every $v\in V$ the non-emptiness of $\partial_F(f|_V)(v)$ implied the non-emptiness of $\partial_Ff(v)$.}
\end{remark} 

\begin{proof} We obviously have that 
$$
\big(\partial_Ff(v)\cap Y\big)|_V \subset \big(\partial_Ff(v)\big)|_V \subset \partial_F(f|_V)(v).
$$
It remains to prove that $\partial_F(f|_V)(v) \subset \big(\partial_Ff(v)\cap Y\big)|_V$ holds for every $v\in V$.
For $x\in X,\ x^*\in X^*,\ r\in\R,\ 0<\dd_1<\dd_2$, and $V\subset X$ we define
$$
I_V(x,x^*,r,\delta_1,\delta_2):=\inf\bigg\{\frac1{\|h\|}\big(f(x+h)-r-\langle x^*,h\rangle\big):\
h\in V\ \ {\rm and}\ \ \dd_1<\|h\|<\dd_2\bigg\};
$$ 
if $V=X$, we omit the index $V$. (A novelty here is that we operate with $x^*$, element of the dual $X^*$,
which was ``forbidden'' for three decades, and also that $f(x)$ is replaced by $r\in\R$.) Further for each such cortege $x, x^*, r, \dd_1, \dd_2$ and each $\gg>0$,  
if $I(x, x^*, r, \dd_1, \dd_2)>-\infty$, we find a vector $h(x, x^*, r, \dd_1, \dd_2,\gg)\in X$ such that
\begin{equation}\label{16}
\frac1{\|h(x, x^*, r, \dd_1, \dd_2,\gg)\|}\big(f(x+h(x, x^*, r, \dd_1, \dd_2,\gg))-r-\langle x^*,h(x, x^*, r, \dd_1, \dd_2,\gg)\rangle\big)
<I(x, x^*, r, \dd_1, \dd_2) + \gamma
\end{equation}

Let $\AA\subset\SS_{\oo}(X\times X^*)$ be the rich family found in Theorem~\ref{dno}. We define a family $\RR$
as that consisting of all $V\times Y\in\AA$ satisfying
\begin{equation}\label{17}
I(x,x^*,r,\dd_1,\dd_2)=I_V(x,x^*,r,\dd_1,\dd_2)\ \ {\rm whenever}\ \ x\in V,\ x^*\in Y,\ r\in\R,\ \ {\rm and}\ \ 0<\dd_1<\dd_2.
\end{equation}
We shall prove that $\RR$ is cofinal in $\SS(X\times X^*)$. Let $\Q$ denote the set of all rational numbers and put
$\Q_+=\Q\cap(0,+\infty)$. Fix any $Z\in \SS(X\times X^*)$.
Since $\AA$ is rich, there is $V_0\times Y_0\in \AA$ such that $V_0\times Y_0\supset Z$.
Find countable sets $C_0, D_0$ contained and dense in $V_0$ and $Y_0$, respectively.
We shall construct increasing sequences $Y_0\times V_0,\ V_1\times Y_1,\ V_2\times Y_2,\ \ldots$
in $\AA$, and $C_0\times D_0,\ C_1\times D_1,\ C_2\times D_2,\ \ldots$ in $\CC_{\oo}(X\times X^*)$
such that $\overline{C_i}=V_i,\ \overline{D_i}=Y_i$ for every $i\in\N$, and having some extra properties
described below. Let $m\in\N$ be arbitrary and assume that we have already found $V_{m-1}, Y_{m-1}, C_{m-1}, D_{m-1}$.
From the cofinality of $\AA$ we find $V_m\times Y_m\in\AA$ such that 
$V_m$ contains the (countable) set   
$$
\widetilde C:=C_{m-1}\cup \big\{h(x, x^*, q, \dd_1, \dd_2,\gg):\ x\in C_{m-1},\  x^*\in D_{m-1},\
q\in\Q,\ \dd_1,\dd_2,\gamma\in\Q_+,\ {\rm and}\ \dd_1<\dd_2\big\}
$$
and $Y_m\supset Y_{m-1}$. Find then a countable set $\widetilde{C}\subset C_m\subset V_m$
such that $\overline{C_m}=V_m$ and a countable set $D_{m-1}\subset D_m\subset Y_m$ so that $\overline{D_m}=Y_m$.
Do so subsequently for every $m\in\N$. Put $V:=\overline{V_0\cup V_1\cup V_2\cup\cdots}\,$ and 
$Y:=\overline{Y_0\cup Y_1\cup Y_2\cup\cdots}\,$. The $\sigma$-completeness of $\AA$ guarantees that
$V\times Y$ belongs to $\RR$.

We shall show that $V\times Y\in\RR$. This means that we have to verify (\ref{17}). 
So, fix any cortege $x,x^*,r,\dd_1,\dd_2$ as there. Consider any $h\in X$ such that $\dd_1<\|h\|<\dd_2$.
We have to show that  $\frac1{\|h\|}(f(x+h)-r-\langle x^*,h\rangle) \ge I_V(x,x^*,r,\dd_1,\dd_2)$.
This inequality is trivially satisfied if $I_V(x,x^*,r,\dd_1,\dd_2)=-\infty$ Further assume that this is not so.
Pick some $\dd_1',\dd_2'\in\Q$ such that $\dd_1<\dd_1'<\|h\|<\dd_2'<\dd_2$. 
It is easy to check that $V=\overline{C_0\cup C_1\cup\cdots}\, $ and $Y=\overline{D_0\cup D_1\cup\cdots}\,$.
Find $x_0\in C_0,\ x_1\in C_1, \ \ldots$ and $x^*_0\in D_0,\ x^*_1\in D_1,\ \ldots $ such that $\|x_i-x\|\longrightarrow0$
and $\|x^*_i-x^*\|\longrightarrow0$ as $i\rightarrow\infty$. Consider any fixed $\gg\in\Q_+\,$. 
Pick $q\in\Q$ such that $|q-r|<\gamma\|h\|$. Denote 
$N_1:= \big\{i\in \N:\ \|x_i-x\|<\min\{\dd_1'-\dd_1,\dd_2-\dd_2'\}\big\}$;
this is a co-finite set in $\N$. Now, take any $k\in V$, with $\dd_1'<\|k\|<\dd_2'$. 
For $i\in N_1$ we have $\dd_1<\|x_i-x+k\|<\dd_2$ and then we can estimate
\begin{eqnarray}\label{81}
 \begin{aligned}
  &\frac1{\|k\|}\big(f(x_i+k)-q-\la x^*_i,k\ra\big)\\ \
  &=\frac{\|k+x_i-x\|}{\|k\|}\cdot\frac1{\|k+x_i-x\|}\big(f(x+(x_i-x+k))-r-\la x^*,x_i-x+k\ra\big)\\ 
  &+\frac1{\|k\|}\big(\la x^*,x_i-x+k\ra - \la x^*_i,k\ra\big) + \frac{r-q}{\|k\|}\\ 
  &\ge \Big(1+s_i\frac{\|x_i-x\|}{\|k\|}\Big)I_V(x,x^*,r,\dd_1,\dd_2)-\frac1{\dd_1}\big(\|x^*\|\|x_i-x\|  +\dd_2\|x^*-x^*_i\|\big) -\gg\frac{\dd_2}{\dd'_1} 
 \end{aligned}
\end{eqnarray}
where $s_i=1$ if $I_V(x,x^*,r,\dd_1,\dd_2)\le0$ and $s_i=-1$ otherwise.
It then follows that
\begin{eqnarray}\label{36}
 \begin{aligned}
  &I_V(x_i,x^*_i,q,\dd_1',\dd_2')\\ 
  &\ge \Big(1\!+s_i\frac{\|x_i-x\|}{\dd_1}\Big)I_V(x,x^*,r,\dd_1,\dd_2)-\!\frac1{\dd_1} \big(\|x^*\|\|x_i-x\|+\dd_2\|x^*-x^*_i\|\big)-\gg\frac{\dd_2}{\dd'_1}>\!-\infty,
 \end{aligned}
\end{eqnarray} 
and, in particular $I_V(x_i,x^*_i,q,\dd_1',\dd_2')>-\infty$, holds for every $i\in N_1$. 

Now, put 
\begin{eqnarray}\label{84}
N_2:=\big\{i\in N_1:\ \dd_1'<\|h+x-x_i\|<\dd_2'\ {\rm and}\ \langle x^*_i,x-x_i\rangle 
+\langle x^*_i-x^*,h\rangle > -\|h\|\gamma\big\};
\end{eqnarray}
this is still a co-finite set in $\N$. 
Using (\ref{16}), for every $i\in N_2$ we can estimate, 
\begin{eqnarray}\label{18}
 \begin{aligned}
  &\frac1{\|h\|}\big(f(x+h)-r-\langle x^*,h\rangle\big) \\ 
  =&\frac{\|x-x_i+h\|}{\|h\|}\cdot\frac1{\|x-x_i+h\|}\big(f(x_i+(x-x_i+h))-q-\langle x^*_i,x-x_i+h\rangle\big)\\  
  & +\frac1{\|h\|}\big(\la x^*_i,x-x_i\ra +\la x^*_i-x^*,h\ra\big) + \frac{q-r}{\|h\|}\\
  >& \frac{\|x-x_i+h\|}{\|h\|}I(x_i,x^*_i,q,\delta'_1,\delta'_2) -\gg-\gg \\  
  \ge &\frac{\|x-x_i+h\|}{\|h\|}\bigg[\frac1{\|h(x_i,x^*_i,q,\dd_1',\dd_2',\gamma)\|}
  \big(f\big(x_i+h(\cdots)\big)-q-\big\la x^*_i,h(\cdots)\big\ra\big)-\gamma\bigg]-2\gg\\ 
  \ge &\frac{\|x-x_i+h\|}{\|h\|}\big[I_V\big(x_i,x^*_i,q,\dd_1',\dd_2',\big)-\gamma\big]-2\gg; 
 \end{aligned}
\end{eqnarray}
here $\cdots$ meant the cortege $x_i, x^*_i, q,\delta'_1,\delta'_2,\gamma$. Now, plugging here (\ref{36}), and then letting $N_2\ni i\rightarrow\infty$, we get that
$$
\frac1{\|h\|}\big(f(x+h)-r-\la x^*,h\ra\big) \ge I_V(x,x^*,r,\dd_1,\dd_2) - 3\gamma -\gg\frac{\dd_2}{\dd'_1}\,,
$$
Finally, realizing that $\gg\in\Q_+$ could be arbitrarily small, we get that $\frac1{\|h\|}\big(f(x+h)-r-\la x^*,h\ra\big) \ge I_V(x,x^*,r,\dd_1,\dd_2)$.
This, of course, implies that $I(x,x^*,r,\dd_1,\dd_2)\ge I_V(x,x^*,r,\dd_1,\dd_2)$.
\smallskip

The proof of $\sigma$-completeness of $\RR$ is very similar to (but a bit different from) the proof of cofinality. 
Let $V_1,\times Y_1,\ V_2\times Y_2,\ \ldots$ be an increasing sequence of elements in our $\RR$.
We have to verify that $\overline{V_1\times Y_1 \cup V_2\times Y_2\cup \cdots}\,$ also belongs to $\RR$.
Clearly, this set is of form $V\times Y$. As $\AA$ is $\sigma$-complete, $V\times Y\in\AA$.
It remains to verify (\ref{17}). 
So, fix any cortege $x,x^*,r,\dd_1$, and $\dd_2$ as there. Consider any $h\in X$ such that $\dd_1<\|h\|<\dd_2$.
We have to show that  $\frac1{\|h\|}(f(x+h)-r-\langle x^*,h\rangle) \ge I_V(x,x^*,r,\dd_1,\dd_2)$.
This inequality is trivially satisfied if $I_V(x,x^*,r,\dd_1,\dd_2)=-\infty$. Further assume that this is not so.
Pick some $\dd_1',\dd_2'\in\Q$ such that $\dd_1<\dd_1'<\|h\|<\dd_2'<\dd_2$. 
It is easy to check that $V=\overline{V_1\cup V_2\cup\cdots}\, $ and $Y=\overline{Y_1\cup Y_2\cup\cdots}\,$.
Find $x_1\in V_1,\ x_2\in V_2,\ \ldots$ and $x^*_1\in Y_1,\ x^*_2\in Y_2,\ \ldots $ such that $\|x_i-x\|\longrightarrow0$
and $\|x^*_i-x^*\|\longrightarrow0$ as $i\rightarrow\infty$. Consider any fixed $\gg\in\Q_+$. 
Pick $q\in\Q$ such that $|q-r|<\gamma\|h\|$. Denote 
$N_1:= \big\{i\in \N:\ \|x_i-x\|<\min\{\dd_1'-\dd_1,\dd_2-\dd_2'\}\big\}$;
this is a co-finite set in $\N$. Now, take any $k\in V$, with $\dd_1'<\|k\|<\dd_2'$. 
For $i\in N_1$ we have $\dd_1<\|x_i-x+k\|<\dd_2$ and then we can estimate 
(This chain is exactly as (\ref{81}).)
\begin{eqnarray*}
&&\frac1{\|k\|}\big(f(x_i+k)-q-\la x^*_i,k\ra\big)\\
  &=&\frac{\|k+x_i-x\|}{\|k\|}\cdot\frac1{\|k+x_i-x\|}\big(f(x+(x_i-x+k))-r-\la x^*,x_i-x+k\ra\big)\\
&&+\frac1{\|k\|}\big(\la x^*,x_i-x+k\ra - \la x^*_i,k\ra\big) + \frac{r-q}{\|k\|}\\
&\ge& \Big(1+s_i\frac{\|x_i-x\|}{\|k\|}\Big)I_V(x,x^*,r,\dd_1,\dd_2)-	\frac1{\dd_1}\big(\|x^*\|\|x_i-x\|+\dd_2\|x^*-x^*_i\|\big)
-\gg\frac{\dd_2}{\dd'_1}
\end{eqnarray*}
where $s_i=1$ if $I_V(x,x^*,r,\dd_1,\dd_2)\le0$ and $s_i=-1$ otherwise.
It then follows that (This is exactly as (\ref{36}).)
\begin{eqnarray}\label{336}
 \begin{aligned}
&I_V(x_i,x^*_i,q,\dd_1',\dd_2')\\ 
\quad\quad\ge& \Big(1+s_i\frac{\|x_i-x\|}{\dd_1}\Big)I_V(x,x^*,r,\dd_1,\dd_2)-
\frac1{\dd_1}\big(\|x^*\|\|x_i-x\|+\dd_2\|x^*-x^*_i\|\big)-\gg\frac{\dd_2}{\dd'_1}>-\infty,
 \end{aligned}
\end{eqnarray} 
and, in particular $I_V(x_i,x^*_i,q,\dd_1',\dd_2')>-\infty$, holds for every $i\in N_1$. 

Now, put (This $N_2$ is defined exactly as in (\ref{84}).)
$$
N_2:=\big\{i\in N_1:\ \dd_1'<\|h+x-x_i\|<\dd_2'\  {\rm and}\ \langle x^*_i,x-x_i\rangle 
+ \langle x^*_i-x^*,h\rangle>-\gamma\|h\|\big\};
$$
this is still a co-finite set in $\N$. 
Using (\ref{336}), for every $i\in N_2$ we can estimate (The following chain is a bit different from (\ref{18}).) 
\begin{eqnarray*}
&&\frac1{\|h\|}\big(f(x+h)-r-\langle x^*,h\rangle\big) \\ \nonumber
&=&\frac{\|x-x_i+h\|}{\|h\|}\cdot\frac1{\|x-x_i+h\|}\big(f(x_i+(x-x_i+h))-q-\langle x^*_i,x-x_i+h\rangle\big)\\ \nonumber 
&& +\frac1{\|h\|}\big(\la x^*_i,x-x_i\ra +\la x^*_i-x^*,h\ra\big) + \frac{q-r}{\|h\|}\\ \nonumber
&>& \frac{\|x-x_i+h\|}{\|h\|} I(x_i,x^*_i,q,\dd'_1,\dd'_2)-\gg-\gg \\ \nonumber
&=& \frac{\|x-x_i+h\|}{\|h\|} I_{V_i}(x_i,x^*_i,q,\dd'_1,\dd'_2)-2\gg\ \ \
\hbox{\rm (as $(x_i,x^*_i)\in V_i\times Y_i\in\RR$ and (\ref{17}) holds)} \\ \nonumber
&\ge& \frac{\|x-x_i+h\|}{\|h\|} I_V(x_i,x^*_i,q,\dd'_1,\dd'_2)-2\gg.
\end{eqnarray*}
Now, plugging here (\ref{336}), and then letting $N_2\ni i\rightarrow\infty$, we get that
$$
\frac1{\|h\|}\big(f(x+h)-r-\la x^*,h\ra\big) \ge I_V(x,x^*,r,\dd_1,\dd_2) - 2\gamma -\gg\frac{\dd_2}{\dd'_1}\,,
$$
Finally, realizing that $\gg\in\Q_+$ could be arbitrarily small, we get that $\frac1{\|h\|}\big(f(x+h)-r-\la x^*,h\ra\big) \ge I_V(x,x^*,r,\dd_1,\dd_2)$.
This, of course, implies that $I(x,x^*,r,\dd_1,\dd_2)\ge I_V(x,x^*,r,\dd_1,\dd_2)$.
We proved that $\RR$ is $\sigma$-complete, and therefore $\RR$ is a rich rectangle family
in $X\times X^*$.
\smallskip

That $Y_1\subset Y_2$ whenever $V_1\times Y_1,\ V_2\times Y_2\in\RR$ and $V_1\subset V_2$, 
follows immediately from the same property shared by $\AA$. 
\smallskip 

It remains to prove that our $\RR$ ``works''. So, pick any $V\times Y\in\RR$. We know 
from Theorem~\ref{dno} that $Y\ni x^*\longmapsto x^*|_V\in V^*$ is (an isometry) onto. Assume there is
$(v,v^*)\in V\times V^*$ such that $v^*\in\partial_F(f|_V)(v)$. Find (a unique) $x^*\in Y$ such that
$x^*|_V=v^*$. We shall show that $x^*\in\partial_F f(v)$. So, fix any $\ee>0$. Find $\delta>0$ such that
$f(v+k)-f(v)-\la v^*,k\ra >-\ee \|k\|$ whenever $k\in V$ and $0<\|k\|<\delta$; then $I_V(v,v^*,f(v),\dd_1,\dd)\ge-\ee$.
Now, let $h\in X$ be any vector such that $0<\|h\|<\delta$. Pick $\dd_1\in(0,\|h\|)$.
Then we have
\begin{eqnarray*}
\frac1{\|h\|}\big(f(v+h)-f(v)-\la x^*,h\ra\big) \ge I(v,x^*,f(v),\dd_1,\dd) =I_V(v,v^*,f(v),\dd_1,\dd)\ge-\ee 
\end{eqnarray*} 
by (\ref{17}). We proved that $x^*$ belongs to $\partial f(v)$, and so $v^*$ belongs to $\big(\partial_Ff(v)\cap Y\big)|_V$.
\end{proof}

\begin{corollary}\label{123}
Let $(X,\nn)$ be a (rather non-separable) Asplund space and let $f:X\longrightarrow(-\infty,+\infty]$ be any proper function. Then there exists a rich family $\RR\subset\SS(X)$ such that 
for every $V\in \RR$ and for every $v\in V$ we have: 
\item{(i)} $\partial_F f(v)\neq\emptyset$ if (and only if) $\partial_F(f|_V)(v)\neq\emptyset$ (see \cite{f3}).
\item{(ii)} For $t\ge0$ we have $\partial_F f(v)\setminus tB_{X^*}\neq\emptyset$ if (and only if) $\partial_F(f|_V)(v)\setminus tB_{V^*}\neq\emptyset$ (see \cite{fm}).
\item{(iii)} $f$ is Fr\'echet differentiable at $v$ if (and only if) $f|_V$ is Fr\'echet differentiable at $v$; 
and in this case $\|f'(v)\|=\big\|(f|_V)'(v)\big\|$ (see \cite{pr,z}).
\end{corollary}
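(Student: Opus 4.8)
The plan is to deduce all three items from Theorem~\ref{nova}; the only extra work is to turn the rectangle-family it produces into an honest family in $\SS(X)$, plus a little bookkeeping of norms.

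First I would apply Theorem~\ref{nova} to $f$. Its proof applies verbatim with $\inf$ replaced by $\sup$ in the definition of $I_V$, so one also gets the analogous statement for the Fr\'echet super-differential $\partial_F^+f:=-\partial_F(-f)$, namely $(\partial_F^+f(v)\cap Y)|_V=(\partial_F^+f(v))|_V=\partial_F^+(f|_V)(v)$. By Proposition~\ref{bm} the intersection of the two rich rectangle-families so obtained is again a rich rectangle-family $\RR_0\subset\SS_{\oo}(X\times X^*)$, monotone in the sense that $Y_1\subset Y_2$ whenever $V_1\times Y_1,V_2\times Y_2\in\RR_0$ and $V_1\subset V_2$, and such that for every $V\times Y\in\RR_0$ the map $Y\ni x^*\mapsto x^*|_V\in V^*$ is an isometry of $Y|_V$ onto $V^*$ and the above identities hold for every $v\in V$, both for $\partial_F$ and for $\partial_F^+$. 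Applying monotonicity with $V_1=V_2$ shows $Y$ is determined by $V$, so $\RR:=\{V\in\SS(X):V\times Y\in\RR_0\text{ for some }Y\}$ is well defined; it is cofinal because $\RR_0$ is (given $Z$, take $V\times Y\in\RR_0$ with $V\times Y\supset Z\times\{0\}$), and $\sigma$-complete because an increasing sequence in $\RR$ forces, by monotonicity, the partners to increase too, so $\overline{\bigcup(V_i\times Y_i)}=V\times Y$ lies in $\RR_0$ by its $\sigma$-completeness, whence $V\in\RR$. Thus $\RR$ is rich in $X$. (For item (ii) one more refinement of $\RR_0$ is needed before projecting; see below.)

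Item (i) is then immediate: the nontrivial implication ``$\partial_F(f|_V)(v)\neq\emptyset\Rightarrow\partial_Ff(v)\neq\emptyset$'' holds because $\partial_F(f|_V)(v)=(\partial_Ff(v)\cap Y)|_V$ can be non-empty only if $\partial_Ff(v)\cap Y$ is, and the converse holds in any Banach space by restricting the defining inequality to increments $h\in V$. Item (iii) follows: testing the Fr\'echet estimate with $h$ and $-h$ shows that $f$ is Fr\'echet differentiable at $v$ if and only if $\partial_Ff(v)\neq\emptyset$ and $\partial_F^+f(v)\neq\emptyset$, and similarly for $f|_V$; so (i) and its $\partial_F^+$-analogue give the asserted equivalence, and when $f$ is differentiable at $v$ one has $\partial_Ff(v)=\{f'(v)\}$, $\partial_F(f|_V)(v)=\{(f|_V)'(v)\}=\{f'(v)|_V\}$, while $(\partial_Ff(v)\cap Y)|_V=(\partial_Ff(v))|_V\neq\emptyset$ forces $f'(v)\in Y$, so $\|(f|_V)'(v)\|=\|f'(v)|_V\|=\|f'(v)\|$ by the isometry.

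Item (ii) is the delicate one. The implication ``$\partial_F(f|_V)(v)\setminus tB_{V^*}\neq\emptyset\Rightarrow\partial_Ff(v)\setminus tB_{X^*}\neq\emptyset$'' is free: given $v^*\in\partial_F(f|_V)(v)$ with $\|v^*\|>t$, choose $x^*\in\partial_Ff(v)\cap Y$ with $x^*|_V=v^*$ (possible since $\partial_F(f|_V)(v)=(\partial_Ff(v)\cap Y)|_V$); then $x^*\in\partial_Ff(v)$ and $\|x^*\|=\|v^*\|>t$ by the isometry. The reverse implication is the obstacle, and Theorem~\ref{nova} alone does not deliver it: if $x^*\in\partial_Ff(v)$ has $\|x^*\|>t$ then $x^*|_V\in\partial_F(f|_V)(v)$, but $\|x^*|_V\|$ may be strictly smaller than $\|x^*\|$, i.e. $\sup\{\|x^*|_V\|:x^*\in\partial_Ff(v)\}$ need not equal $\sup\{\|x^*\|:x^*\in\partial_Ff(v)\}$ (the large-norm subgradient can be swapped, inside $Y$, for one with the same restriction to $V$ but smaller norm). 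To cure this I would, before projecting, intersect $\RR_0$ with the rich family of those $V\times Y$ admitting a countable dense set $D\subset V$ such that for each $v\in D$ and each rational $t$ with $\partial_Ff(v)\not\subset tB_{X^*}$ there is $u\in V$, $\|u\|\le1$, with $\sup\{\la x^*,u\ra:x^*\in\partial_Ff(v)\}>t$; such $u$ exist by the very definition of the norm on $X^*$, only countably many are adjoined at each construction step, and $\sigma$-completeness is routine, so this family is rich and the intersection with $\RR_0$ stays rich by Proposition~\ref{bm}. For such $V$ and $v\in D$ the reverse implication of (ii) is then clear. The genuine difficulty --- the step I expect to be hardest --- is to upgrade it from $v\in D$ to an arbitrary $v\in V$; since $v\mapsto\partial_Ff(v)$ is not continuous this is not automatic, and I would handle it either by re-running the proof of Theorem~\ref{nova} with the infima $I_V$ supplemented by an auxiliary quantity encoding $\sup\{\|x^*|_V\|:x^*\in\partial_Ff(v)\}$, or by feeding the operator version Theorem~\ref{dnoo} (with $T$ the relevant restriction map) into the argument, in the spirit of \cite{fm}.
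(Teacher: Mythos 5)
Your construction and your treatment of items (i), (iii) and the substantive (``if'') direction of (ii) coincide with the paper's own proof: the authors likewise take the rectangle-families from Theorem~\ref{nova} for $f$ and for $-f$ (your ``superdifferential'' variant is the same thing), intersect them via Proposition~\ref{bm}, project to the first coordinate using the monotonicity $V_1\subset V_2\Rightarrow Y_1\subset Y_2$ to see the projection is rich, and for (iii) lift $v^*$ and $-v^*$ to elements of $\partial_Ff(v)\cap Y$ and $\partial_F(-f)(v)\cap Y$, reading the norm equality off the isometry. All of that is correct and complete.

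Where you diverge is the parenthetical ``only if'' of (ii), and there your diagnosis is sound but your repair is not finished. You are right that the implication $\partial_Ff(v)\setminus tB_{X^*}\neq\emptyset\Rightarrow\partial_F(f|_V)(v)\setminus tB_{V^*}\neq\emptyset$ is not a formal consequence of the displayed conclusion of Theorem~\ref{nova}: since $\partial_F(f|_V)(v)=(\partial_Ff(v)\cap Y)|_V$ with the restriction isometric on $Y$, that implication amounts to $\sup\{\|x^*|_V\|:x^*\in\partial_Ff(v)\}=\sup\{\|x^*\|:x^*\in\partial_Ff(v)\}$, which the identities and the isometry alone do not deliver (for instance, for $g=\operatorname{dist}(\cdot,V)$ one has $\partial_Fg(v)=B_{X^*}\cap V^{\perp}$ and $\partial_F(g|_V)(v)=\{0\}$ at every $v\in V$, in full harmony with the three-way identity and the isometry, yet the only-if fails for every $t\in[0,1)$; this shows the stated conclusion of Theorem~\ref{nova} cannot imply it). The paper itself dispatches (ii) with ``follows immediately from Theorem~\ref{nova}'', so it does not resolve this direction any more explicitly than you do; a genuine proof would have to exploit the stronger internal condition (\ref{17}) from the proof of Theorem~\ref{nova}, or run a dedicated construction as in \cite{fm}. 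Your proposed remedy --- adjoining norming vectors for the sets $\partial_Ff(v)$ over a countable dense $D\subset V$ and rational $t$ --- does establish the only-if for $v\in D$, but, as you yourself concede, the passage from $D$ to an arbitrary $v\in V$ is exactly the hard point (subdifferentials are not continuous in $v$), and neither of your two suggested ways of completing it is carried out. So item (ii)'s only-if direction remains unproved in your write-up; everything else is correct and follows the paper's route.
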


\begin{proof}
Let $\RR_1$ and $\RR_2$ be rich rectangle-families found in Theorem~\ref{nova} for the functions $f$ and $-f$,
respectively. Let $\RR$ be the ``projection of $\RR_1\cap\RR_2$ on the first coordinate'',
that is, put 
$$
\RR:=\big\{V\in\SS(X):\ V\times Y\in\RR_1\cap\RR_2\ \ \rm for\  some\ Y\in\SS(X^*)\big\}.
$$
It is easy check that $\RR$ is rich. It works. Indeed, (i) and (ii) follow immediately from Theorem~\ref{nova}. As regards (iii), take any $V\in\RR$ and any $v\in V$.
Find $Y\in\SS(X^*)$ so that $V\times Y$ is in $\RR_1\cap\RR_2$. Then
(i) and (ii) immediatley follow from Theorem~\ref{nova}.
Further, assume that $f|_V$ is Fr\'echet differentiable at $v$ and put $v^*:=(f|_V)'(v)$. This implies that $v^*\in\partial_F (f|_V)(v)$
and $-v^*\in\partial_F ((-f)|_V)(v)$. Find the unique $x^*\in Y$ such that $x^*|_V=v^*$ and $\|x^*\|=\|v^*\|$;
then $(-x^*)|_V=-v^*$. Now, by Theorem~\ref{nova},
$x^*\in\partial_Ff(v)$ and $-x^*\in\partial_F(-f)(v)$. It then follows that 
$f$ is Fr\'echet differentiable at $v$, with $f'(v)=x^*$
and $\|f'(v)\|= \|x^*\|=\|v^*\|=\|(f|_V)'(v)\|$.
\end{proof}

\begin{corollary}\label{fuzzy} {\rm (\cite{fm})}
Let $(X,\|\cdot\|)$ be an Asplund space, let $f:X\longrightarrow(-\infty,+\infty]$ be a lower semicontinuous function,
and $g:X\longrightarrow(-\infty,+\infty]$ be a function uniformly continuous in a vicinity of a certain $\overline x\in X$.
Then: 
\item{(i)} The set $\{x\in X:\ \partial_Ff(x)\neq\emptyset\}$ is dense in the domain of $f$.

\item{(ii)} If $x^*\in\partial_F(f+g)(\overline x)$, then for every $\ee>0$
there are $x_1, x_2\in X,\ x_1^*\in\partial_Ff(x_1)$, and $x_2^*\in\partial_Fg(x_2)$ such that
$\| x_1-\overline x\|<\ee$, $\| x_2-\overline x\|<\ee$,  and $\|x_1^*+x_2^*-x^*\|<\ee$.
\end{corollary}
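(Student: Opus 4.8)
The plan is to derive both assertions by combining Corollary~\ref{123}(i) (the separable reduction of non-emptiness of the Fréchet subdifferential) with the classical separable versions of the density result and of the fuzzy sum rule, which are already available in the literature for separable Asplund spaces (where $X^*$ is separable). The philosophy is exactly that of the paper: reduce to a separable subspace on which the restricted functions retain the relevant subdifferential data, invoke the known separable statement there, and then lift back using the rich-family machinery of Theorem~\ref{nova}.

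For part (i), I would argue as follows. Fix $x_0$ in the domain of $f$ and $\ee>0$; we must find $x$ with $\|x-x_0\|<\ee$ and $\partial_Ff(x)\neq\emptyset$. Apply Corollary~\ref{123} to $f$ to obtain a rich family $\RR\subset\SS(X)$; by cofinality pick $V\in\RR$ with $x_0\in V$. Since $V$ is a separable Asplund space and $f|_V$ is lower semicontinuous on $V$, the classical density theorem for Fréchet subdifferentials in separable Asplund spaces (essentially Preiss's theorem, or the separable Borwein–Preiss variational principle argument) produces $x\in V$ with $\|x-x_0\|<\ee$ and $\partial_F(f|_V)(x)\neq\emptyset$. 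By Corollary~\ref{123}(i), $\partial_Ff(x)\neq\emptyset$ as well, which is what we wanted.

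For part (ii), I would first choose the rich families supplied by Theorem~\ref{nova} for the three functions $f$, $g$, and $f+g$ (note $g$ is, after possibly redefining it to $+\infty$ far from $\overline x$, a proper function, and near $\overline x$ it is locally uniformly continuous hence locally bounded, so this causes no trouble); by Proposition~\ref{bm} their intersection, projected to the first coordinate as in the proof of Corollary~\ref{123}, yields a single rich family $\RR\subset\SS(X)$ that works simultaneously for all three — and we may also insist, again using Proposition~\ref{bm}, that each $V\in\RR$ contains $\overline x$ and carries the vicinity of $\overline x$ on which $g$ is uniformly continuous. Given $x^*\in\partial_F(f+g)(\overline x)$ and $\ee>0$, pick $V\in\RR$ with $\overline x\in V$; by the reduction property $x^*|_V\in\partial_F((f+g)|_V)(\overline x)=\partial_F(f|_V+g|_V)(\overline x)$. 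Now $V$ is a separable Asplund space, $f|_V$ is lower semicontinuous and $g|_V$ is uniformly continuous near $\overline x$, so the classical fuzzy sum rule in separable Asplund spaces gives $v_1,v_2\in V$ with $\|v_i-\overline x\|<\ee$, together with $v_1^*\in\partial_F(f|_V)(v_1)$ and $v_2^*\in\partial_F(g|_V)(v_2)$ satisfying $\|v_1^*+v_2^*-x^*|_V\|<\ee/2$. Using Theorem~\ref{nova} (for $f$ and for $g$ separately) lift $v_1^*,v_2^*$: there are $x_1^*\in\partial_Ff(v_1)\cap Y$ and $x_2^*\in\partial_Fg(v_2)\cap Y$ with $x_i^*|_V=v_i^*$.

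The main obstacle — and the only point requiring real care — is the final norm estimate $\|x_1^*+x_2^*-x^*\|<\ee$. On the subspace $V$ we only control $\|(x_1^*+x_2^*-x^*)|_V\|=\|v_1^*+v_2^*-x^*|_V\|<\ee/2$, but a priori the full norm in $X^*$ could be larger. This is precisely where the isometry built into Theorem~\ref{nova} is essential: since $V\times Y\in\RR$ and $x_1^*,x_2^*\in Y$, and since we may arrange (enlarging $V$ within the rich family if needed, or observing that $x^*$ itself can be taken in $Y$ via the same reduction applied to $f+g$) that $x_1^*+x_2^*-x^*\in Y$, the restriction map is an isometry on $Y$, so $\|x_1^*+x_2^*-x^*\|=\|(x_1^*+x_2^*-x^*)|_V\|<\ee/2<\ee$. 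The delicate bookkeeping is ensuring $x^*$ can be replaced by an element of the same $Y$: this is done by also running Theorem~\ref{nova} for $f+g$, picking the unique preimage in $Y$ of $x^*|_V$, which by the isometry has the same norm and the same restriction, and then carrying this preimage throughout. With that arranged, $x_1,x_2:=v_1,v_2$ and the chosen $x_1^*,x_2^*$ satisfy all the requirements.
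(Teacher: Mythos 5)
Your overall strategy is exactly the paper's: prove the separable case by classical means (smooth renorming plus a smooth variational principle), handle (i) by combining that with Corollary~\ref{123}(i), and handle (ii) by passing to a suitable rectangle $V\times Y$ from Theorem~\ref{nova}, invoking the separable fuzzy rule on $V$, and lifting the subgradients back through the isometry $Y\ni\xi\mapsto\xi|_V$. Part (i) is fine as written.

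In part (ii), however, your device for getting the final estimate $\|x_1^*+x_2^*-x^*\|<\ee$ has a genuine flaw. You project the rich rectangle-family to the first coordinate, choose $V$ with $\overline x\in V$ only, and then propose to ``replace $x^*$ by the unique preimage in $Y$ of $x^*|_V$.'' Call that preimage $\tilde x^*$. It satisfies $\tilde x^*|_V=x^*|_V$ and $\|\tilde x^*\|=\|x^*|_V\|$, but in general $\tilde x^*\neq x^*$ and $\|\tilde x^*\|\le\|x^*\|$ with possibly strict inequality: two functionals agreeing on $V$ can be far apart in $X^*$. So even after you establish $\|x_1^*+x_2^*-\tilde x^*\|<\ee$ via the isometry on $Y$, nothing controls $\|x^*-\tilde x^*\|$, and the required inequality for the \emph{given} $x^*$ does not follow. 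Your other parenthetical suggestion, ``enlarging $V$ within the rich family,'' also does not by itself force $x^*$ into $Y$. The correct (and very short) repair is the one the paper uses: do not project to the first coordinate; instead use the cofinality of the rich rectangle-family $\RR_1\cap\RR_2\subset\SS_{\oo}(X\times X^*)$ to choose $V\times Y$ containing the pair $(\overline x,x^*)$, so that $x^*\in Y$ from the outset. Then $x_1^*+x_2^*-x^*\in Y$ literally, and the isometric restriction gives $\|x_1^*+x_2^*-x^*\|=\|v_1^*+v_2^*-x^*|_V\|<\ee$ with no substitution needed. (Note also that you do not need a rich family for $f+g$ at all: the inclusion $x^*|_V\in\partial_F((f+g)|_V)(\overline x)$ is the trivial direction of restriction.)
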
 
  	
\begin{proof} Assume first that $X$ is separable. 
Find an equivalent Fr\'echet smooth norm $|\cdot|$, see e.g. \cite[pages 48, 43]{dgz} arbitrraily close to $\|\cdot\|$. 
Then proceed as in \cite{AI83} and \cite{bm06}, using Borwein-Preiss or Deville-Godefroy-Zizler smooth variational principles \cite[Section 4]{ph}.

Second, assume that $X$ is non-separable. As regards (i), combine
the just proved separable statement with Corollary~\ref{123} (i). 
To prove (ii), assume that $x^*\in\partial_F(f+g)(\overline x)$ and let $\ee>0$
be given. By Theorem~\ref{nova}, find rich families $\RR_1,\ \RR_2$ corresponding to $f, g$,
respectively, and put $\RR:=\RR_1\cap\RR_2$. Find $V\times Y\in\RR$ so that it contains $(\overline x,x^*)$. 
Using the validity of the separable statement, find $x_1,x_2\in V,\ v^*_1\in\partial_F(f|_V)(x_1)$, and
$v^*_2\in\partial_F(g|_V)(x_2)$ such that $\| x_1-\overline x\|<\ee,\ \|x_2-\overline x\|<\ee$, and $\|v_1^*+v_2^*-x^*|_V\|<\ee$.
Now, the conclusion of Theorem~\ref{nova} provides unique $x^*_1\in\partial_Ff(x_1)\cap Y$ and $x^*_2\in\partial_Fg(x_2)\cap Y$
such that $x^*_i|_V=v^*_i,\ i=1,2$. Hence, using the isometric property of the restriction mapping $Y\ni\xi\longmapsto \xi|_V$,
we conclude that $\|x_1^*+x_2^*-x^*\|=\|v_1^*+v_2^*-x^*|_V\|<\ee$.
\end{proof}

Let $(X,\nn)$ be a Banach space, let $\Omega \subset X$, and let $\overline x\in\Omega$.
The {\it Fr\'echet normal cone} $N_F(\overline x,\Omega)$ of $\Omega$ at $\overline x$
is defined as the Fr\'echet subdifferential of the indicator function $\iota_\Omega$ at $\overline x$;
note that $N_F(\overline x,\Omega)$ always contains $0$. By an \it extremal system \rm in $X$ we
understand any triple $(\Omega_1,\Omega_2,\overline x)$ such that $\Omega_1,\Omega_2$ are subsets
of $X$, the point $\overline x$ lies in $\Omega_1\cap\Omega_2$, and there are $\ee>0$ and sequences
$(a^1_n),\ (a^2_n)$ in $X$ satisfying that $(a^1_n+\Omega_1)\cap(a^2_n+\Omega_2)\cap(\overline x+\ee B_X)=\emptyset$
for every $n\in\N$.

\begin{corollary}\label{ep}{\rm (\cite{fm})}
Let $(X,\|\cdot\|)$ be an Asplund space and let $(\Omega_1,\Omega_2,\overline x)$ be an extremal system of closed sets in $X$.
Then: 
\item{(i)} The set $\big\{x\in X:\ N_F(\overline x,\Omega_1)\neq\{0\}\big\}$ is dense in the boundary of $\Omega_1$.

\item{(ii)} The Fr\'echet extremal principle for the triple $(\Omega_1,\Omega_2,\overline x)$ holds,
that is, for every $\ee>0$ there are $x_1, x_2\in X$ such that $\| x_1-\overline x\|<\ee$, $\| x_2-\overline x\|<\ee$
and there are $x_i^*\in N_F(x_i,\Omega_i)+\ee B_{X^*},\ i=1,2$, such that $\|x^*_1\|+\|x^*_2\|=1$, and $x^*_1+x^*_2=0$.
\end{corollary}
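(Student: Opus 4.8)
The plan is to reduce both assertions to their separable counterparts and then lift them through Theorem~\ref{nova}, following the pattern of the proof of Corollary~\ref{fuzzy}. The separable cases of (i) and (ii) are classical: (ii) is the Fr\'echet extremal principle in separable Asplund spaces, and (i) is its standard by-product; both are obtained, as in Corollary~\ref{fuzzy}, from a smooth variational principle after passing to an equivalent Fr\'echet smooth norm (see \cite{fm} and the references therein). The bridge to the non-separable case is Theorem~\ref{nova} applied to the indicator functions $\iota_{\Omega_1},\iota_{\Omega_2}$, together with the trivial identity $\iota_\Omega|_V=\iota_{\Omega\cap V}$ (as functions on $V$), so that $\partial_F(\iota_\Omega|_V)(v)=N_F(v,\Omega\cap V)$ (computed in $V$) for every subspace $V\subset X$ and every $v\in V$. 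We shall also use that each $N_F(\cdot,\Omega)=\partial_F\iota_\Omega(\cdot)$ is a convex cone, and that every $V\in\SS(X)$, being a separable subspace of an Asplund space, is itself a separable Asplund space.

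\emph{Plan for (i).} Apply Theorem~\ref{nova} to $f:=\iota_{\Omega_1}$ and let $\RR$ be the resulting rich family. Fix $\overline x$ in the boundary of $\Omega_1$ and $\ee>0$; choose $y\notin\Omega_1$ with $\|y-\overline x\|<\ee/2$, and by the cofinality of $\RR$ pick $V\times Y\in\RR$ with $\{\overline x,y\}\subset V$. On the segment $[y,\overline x]\subset V$ put $t_0:=\min\{t\in[0,1]:\ (1-t)y+t\overline x\in\Omega_1\}$ (the set is nonempty and closed, and $t_0>0$ since $y\notin\Omega_1$); then $z:=(1-t_0)y+t_0\overline x$ lies in the boundary of $\Omega_1\cap V$ relative to $V$, and $\|z-\overline x\|<\ee/2$. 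By the separable version of (i) there is $x\in V$ with $\|x-z\|<\ee/2$ and $0\neq v^*\in N_F(x,\Omega_1\cap V)=\partial_F(\iota_{\Omega_1}|_V)(x)$. By Theorem~\ref{nova} there is $x^*\in\partial_F\iota_{\Omega_1}(x)\cap Y$ with $x^*|_V=v^*$, and the isometric property of the restriction $Y\ni\xi\mapsto\xi|_V$ forces $\|x^*\|=\|v^*\|>0$; hence $0\neq x^*\in N_F(x,\Omega_1)$ while $\|x-\overline x\|<\ee$.

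\emph{Plan for (ii).} Let $\ee_0>0$ and $(a^1_n),(a^2_n)$ witness the extremality of $(\Omega_1,\Omega_2,\overline x)$. Apply Theorem~\ref{nova} to $\iota_{\Omega_1}$ and to $\iota_{\Omega_2}$, obtaining rich rectangle-families $\RR_1,\RR_2$, and intersect them with the rich family of all $V\times Y\in\SS_{\oo}(X\times X^*)$ such that $V\supset\{\overline x\}\cup\{a^i_n:\ i=1,2,\ n\in\N\}$; by Proposition~\ref{bm} the resulting family $\RR$ is rich, so pick $V\times Y\in\RR$. Since $a^i_n\in V$, one has $a^i_n+(\Omega_i\cap V)=(a^i_n+\Omega_i)\cap V$, whence $(a^1_n+(\Omega_1\cap V))\cap(a^2_n+(\Omega_2\cap V))\cap(\overline x+\ee_0 B_V)=\emptyset$ for every $n$; thus $(\Omega_1\cap V,\Omega_2\cap V,\overline x)$ is an extremal system of closed sets in the separable Asplund space $V$, with the same constant $\ee_0$. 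Fix $\ee>0$ and a small $\eta=\eta(\ee)>0$, and apply the separable version of (ii) in $V$ with parameter $\eta$: it produces $x_1,x_2\in V$ with $\|x_i-\overline x\|<\ee$ and $v^*_i=w^*_i+e^*_i$, where $w^*_i\in N_F(x_i,\Omega_i\cap V)$, $\|e^*_i\|\le\eta$, $\|v^*_1\|+\|v^*_2\|=1$ and $v^*_1+v^*_2=0$. By Theorem~\ref{nova}, lift each $w^*_i$ to $x^*_i\in\partial_F\iota_{\Omega_i}(x_i)\cap Y=N_F(x_i,\Omega_i)\cap Y$ with $x^*_i|_V=w^*_i$; the isometric restriction gives $\|x^*_i\|=\|w^*_i\|$, and since $x^*_1+x^*_2\in Y$ restricts to $-(e^*_1+e^*_2)$, also $\|x^*_1+x^*_2\|\le 2\eta$ and $\|x^*_1\|+\|x^*_2\|\ge 1-2\eta$. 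A routine renormalization --- say, when $\|x^*_1\|\ge\|x^*_2\|$, replace $(x^*_1,x^*_2)$ by $(\beta x^*_1,-\beta x^*_1)$ with $\beta:=(2\|x^*_1\|)^{-1}$, and symmetrically otherwise; this is legitimate because $N_F(\cdot,\Omega_i)$ is a cone --- yields, once $\eta$ is small relative to $\ee$, vectors $x^*_i\in N_F(x_i,\Omega_i)+\ee B_{X^*}$ with $\|x^*_1\|+\|x^*_2\|=1$ and $x^*_1+x^*_2=0$.

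Apart from the (classical) separable input, the two points requiring care are: (a) transporting the extremal structure down to the separable subspace $V$, which is handled simply by forcing $V$ to contain the countably many translation vectors $a^i_n$, so that the same constant $\ee_0$ works in $V$; and (b) converting the only approximate normalization delivered by the isometric restriction map into the exact requirements $\|x^*_1\|+\|x^*_2\|=1$ and $x^*_1+x^*_2=0$, which is absorbed into the $\ee B_{X^*}$ slack using the cone property of the Fr\'echet normal cone. I expect (a) to be the genuine conceptual step, while (b) and the segment argument in (i) are bookkeeping.
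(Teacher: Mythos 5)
Your proposal is correct and follows exactly the route the paper intends: the paper's ``proof'' of this corollary is a single sentence referring to the argument for Corollary~\ref{fuzzy} (intersect the rich families from Theorem~\ref{nova} for the indicator functions, invoke the separable statement from \cite{fm} in $V$, and lift through the isometric restriction $Y\ni\xi\mapsto\xi|_V$). The two details you single out --- forcing $V$ to contain the translation vectors $a^i_n$ so that $(\Omega_1\cap V,\Omega_2\cap V,\overline x)$ remains an extremal system with the same constant, and absorbing the renormalization error into the $\ee B_{X^*}$ slack via the cone property --- are precisely the bookkeeping the paper leaves implicit, and your treatment of them is sound.
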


\noindent The proof is very similar to that of Corollary~\ref{fuzzy}, once we have at hand the ``separable'' statements; \cite{fm}.  

We finish this section by deriving easily a strengthening of the main result of the paper \cite{fi2} from
Theorems~\ref{dnoo} and \ref{nova} in the framework of Asplund spaces.

\begin{theorem}\label{princip}
Let $k\in \N$, let $X$ be a non-separable {\tt Asplund} space, let $Z_1,\ldots, Z_k$ be Banach spaces, 
let $z^*_1\in Z^*_1,\ \ldots, z^*_k\in Z^*_k$, let $T_i: Z_i\to X,\ i=1,\ldots,k$,  
be bounded linear operators, and let $f:X\longrightarrow(-\infty,+\infty]$ be a proper function. 
Then there exists a rich block-family $\RR\subset\SS_{{\oo}^{\!\!\!\!\!\oo}}(Z_1\times\cdots\times Z_k
\times X)$ such that, for every $U_1\times\cdots\times U_k\times V\in\RR$ we have 
$T_1(U_1)\subset V,\ \ldots,\ T_k(U_k)\subset V$ and there is $Y\in\SS(X^*)$ such that:

\item{(i)} The assignment $Y\ni x^*\longmapsto x^*|_V\in V^*$ is an isometry {\tt onto} $V^*$;

\item{(ii)} $\big(\partial_Ff(v)\cap Y\big)|_V=\big(\partial_Ff(v)\big)|_V=\partial_F(f|_V)(v)$
for every $v\in V$; and

\item{(iii)} $\big\|T^*_ix^*-z^*_i\big\|=\big\|\big(T_i|_{U_i}\big)^*(x^*|_V)-(z^*_i|_{U_i})\big\|$ for every $x^*\in Y$ and $i=1,\ldots,k$
\end{theorem}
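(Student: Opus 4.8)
The plan is to build $\RR$ by combining two families that have already been constructed, and then ``projecting''. First I would fix, once and for all, the rich rectangle-family $\AA\subset\SS_{\oo}(X\times X^*)$ furnished by Theorem~\ref{dno}. The key structural observation is that, applying the monotonicity clause of Theorem~\ref{dno}(iii) with $V\subset V$, every subspace $V$ occurring in $\AA$ determines its partner uniquely; write $Y=Y(V)$, so that $V_1\subset V_2$ forces $Y(V_1)\subset Y(V_2)$. Consequently every family refining $\AA$ that we shall encounter is really just a selection of ``good'' subspaces $V$, always accompanied by the same $Y(V)$, so the ``$Y$ that works for $f$'' and the ``$Y$ that works for the $T_i$'' are automatically one and the same object, and no compatibility problem arises.

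Next I would invoke the two earlier results. Theorem~\ref{nova}, applied to $f$, gives a rich family $\RR_0\subset\AA$ such that every $V\times Y\in\RR_0$ satisfies (i) and the subdifferential identities (ii). The $k$-operator version of Theorem~\ref{dnoo} (Remark~\ref{2.6}), applied to $T_1,\ldots,T_k$ and $z^*_1,\ldots,z^*_k$, gives a rich block-family $\AA_T\subset\SS_{{\oo}^{\!\!\!\!\!\oo}}(Z_1\times\cdots\times Z_k\times X\times X^*)$ whose members $U_1\times\cdots\times U_k\times V\times Y$ satisfy $V\times Y\in\AA$, $T_i(U_i)\subset V$, the isometry (i), and the norm identities (iii). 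Form the cylinder $\CC_0:=\{U_1\times\cdots\times U_k\times V\times Y:\ U_i\in\SS(Z_i)\ (i=1,\ldots,k)\ \text{and}\ V\times Y\in\RR_0\}$, which is rich because $\RR_0$ is rich and the $\SS(Z_i)$-slots are unconstrained (the same observation that makes $\RR_1$ rich in the proof of Theorem~\ref{dnoo}). By Proposition~\ref{bm}, $\RR':=\AA_T\cap\CC_0$ is rich; by construction each member $U_1\times\cdots\times U_k\times V\times Y$ of it has $T_i(U_i)\subset V$, properties (i) and (iii) coming from $\AA_T$, property (ii) coming from $V\times Y\in\RR_0$, and $\RR'$ keeps the monotonicity $V_1\subset V_2\Rightarrow Y_1\subset Y_2$ inherited from $\AA$.

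Finally I would put $\RR:=\{U_1\times\cdots\times U_k\times V:\ U_1\times\cdots\times U_k\times V\times Y\in\RR'\ \text{for some}\ Y\in\SS(X^*)\}$ and, given $U_1\times\cdots\times U_k\times V\in\RR$, take $Y:=Y(V)$ as the space witnessing (i), (ii), (iii). The one step demanding care --- and the expected main obstacle --- is that this is a projection of a rich family onto fewer coordinates, an operation which in general destroys richness (cf.\ the warning in Section~2); it survives here only because the discarded coordinate $Y=Y(V)$ is a \emph{monotone function} of the retained $V$. Cofinality of $\RR$ is then immediate: lift a block $M_1\times\cdots\times M_k\times W$ to $M_1\times\cdots\times M_k\times W\times\{0\}$, use cofinality of $\RR'$, and drop the last coordinate. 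For $\sigma$-completeness, an increasing sequence in $\RR$ lifts via $V\mapsto Y(V)$ to an \emph{increasing} sequence in $\RR'$ (this is exactly where the uniqueness and monotonicity of $Y(\cdot)$ are used); its $\sigma$-limit lies in $\RR'$, and deleting the last coordinate yields the required member of $\RR$. Hence $\RR$ is a rich block-family with all the stated properties.
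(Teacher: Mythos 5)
Your proof is correct and follows essentially the same route as the paper: intersect the block-family from Theorem~\ref{dnoo}/Remark~\ref{2.6} with the cylinder over the family from Theorem~\ref{nova}, then project off the $X^*$-coordinate, using the monotonicity $V_1\subset V_2\Rightarrow Y_1\subset Y_2$ (hence uniqueness of $Y(V)$) to see that richness survives the projection. You in fact make explicit the step the paper dismisses with ``clearly'' and ``easily'', namely why this particular projection of a rich family remains rich.
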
 

\begin{proof}
Putting together Theorem~\ref{dnoo} and Remark~\ref{2.6}, we find a rich block-family $\AA_{T_1,\ldots,T_k}$ in
$Z_1\times\cdots\times Z_k\times X\times X^*$ with similar properties  
as the family $\AA_T$ in Theorem~\ref{dnoo} has. Let $\RR'$ be the rich family in $X\times X^*$ found in Theorem~\ref{nova}. Define 
$$
\RR:=\big\{U_1\times\cdots\times U_k\times V:\ U_1\times\cdots\times U_k\times V\times Y\in\AA_{T_1,\ldots,T_k}
\ {\rm and}\ V\times Y\in\RR'
\ \hbox{for some}\ Y\in\SS(X^*)\big\}.
$$
Clearly, $\RR$ is cofinal. And from the ``monotonicity'' property of $\AA_{T_1,\ldots,T_k}$ we easily get that $\RR$ is $\sigma$-complete. That (i) and (iii) are true follows directly from
Theorem~\ref{dnoo}. (ii) comes immediately from Theorem~\ref{nova}.
\end{proof}

\begin{remark} {\em From the theorem above we immediately get a strengthening of
\cite[Corollary 4.5]{fi2} and afterwards \cite[Theorem 4.6]{fi2}.}
\end{remark}

\section{An approach through suitable models from logic}

Here we provide an alternative approach to the proofs presented in the previous sections. The method we used so far is called ``the method of rich families''. An alternative approach we apply below is called ``the method of suitable models''. The reason why we present proofs using this method  is that it is shorter and less technical. Hence, we hope that this presentation will encourage a reader, not familiar with set theory or logic, to get in touch with this technology as well. A relationship between both methods is discussed in Section \ref{section5}. Now 
we just mention that, in the setting of Asplund spaces, both methods are in some sense equivalent;
see Theorem \ref{t:richIffModel} and the discussion below it.

We recall some basic definitions and facts concerning the method of suitable models. A brief description of this method can be found in \cite{crz}; for a more detailed description see \cite{c}. Let $N$ be a fixed set and $\phi$ a formula in the basic language of the set theory. By the \emph{relativization of $\phi$ to $N$} we understand the formula $\phi^N$ which is obtained from $\phi$ by replacing each chain of the form ``$\forall x$'' by ``$\forall x \in N$'' and each chain of the form ``$\exists x$'' by ``$\exists x \in N$''. Let $\phi(x_1,\ldots,x_n)$ be a formula with all free variables shown, i.e., a formula whose free variables are exactly $x_1,\ldots,x_n$. We say that \emph{$\phi$ is absolute for $N$} if
\[
\forall a_1, \ldots, a_n \in N\colon \bigl(\phi^N(a_1,\ldots,a_n) \leftrightarrow \phi(a_1,\ldots,a_n)\bigr).
\]
The method is based mainly on the following theorem (a proof can be found in \cite[Chapter IV, Theorem 7.8]{k}).
The cardinality of a set $A$ is denoted by $|A|$.

\begin{theorem}\label{T:countable-model}
Let $\phi_1, \ldots, \phi_n$ be any formulas and $X$ be any set. Then there exists a set $M \supset X$ such that
$\phi_1, \ldots, \phi_n \text{ are absolute for } M$ and $|M| = \max\,(\aleph_0,|X|)$.
\end{theorem}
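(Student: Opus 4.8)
The plan is to prove Theorem~\ref{T:countable-model} by the classical two-step argument: first apply the Lévy–Montague reflection theorem to pass from the universe $V$ to a set $V_\alpha$, and then apply the downward Löwenheim–Skolem theorem (a Skolem hull construction) to pass from $V_\alpha$ to a small submodel $M$. Everything takes place in ZFC, so in particular every set in sight is wellorderable, which is what makes the construction go through. The two absoluteness facts combine transitively to give absoluteness of $\phi_1,\dots,\phi_n$ for $M$.

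First I would fix the finite set $\Phi$ of all subformulas of $\phi_1,\dots,\phi_n$. By the reflection theorem there is a closed unbounded class of ordinals $\alpha$ for which each $\phi_i$ is absolute between $V_\alpha$ and $V$, i.e.\ $\phi_i^{V_\alpha}(\bar a)\leftrightarrow\phi_i(\bar a)$ for every tuple $\bar a$ from $V_\alpha$; since $X$ is a set it lies in some $V_\beta$, so I may choose such an $\alpha$ with $\alpha\ge\max(\beta,\omega)$, whence $X\subseteq V_\alpha$ and $\omega\subseteq V_\alpha$. Next I would fix a wellordering $<$ of the set $V_\alpha$ and, for each formula $\exists y\,\chi(y,x_1,\dots,x_k)$ that occurs among the subformulas in $\Phi$, define a Skolem function $h_\chi\colon V_\alpha^{\,k}\to V_\alpha$ sending $\bar a$ to the $<$-least $b\in V_\alpha$ with $\chi^{V_\alpha}(b,\bar a)$ if one exists, and to $\emptyset$ otherwise; there are only finitely many such functions, each of finite arity. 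Let $M\subseteq V_\alpha$ be the closure of $X\cup\omega$ under all the $h_\chi$. Closing a set of cardinality $\max(\aleph_0,|X|)$ under finitely many finitary functions does not change its cardinality, and $M$ contains both $X$ and $\omega$, so $X\subseteq M$ and $|M|=\max(\aleph_0,|X|)$.

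It then remains to run the Tarski–Vaught induction: by induction on complexity one shows $\psi^M(\bar a)\leftrightarrow\psi^{V_\alpha}(\bar a)$ for every $\psi\in\Phi$ and every tuple $\bar a$ from $M$. The atomic and propositional cases are immediate; in the case $\psi=\exists y\,\chi(y,\bar x)$ the direction $\psi^M(\bar a)\Rightarrow\psi^{V_\alpha}(\bar a)$ is trivial, and conversely if $\psi^{V_\alpha}(\bar a)$ holds then $b:=h_\chi(\bar a)\in M$ satisfies $\chi^{V_\alpha}(b,\bar a)$, so the induction hypothesis for $\chi$ gives $\chi^M(b,\bar a)$ and hence $\psi^M(\bar a)$. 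Specialising to $\psi=\phi_i$ and combining with the reflection step, for every tuple $\bar a$ from $M$ (hence from $V_\alpha$) we get $\phi_i^M(\bar a)\leftrightarrow\phi_i^{V_\alpha}(\bar a)\leftrightarrow\phi_i(\bar a)$, so $\phi_1,\dots,\phi_n$ are absolute for $M$, while $M\supseteq X$ and $|M|=\max(\aleph_0,|X|)$, as required.

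There is no deep obstacle here — this is a textbook result (Kunen, as cited) — and the ``hard part'' is purely bookkeeping: one must keep $\Phi$ subformula-closed so that every existential subformula of a $\phi_i$ has its Skolem function available and the induction passes through every intermediate formula; one must choose $\alpha$ simultaneously in the reflecting club, above the rank of $X$, and at least $\omega$ (the last point only to guarantee exact cardinality $\max(\aleph_0,|X|)$ rather than a possibly finite hull); and one should note that $V_\alpha$ being a genuine set is exactly what legitimizes the wellordering used to define the $h_\chi$, so no choice principle beyond ZFC is invoked.
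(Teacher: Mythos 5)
Your proof is correct: the reflection-then-Skolem-hull argument --- closing $\Phi$ under subformulas, choosing $\alpha$ in the reflecting club above the rank of $X$ and above $\omega$, wellordering $V_\alpha$ to define finitely many finitary Skolem functions, and running the Tarski--Vaught induction --- is exactly the standard proof of this result. The paper gives no proof of its own but cites Kunen, Chapter IV, Theorem 7.8, which is precisely this argument, so your approach coincides with the intended one.
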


The following notation is useful.

\begin{definition}
\rm Let $\Phi$ be a finite list of formulas and $X$ be any countable set.
Let $M \supset X$ be a {\tt countable} set such that each $\phi$ from $\Phi$ is absolute for $M$.
Then we say that $M$ is a \emph{suitable model for $\Phi$ containing $X$}.
This is denoted by $M \prec (\Phi; X)$.
\end{definition}

\noindent Let us emphasize that a suitable model in our terminology is always countable.

The fact that a certain formula is absolute for $M$ will always be used in order to satisfy
the assumption of the following lemma; see \cite[Lemma 2.3]{crz}.

\begin{lemma}\label{L:unique-M}
Let $\phi(y,x_1,\ldots,x_n)$ be a formula whose all free variables are shown, and let
$M$ be a fixed set such that both formulas ``$\phi$'' and ``$\,\exists y \colon \phi(y,x_1,\ldots,x_n)\!$'' are absolute for
$M$. Assume that there exist $a_1,\ldots,a_n \in M$ and $u$ such that $\phi(u,a_1,\ldots,a_n)$ holds.
Then there exists a set $a \in M$ such that $\phi(a,a_1,\ldots,a_n)$ holds.
\end{lemma}

Instead of the basic language of the set theory, we use extended language of the set theory as we are used to (for a detailed discussion see e.g. \cite[Section 2]{crz}).
We shall also use the following convention.

\begin{convention}\hfil\\
$\bullet$ If $(X, +, \cdot, \| \cdot \|)$ is a normed linear space and $M$ is a suitable model (for some formulas containing some set), then by writing $X\in M$ (or by writing $\{X\}\subset M$) we mean that 
$X,\;+,\;\cdot,\;\text{\mbox{$\| \cdot \|$}} \in M$.\\
$\bullet$ If $X$ is a topological space and $M$ is a suitable model, then we denote by $X_M$ the set $\overline{X\cap M}$; clearly, the set $X_M$ is separable.
\end{convention}

Finally, we recall several further results about suitable models
(all the proofs are based on Lemma \ref{L:unique-M} and they can be found in \cite[Sections 2 and 3]{c}).

\begin{lemma}\label{l:basics-in-M}
There are a finite list of formulas $\Phi$ and a countable set $C$ such that any $M \prec (\Phi; C)$ satisfies the following:
\item{(i)}  Let $f$ be a function such that $f\in M$. Then, for every $x \in \operatorname{Dom} f \cap M$ we have $f(x) \in M$.

\item{(ii)} If $A,B\in M$, then $A\cap B\in M$, $B\setminus A\in M$ and $A\cup B\in M$.

\item{(iii)} If $S \in M$ is a countable set, then $S \subset M$.

\item{(iv)} If $X$ is a normed linear space and $X\in M$, then $X_M := \overline{X\cap M}$
is a (closed separable) linear subspace.
\end{lemma}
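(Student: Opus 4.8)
The plan is to prove Lemma~\ref{l:basics-in-M} by collecting, for each of the four assertions, the small finite list of formulas whose absoluteness (together with membership of the relevant parameters in $M$) forces the desired closure property; then $\Phi$ is the union of these lists and $C$ is a countable set large enough to contain the parameters that must lie in every $M\prec(\Phi;C)$ (in practice $C$ can even be taken empty, but it is harmless to allow it). In each case the mechanism is the same: one exhibits a formula $\phi(y,x_1,\ldots,x_n)$ such that ``$\exists y\colon\phi(y,\vec x)$'' is provable in ZFC (hence, once it and $\phi$ are absolute for $M$, Lemma~\ref{L:unique-M} yields a witness $a\in M$), and such that the witness is unique, so that the $a\in M$ produced by the lemma is exactly the object one wants.

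First I would handle (i). Take $\phi(y,f,x)$ to be the formula ``$(x,y)\in f$''. If $f$ is a function and $x\in\operatorname{Dom}f$ then $\exists y\colon (x,y)\in f$ holds, the witness is unique, and it equals $f(x)$; so with $f,x\in M$ and $\phi$, ``$\exists y\,\phi$'' absolute for $M$, Lemma~\ref{L:unique-M} gives $f(x)\in M$. For (ii) one does the same three times, with $\phi(y,A,B)$ expressing ``$y=A\cap B$'', then ``$y=B\setminus A$'', then ``$y=A\cup B$'' --- each of these is a formula in the extended language, each asserts the existence of a (provably existing, unique) set, so each membership $A\cap B,\;B\setminus A,\;A\cup B\in M$ follows once $A,B\in M$. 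For (iii): if $S\in M$ is countable, apply (i)-style reasoning to a surjection $g\colon\N\to S$. One has to be slightly careful: ``$S$ is countable'' gives the \emph{existence} of such a $g$, and by Lemma~\ref{L:unique-M} (applied to the formula ``$y$ is a surjection from $\N$ onto $S$'', whose existential closure holds) we may pick one such $g\in M$; we also need $\N\in M$, which is arranged by putting into $\Phi$ the (provably existentially true, uniquely witnessed) formula ``$y=\N$'' so that $\N\in M$ for every $M\prec(\Phi;C)$. Then $n\in M$ for every $n\in\N$ by the same trick applied to ``$y$ is the $n$-th element'' (or just because finite ordinals are definable), so (i) gives $g(n)=S\ni$ the $n$-th element $\in M$, i.e. $S\subset M$.

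Finally (iv): if $X=(X,+,\cdot,\|\cdot\|)\in M$ in the sense of the Convention, I would show $X\cap M$ is closed under the vector space operations, whence $\overline{X\cap M}$ is a closed linear subspace. Given $x,y\in X\cap M$, the element $x+y$ is $+(x,y)$, the value of the function $+\in M$ at $(x,y)$; since pairs of elements of $M$ are in $M$ --- this needs ``$y=(x_1,x_2)$'' in $\Phi$, again provably uniquely witnessed --- part (i) gives $x+y\in M$, and $x+y\in X$ trivially. Similarly $\lambda x=\cdot(\lambda,x)\in M$ for $\lambda\in\Q\cap M$ (and $\Q\in M$, $\Q\subset M$ by (iii) once ``$y=\Q$'' is in $\Phi$); closure under \emph{real} scalars then comes for free after taking the closure. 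Hence $X\cap M$ is a rational-linear subspace, $X_M:=\overline{X\cap M}$ is a closed linear subspace, and it is separable because $M$ is countable. The only genuinely delicate point --- the ``main obstacle'' --- is bookkeeping: making sure every auxiliary object invoked ($\N$, $\Q$, ordered pairs, the chosen surjections) is itself forced into $M$ by putting the right provably-uniquely-witnessed formula into the \emph{single} finite list $\Phi$, so that one countable $C$ and one $\Phi$ work simultaneously for (i)--(iv). This is routine given Theorem~\ref{T:countable-model} and Lemma~\ref{L:unique-M}, and the details are exactly those worked out in \cite[Sections 2 and 3]{c}.
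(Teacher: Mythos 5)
Your proposal is correct and follows exactly the route the paper intends: the paper gives no proof of this lemma but refers to \cite[Sections 2 and 3]{c}, noting that all parts follow from Lemma~\ref{L:unique-M} applied to suitable provably-witnessed formulas, which is precisely your argument. The only mildly delicate points you gloss over (getting each $n\in\N$ into $M$ via the successor formula rather than infinitely many definitions, and ensuring $X\cap M\neq\emptyset$ in (iv)) are routine and handled in the cited reference.
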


In the sequel we will need the following simple observation.

\begin{lemma}\label{l:dualInM}There are a finite list of formulas $\Phi$ and a countable set $C$ such that any $M \prec (\phi; C)$ satisfies the following:\\[5pt]
If $X$ is a Banach space with $X\in M$, then $X^*\in M$ and $\overline{X^*\cap M}$ is a subspace of $X^*$.
\end{lemma}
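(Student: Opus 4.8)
The plan is to mimic the proof of Lemma~\ref{l:basics-in-M}, relying on the fact that the operation $X\mapsto X^*$ (more precisely, the map sending the normed-space data $(X,+,\cdot,\|\cdot\|)$ to the dual-space data $(X^*,+,\cdot,\|\cdot\|_*)$, together with the canonical duality pairing $\langle\cdot,\cdot\rangle$) is definable by a formula $\psi(y,x)$ in the extended language of set theory: $y$ is the dual of $x$ iff $y$ consists of all bounded linear functionals on $x$, equipped with the pointwise-defined linear structure and the supremum norm. First I would write down such a formula $\psi$ explicitly, observe that $\exists y\colon\psi(y,x)$ is provable (the dual always exists and is unique up to the identifications built into $\psi$), and add both $\psi$ and $\exists y\,\psi(y,x)$ to the finite list $\Phi$. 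I would also include in $\Phi$ the formulas needed for Lemma~\ref{l:basics-in-M} to hold, and enlarge the countable set $C$ accordingly (by the Convention, $X\in M$ abbreviates ``$X,+,\cdot,\|\cdot\|\in M$'', so I must make sure all four components land in $M$, which is automatic once $M$ is closed under the relevant function applications).

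Then the argument runs as follows. Suppose $X$ is a Banach space with $X\in M$, i.e. $X,+,\cdot,\|\cdot\|\in M$. Since $\psi$ and $\exists y\,\psi(y,x)$ are absolute for $M$ and the (true) statement $\exists y\,\psi(y,X)$ holds, Lemma~\ref{L:unique-M} yields some $a\in M$ with $\psi(a,X)$; by definition of $\psi$ this $a$ is exactly (the data of) $X^*$, so $X^*\in M$. Here I must be slightly careful about what ``$X^*\in M$'' means under the Convention — it means that $X^*$ together with its addition, scalar multiplication, and norm all belong to $M$ — so $\psi$ should be chosen to package all of this (e.g. as an ordered tuple), and then one more appeal to Lemma~\ref{l:basics-in-M}(i), or to closure of $M$ under finite tuple-extraction, extracts the individual components into $M$.

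For the second assertion, that $\overline{X^*\cap M}$ is a (closed linear) subspace of $X^*$: this is immediate from Lemma~\ref{l:basics-in-M}(iv) applied to the normed linear space $X^*$, once we know $X^*\in M$ in the sense of the Convention. Indeed $X^*$ is a normed linear space and $X^*\in M$, so $X^*_M=\overline{X^*\cap M}$ is a closed separable linear subspace of $X^*$. I would simply cite Lemma~\ref{l:basics-in-M}(iv). Thus the only real content is the observation that duality is a definable, provably-total, essentially single-valued operation, so that one application of Lemma~\ref{L:unique-M} puts $X^*$ into $M$.

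The main obstacle — really a bookkeeping point rather than a genuine difficulty — is making the formula $\psi$ genuinely ``absolute-friendly'': the clause ``$y$ is the set of \emph{all} bounded linear functionals on $x$'' involves an unbounded quantifier over functions, so one must phrase $\psi$ in the extended language (quantifying over $\operatorname{Hom}(X,\R)$, cardinal invariants, etc., which the paper already licenses) in such a way that Theorem~\ref{T:countable-model} still applies; the standard treatment in \cite[Sections 2 and 3]{c} covers exactly this kind of formula, so I would invoke that framework rather than re-derive it. Once $\psi$ is set up, everything else is a one-line application of Lemma~\ref{L:unique-M} together with Lemma~\ref{l:basics-in-M}.
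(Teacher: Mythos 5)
Your proposal is correct and follows essentially the same route as the paper: it appends the formula ``$\exists X^*\colon (X^*$ is the dual space to $X)$'' (and its subformulas) to the list from Lemma~\ref{l:basics-in-M}, applies Lemma~\ref{L:unique-M} to conclude $X^*\in M$, and then invokes Lemma~\ref{l:basics-in-M}(iv) for the subspace claim. The extra bookkeeping you discuss about formalizing the dual as a definable operation in the extended language is exactly what the paper delegates to the framework of \cite{c} and \cite{crz}.
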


\begin{proof}Let us fix a finite list of formulas $\Psi$ and a countable set $C$  from the statement of Lemma \ref{l:basics-in-M}. Append to $\Psi$ the formula marked with $(*)$ below and all its subformulas.  Denote such an enhanced list of formulas by $\Phi$ and fix any $M \prec (\Phi; C)$ with $X\in M$. By Lemma \ref{L:unique-M}, using the absoluteness of the following formula (and its subformula)
\[\tag{$*$}
\exists X^* \colon\ (X^* \text{ is the dual space to }X),
\]
we get that $X^*\in M$. Now, by Lemma \ref{l:basics-in-M}, $\overline{X^*\cap M}$ is a subspace of $X^*$.
\end{proof}

Let us start with an analogue to Theorem \ref{dnoo}. Note that here we  do not assume that $X$ is Asplund.

\begin{theorem}\label{dnoo-modely} There are a finite list of formulas $\Phi$ and a countable set $C$ such that any $M \prec (\Phi; C)$ satisfies the following:\\[5pt]
Let $Z$ and $X$ be Banach spaces, let $T:Z\to X$ be a bounded linear operator, and let $z^*\in Z^*$ be given. Assume that $\{Z, X, T, z^*\}\subset M$. Then $T(Z_M)\subset X_M$ and, for every $x^*\in \overline{X^*\cap M}$, we have $\|T^*x^* - z^*\| = \|(T|_{Z_M})^*(x^*|_{X_M}) - (z^*|_{Z_M})\|$.
In particular, if $X$ is a Banach space with $X\in M$ then, for every $x^*\in \overline{X^*\cap M}$, we have $\|x^*\| = \|x^*|_{X_M}\|$.
\end{theorem}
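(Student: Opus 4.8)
\emph{The plan} is to argue by the method of suitable models, along the lines of---but considerably shorter than---the proof of Theorem~\ref{dnoo}. First I would fix the finite list of formulas $\Phi$: it should contain all the formulas from Lemmas~\ref{l:basics-in-M} and~\ref{l:dualInM} (so that $M$ is closed under the basic operations and $X\in M\Rightarrow X^*\in M$, and likewise for $Z$), together with (all subformulas of) the formulas ``$\exists S\colon S$ is the adjoint operator of $T$'', ``$\exists r\colon r=\|w^*\|$'', and ``$\exists z\in B_Z\colon \langle w^*,z\rangle>r-\varepsilon$'', the last ones with $Z,w^*,r,\varepsilon$ as free variables. For the countable set $C$ I would take the union of the countable sets supplied by those two lemmas together with $\Q$. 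Then fix any $M\prec(\Phi;C)$ and any Banach spaces $Z,X$, any bounded operator $T\colon Z\to X$ and any $z^*\in Z^*$ with $\{Z,X,T,z^*\}\subset M$.

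\emph{First step: $T(Z_M)\subset X_M$.} Since $T\in M$ is a function, Lemma~\ref{l:basics-in-M}(i) gives $T(z)\in M$ for every $z\in Z\cap M$, and clearly $T(z)\in X$; hence $T(Z\cap M)\subset X\cap M$, and by continuity $T(Z_M)=T(\overline{Z\cap M})\subset\overline{X\cap M}=X_M$.

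\emph{Second step: the norm identity for $x^*\in X^*\cap M$.} Using Lemma~\ref{l:dualInM} (and its proof) we have $X^*,Z^*\in M$ together with their linear and norm structure, and using Lemma~\ref{L:unique-M} with the adjoint formula we get $T^*\in M$; hence $w^*:=T^*x^*-z^*\in Z^*\cap M$ and $r:=\|w^*\|\in M$ by Lemma~\ref{l:basics-in-M}(i). Fix a rational $\varepsilon>0$. As $r=\sup\{\langle w^*,z\rangle\colon z\in B_Z\}$, the formula ``$\exists z\in B_Z\colon \langle w^*,z\rangle>r-\varepsilon$'' holds with all parameters in $M$, so by Lemma~\ref{L:unique-M} there is $z\in Z\cap M$ with $\|z\|\le1$ and $\langle w^*,z\rangle>r-\varepsilon$; such $z$ lies in $B_{Z_M}$, whence $\|w^*|_{Z_M}\|\ge r-\varepsilon$. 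Letting $\varepsilon\downarrow0$ yields $\|w^*|_{Z_M}\|=\|w^*\|$. On the other hand, for every $u\in Z_M$ the first step gives $Tu\in X_M$, so $\langle w^*,u\rangle=\langle x^*,Tu\rangle-\langle z^*,u\rangle=\langle x^*|_{X_M},(T|_{Z_M})u\rangle-\langle z^*|_{Z_M},u\rangle$, i.e. $w^*|_{Z_M}=(T|_{Z_M})^*(x^*|_{X_M})-(z^*|_{Z_M})$. Combining the two facts, $\|T^*x^*-z^*\|=\|(T|_{Z_M})^*(x^*|_{X_M})-(z^*|_{Z_M})\|$ for every $x^*\in X^*\cap M$.

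\emph{Third step and conclusion.} Both sides of this identity are $\|T\|$-Lipschitz functions of $x^*\in X^*$, so the identity passes to the closure $\overline{X^*\cap M}$. The ``in particular'' part then follows by applying what has just been proved with $Z:=X$, $T:=\mathrm{id}_X$ and $z^*:=0$---for which one only has to add to $\Phi$ two further evident formulas guaranteeing $\mathrm{id}_X\in M$ and $0_{X^*}\in M$ whenever $X\in M$---or, alternatively, one simply repeats the second step with $w^*:=x^*$. I do not expect a real obstacle here: the argument is deliberately light (this is the ``shorter and less technical'' route promised at the start of the section), and the only point requiring attention is the bookkeeping that ensures $T^*$, the dual norms, and the near-maximizers of $\langle w^*,\cdot\rangle$ on $B_Z$ all belong to $M$, which is automatic once Lemmas~\ref{l:basics-in-M}--\ref{l:dualInM} and~\ref{L:unique-M} are invoked.
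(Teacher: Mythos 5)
Your proposal is correct and follows essentially the same route as the paper: the key step in both is to use absoluteness (Lemma~\ref{L:unique-M}) to find a near-norming vector $z\in B_Z\cap M$ for the functional $T^*x^*-z^*$, and then to identify $(T^*x^*-z^*)|_{Z_M}$ with $(T|_{Z_M})^*(x^*|_{X_M})-z^*|_{Z_M}$ via $T(Z_M)\subset X_M$. The only (harmless) differences are bookkeeping: you place $T^*$ and the exact value $\|T^*x^*-z^*\|$ into $M$ and pass from $X^*\cap M$ to its closure by a Lipschitz-continuity argument, whereas the paper avoids this by working directly with $x^*\in\overline{X^*\cap M}$ through an approximant $x_0^*\in X^*\cap M$ and rational approximations $q$ of the norm inside the absolute formula.
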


\begin{proof}Let us fix a finite list of formulas $\Psi$ and a countable set $C$  from the statement of Lemma \ref{l:basics-in-M}. We may assume that $C\supset \Q$ for otherwise we replace it by $C\cup \Q$. 
Append to $\Psi$ the two formulas marked with $(*)$ below and all subformulas of them.  Denote such an
extended list of formulas by $\Phi$ and fix any $M \prec (\Phi; C)$ with $\{Z, X, T, z^*\}\subset M$. By Lemma \ref{l:basics-in-M}, $Z_M$ (resp. $X_M$) is a subspace of $Z$ (resp. $X$) and we have $T(Z\cap M)\subset X\cap M$; hence, $T(Z_M)\subset X_M$.

Fix $x^*\in \overline{X^*\cap M}$. It is obvious that $\|T^*x^* - z^*\| \geq \|(T|_{Z_M})^*(x^*|_{X_M}) - (z^*|_{Z_M})\|$. Let us fix $n\in\N$ and find $x_0^*\in X^*\cap M$ such that $\|x^*-x_0^*\|<\tfrac{1}{n\|T\|}$. Further, find $q\in\Q$ with $|\|T^*x^* - z^*\| - q| \leq \tfrac 1n$ and $z\in B_Z$ with $\|T^*x^* - z^*\|\leq |T^*x^*(z) - z^*(z)| + \tfrac 1n$. Then the following formula is true:
\[\tag{$*$}
\exists z\in Z :\  \big(\|z\|\leq 1\; \wedge\; q\leq |T^*x_0^*(z) - z^*(z)| + \tfrac 3n\big).
\]
Using Lemma~\ref{L:unique-M} and absoluteness of the preceding formula and its subformula for $M$, we find the corresponding $z \in Z\cap M$. Hence, $z\in B_{Z_M}$ and
\[
\|T^*x^* - z^*\|\leq q + \tfrac 1n\leq |T^*x_0^*(z) - z^*(z)| + \tfrac 4n\leq |T^*x^*(z) - z^*(z)| + \tfrac 5n\leq \|(T|_{Z_M})^*(x^*|_{X_M}) - (z^*|_{Z_M})\| + \tfrac 5n.
\]
As $n\in\N$ was arbitrary, we get $\|T^*x^* - z^*\|\leq \|(T|_{Z_M})^*(x^*|_{X_M}) - (z^*|_{Z_M})\|$.

The ``in particular'' part easily follows. Indeed, if $M$ is as above, we just observe that, by Lemma \ref{L:unique-M}, using the absoluteness of the following formula (and its subformulas)
\[\tag{$*$}
\exists \operatorname{id_X}:\;(\operatorname{id_X}:X\to X\text{ is a mapping defined by }
\operatorname{id}_X(x): = x,\ x\in X),
\]
we have that $\operatorname{id_X}\in M$ and we may use the result with $T:= \operatorname{id_X}$.
\end{proof}

The following statement is an analogue of the implication (ii)$\implies$(iii)
in Theorem \ref{dno}. This result was proved in \cite[Theorem 5.4]{crz}. We present here a proof using the notion of Asplund generator.

\begin{theorem}\label{dno-modely}
There are a finite list of formulas $\Phi$ and a countable set $C$ such that any $M \prec (\Phi; C)$ satisfies the following:\\[5pt]
Let $(X,\nn)$ be a (rather non-separable) Asplund space with $X\in M$. Then $\overline{X^*\cap M}\ni x^*\longmapsto x^*|_{X_M}\in (X_M)^*$ is an isometry {\tt onto} $(X_M)^*$.
\end{theorem}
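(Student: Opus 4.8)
The goal is to show that for a suitable model $M$ containing an Asplund space $X$, the restriction map $\overline{X^*\cap M}\ni x^*\mapsto x^*|_{X_M}\in (X_M)^*$ is a surjective isometry onto $(X_M)^*$. The isometry part is already handled by Theorem~\ref{dnoo-modely} (the ``in particular'' part), so the real content is \emph{surjectivity}. My strategy is to mimic the implication (ii)$\Longrightarrow$(iii) of Theorem~\ref{dno}: use an Asplund generator $G:\CC(X)\to\CC(X^*)$ (which exists by (i)$\Longrightarrow$(ii) of Theorem~\ref{dno}, since $X$ is Asplund), reflect it into $M$, and use the defining properties (a)--(c) of $G$ together with absoluteness to conclude that $\overline{X^*\cap M}$ is exactly the ``dual piece'' one needs.

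First I would arrange the list of formulas: start from the finite list $\Psi$ and countable set $C$ of Lemma~\ref{l:basics-in-M} (enlarging $C$ so that $C\supset\Q$), append the formulas needed for Lemma~\ref{l:dualInM} so that $X\in M$ forces $X^*\in M$ and $\overline{X^*\cap M}$ is a subspace, and then append the formula asserting the existence of an Asplund generator, namely ``$\exists G\colon (G\text{ is an Asplund generator in }X)$'', together with all its subformulas. Calling the resulting list $\Phi$, fix $M\prec(\Phi;C)$ with $X\in M$. By Lemma~\ref{L:unique-M} and absoluteness, there is an Asplund generator $G\in M$. Since $X\cap M$ is a countable set in $M$ (well, countable; and $X_M=\overline{X\cap M}$), one can feed countable subsets $C'\subset X\cap M$ lying in $M$ into $G$ and, by Lemma~\ref{l:basics-in-M}(i), have $G(C')\in M$; moreover the elements of $G(C')$ lie in $X^*\cap M$ by Lemma~\ref{l:basics-in-M}(iii). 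The key point is then: taking $C_0$ to be (a name in $M$ for) a countable dense subset of $X_M$ — this requires $X\cap M$ itself, or a countable dense subset of it, to be an element of $M$, which follows because $X\cap M$ is reflected via the separability machinery already used in the paper — property (a) of $G$ gives $(X_M)^*\subset\overline{G(C_0)|_{X_M}}$, and $G(C_0)\subset X^*\cap M$, so $(X_M)^*\subset\overline{(X^*\cap M)|_{X_M}}$.

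To upgrade this ``dense range'' conclusion to genuine surjectivity, I would invoke the isometry already in hand from Theorem~\ref{dnoo-modely}: for any $v^*\in(X_M)^*$ pick $x_n^*\in G(C_0)\subset X^*\cap M$ with $\|v^*-x_n^*|_{X_M}\|\to 0$; since $x_n^*\mapsto x_n^*|_{X_M}$ is isometric on $\overline{X^*\cap M}$, the sequence $(x_n^*)$ is Cauchy in $X^*$, converges to some $x^*\in\overline{X^*\cap M}$, and $x^*|_{X_M}=v^*$. This is exactly the Cauchy-completeness argument used in the proof of (ii)$\Longrightarrow$(iii) in Theorem~\ref{dno}, now transplanted into the model setting. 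The main obstacle I anticipate is bookkeeping about \emph{which countable sets are elements of $M$}: one must be careful that $G$, applied to a countable subset $C_0$ of $X\cap M$ \emph{that is itself a member of $M$}, yields $G(C_0)\in M$, and that such a dense $C_0\in M$ exists — this is the standard ``a countable set in $M$ is a subset of $M$, and $M$ sees a countable dense subset of any separable space it contains'' reflection, already encapsulated in Lemma~\ref{l:basics-in-M}, so I would lean on that lemma rather than re-derive it. A secondary subtlety is that property (a) of the Asplund generator refers to $\overline{\rm sp}\,C_0$, so I should take $C_0$ dense in $X_M$ and note $\overline{\rm sp}\,C_0=X_M$; after that the estimates are routine and I would not grind through them.
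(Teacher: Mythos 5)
Your overall route is exactly the paper's: reflect an Asplund generator $G$ into $M$ via the formula $\exists G\,(G\text{ is an Asplund generator in }X)$, use property (a) of $G$ to see that restrictions of elements of $X^*\cap M$ are dense in $(X_M)^*$, and then upgrade density to surjectivity using the isometry from Theorem~\ref{dnoo-modely} (the image of the complete space $\overline{X^*\cap M}$ under an isometry is closed). That last completeness step and the organization of the formula lists match the paper's proof.

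There is, however, one concrete step that fails as written: you want a countable dense subset $C_0$ of $X_M$ that is \emph{an element of} $M$, and you claim this ``follows from the separability machinery''. It does not. Lemma~\ref{l:basics-in-M}(iii) goes only in one direction (a countable set that \emph{is} an element of $M$ is a subset of $M$); it does not produce $X\cap M$, or any dense subset of $X_M$, as an element of $M$. Indeed, for uncountable $X$ one has $X\cap M\notin M$, and any countable $D\in M$ satisfies $D\subset X\cap M$ but need not be dense there, so no such $C_0$ exists in general. The repair --- which is what the paper actually does --- is to apply $G$ only to finite subsets $F$ of $X\cap M$ (these \emph{are} elements of $M$, so $G(F)\in M$ by Lemma~\ref{l:basics-in-M}(i) and $G(F)\subset X^*\cap M$ by (iii)), and then invoke property (b) of the Asplund generator to get $G(X\cap M)=\bigcup_F G(F)\subset X^*\cap M$; property (a) is then applied to the countable set $X\cap M$ \emph{externally}, which is legitimate because (a) is a statement about arbitrary countable subsets of $X$ and does not require $X\cap M$ to be a member of $M$. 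With that substitution your argument goes through and coincides with the paper's.
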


\begin{proof}Let us fix finite lists of formulas $\Psi$, $\widetilde{\Psi}$ and countable sets $C$, $\widetilde{C}$ from the statements of Lemma \ref{l:basics-in-M} and Theorem \ref{dnoo-modely}, respectively. 
Let $\Phi$ be the list of formulas consisting of formulas from $\Psi$, $\widetilde{\Psi}$, 
the formula marked with $(*)$ below and all its subformulas. Fix any $M \prec (\Phi; C\cup\widetilde C)$,
with $M\ni X$. By Lemma \ref{l:basics-in-M}, $X_M$ is a (closed separable) subspace of $X$. By Theorem \ref{dno} (i)$\implies$(ii), \[\tag{$*$}
\exists G \colon (G\text{ is an Asplund generator in $X$}).
\]
Using the absoluteness of this formula (and its subformula) for $M$, by Lemma \ref{L:unique-M}, we have an Asplund generator $G$ in $X$ with $G\in M$. For every $x\in X\cap M$, by Lemma \ref{l:basics-in-M} (i) (iii), we have $G(x)\subset X^*\cap M$; thus, $G(X\cap M)\subset X^*\cap M$. Hence,
$$(X_M)^* \stackrel{\ref{generator} (a)}{\subset} \overline{G(X\cap M)|_{X_M}}\subset \overline{(X^*\cap M)|_{X_M}} \subset (X_M)^*$$
and we get that $\overline{(X^*\cap M)|_{X_M}} = (X_M)^*$. It is always true that $\overline{(X^*\cap M)|_{X_M}}\supset \overline{X^*\cap M}|_{X_M}$ and the other inclusion follows using the fact that, by Theorem \ref{dnoo-modely}, $\overline{X^*\cap M}\ni x^*\longmapsto x^*|_{X_M}$ is an isometry.
\end{proof}

Finally, the following statement is an analogue of Theorem \ref{nova}. (Note that here we  do not assume that $X$ is Asplund.)

\begin{theorem}\label{nova-modely} There are a finite list of formulas $\Phi$ and a countable set $C$ such that any $M \prec (\Phi; C)$ satisfies the following:\\[5pt]
Let $X$ be a Banach spaces and $f:X\longrightarrow(-\infty,\infty]$ be a proper function with $\{X, f\}\subset M$. Then for every $x\in X_M$, with $f(x) < +\infty$, and every $x^*\in \overline{X^*\cap M}$, we have
$$
x^*\in \partial_F f(x)\; \Longleftrightarrow\; x^*|_{X_M}\in \partial_F (f|_{X_M})(x).
$$
\end{theorem}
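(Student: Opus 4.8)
The plan is to prove the equivalence by the same two-sided estimate strategy used in Theorem~\ref{nova}, but now encoded through absoluteness rather than through a rich family. I would first fix the list of formulas $\Psi$ and the countable set $C$ coming from Lemma~\ref{l:basics-in-M}, enlarge $C$ to contain $\Q$, and append to $\Psi$ the (finitely many) formulas asserting the existence, for suitable corteges $(x,x^*,q,\dd_1,\dd_2)$ with rational $q,\dd_1,\dd_2$, of an ``almost-infimizing'' vector $h$ realizing the quantity $I(x,x^*,q,\dd_1,\dd_2)$ to within a rational error $\gg$ --- precisely the analogue of (\ref{16}) --- together with all their subformulas. This is the step I expect to be the main obstacle: one has to write down a \emph{single finite} list of first-order formulas (in the extended language, so that $f$, the norm, the duality pairing, etc.\ are available as function symbols) whose absoluteness for a countable $M$ containing $\{X,f\}$ suffices to reflect all the countably many ``witness'' choices that the rich-family proof made by hand. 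The trick is that quantification over the rational parameters $q,\dd_1,\dd_2,\gg$ stays \emph{inside} one formula of the shape ``$\forall q\,\forall\dd_1\,\forall\dd_2\,\forall\gg\ \bigl(\,(\text{rationality and }0<\dd_1<\dd_2)\to\exists h\colon(\cdots)\bigr)$'', so a bounded finite list does the job.

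Granting such an $M$, the forward implication $x^*\in\partial_F f(x)\Rightarrow x^*|_{X_M}\in\partial_F(f|_{X_M})(x)$ is immediate and needs no model: restricting the defining inequality to increments $h\in X_M$ only shrinks the set of $h$'s being tested. The content is the converse. So assume $x^*\in\overline{X^*\cap M}$, $x\in X_M$ with $f(x)<+\infty$, and $x^*|_{X_M}\in\partial_F(f|_{X_M})(x)$. Given $\ee>0$, pick rational $\dd>0$ with $I_{X_M}(x,x^*|_{X_M},f(x),\dd_1,\dd)\ge-\ee$ for all small rational $\dd_1$. Fix an arbitrary $h\in X$ with $0<\|h\|<\dd$ and rationals $\dd_1<\dd_1'<\|h\|<\dd_2'<\dd$. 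The core estimate is then exactly the chain (\ref{81})--(\ref{18}) from the proof of Theorem~\ref{nova}: approximate $x$ by $x_i\in X\cap M$ and $x^*$ by $x^*_i\in X^*\cap M$ (possible since $x\in X_M=\overline{X\cap M}$ and $x^*\in\overline{X^*\cap M}$), pick $q\in\Q$ near $f(x)$, and use that for $(x_i,x^*_i,q,\dd_1',\dd_2',\gg)$ with all parameters rational the witnessing vector $h(x_i,x^*_i,q,\dd_1',\dd_2',\gg)$ \emph{lies in $X\cap M\subset X_M$} --- this is precisely where absoluteness of the appended formula is invoked via Lemma~\ref{L:unique-M}, because the parameters $x_i,x^*_i,q,\dd_1',\dd_2',\gg$ all belong to $M$. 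Consequently $I(x_i,x^*_i,q,\dd_1',\dd_2')\ge I_{X_M}(x_i,x^*_i,q,\dd_1',\dd_2')-\gg$, and feeding this through the two displayed estimates and letting $i\to\infty$ along the relevant co-finite index set yields
$$
\frac1{\|h\|}\bigl(f(x+h)-f(x)-\la x^*,h\ra\bigr)\ \ge\ I_{X_M}\bigl(x,x^*|_{X_M},f(x),\dd_1,\dd\bigr)-3\gg-\gg\tfrac{\dd}{\dd_1'}\ \ge\ -\ee-3\gg-\gg\tfrac{\dd}{\dd_1'}.
$$
Since $\gg\in\Q_+$ was arbitrary, the right-hand side can be made $\ge-\ee$ uniformly in $h$, which gives $x^*\in\partial_F f(x)$.

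Two bookkeeping points complete the argument. First, to even speak of $I_{X_M}$ and to know $X_M$ is a closed subspace one uses Lemma~\ref{l:basics-in-M}(iv); and to approximate $x^*$ one uses that $\overline{X^*\cap M}$ makes sense, which is Lemma~\ref{l:dualInM} (so $X^*\in M$ must be forced, i.e.\ the formula $(*)$ of Lemma~\ref{l:dualInM} and its subformulas are also appended to $\Phi$). Second, the estimate uses rationality of $q,\dd_1',\dd_2',\gg$ only to be able to quote the single appended ``witness'' formula; the passage from a rational $q$ near $f(x)$ to $f(x)$ itself, and from rational $\dd_1',\dd_2'$ to the honest radii around $\|h\|$, is absorbed into the $\gg$-terms exactly as in (\ref{81}) and (\ref{84}). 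The only genuinely new ingredient relative to Theorem~\ref{nova} is thus the replacement of ``choose $V_m\times Y_m\in\AA$ large enough to contain the countable set of witnesses'' by ``the witnesses already lie in $M$ by absoluteness'' --- which is shorter precisely because $M$ is a \emph{single} countable set closed under the relevant operations, so no transfinite/countable exhaustion is needed.
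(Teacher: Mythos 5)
Your strategy is workable and would yield the theorem, but it is genuinely different from --- and noticeably heavier than --- the route the paper takes. You transplant the whole apparatus of Theorem~\ref{nova} into the model setting: the infimum functionals $I$ and $I_{X_M}$, the almost-minimizing witnesses of (\ref{16}) (now produced inside $M$ by absoluteness instead of being inserted by hand into a growing rectangle), and the chains of estimates (\ref{81})--(\ref{18}); in effect you prove the full analogue of condition (\ref{17}) with $V=X_M$ and $Y=\overline{X^*\cap M}$, which is more than the stated equivalence requires. The paper instead proves the \emph{contrapositive} of the nontrivial implication by a one-shot perturbation: if $x^*\notin\partial_Ff(x)$, there is $\ee\in\Q_+$ and, for each $\dd$, a ``bad'' increment $h$ with $f(x+h)-f(x)-\la x^*,h\ra\le-\ee\|h\|$; replacing $(x,x^*,f(x))$ by nearby data $(x_0,x_0^*,q')$ from $M$ and translating $h$ by $x-x_0$ turns the existence of such a bad increment into a formula with all parameters in $M$, absoluteness supplies a witness $h'\in X\cap M$, and translating back by $x_0-x$ gives a bad increment $h''\in X_M$ witnessing $x^*|_{X_M}\notin\partial_F(f|_{X_M})(x)$. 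That argument needs no infima, no co-finite index sets, and no double limit in $i$ and $\gg$. If you do carry out your version, two details need care: the value $I(x_i,x^*_i,q,\dd_1',\dd_2')$ occurring as a parameter of your witness formula is a real number not a priori in $M$, so you must either put the function $I$ itself into $M$ (via a definability formula and Lemma~\ref{l:basics-in-M}(i)) or quantify over rational thresholds $t>I(\cdots)$ inside the formula; and you must handle the case $I(x_i,x^*_i,q,\dd_1',\dd_2')=-\infty$, where the witness of (\ref{16}) is undefined --- this can be excluded using (\ref{36}) together with arbitrarily negative rational thresholds, but it is precisely the bookkeeping that the paper's contrapositive argument sidesteps.
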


\begin{proof}Let us fix a finite list of formulas $\Psi$ and a countable sets $C$  from the statement of Lemma \ref{l:basics-in-M}. We may assume that $C\supset \Q$ for otherwise we replace it by $C\cup \Q$. 
Enhance the list $\Psi$ by the formula marked with $(*)$ below and by all subformulas of it.  Denote such an
augmented list by $\Phi$ and fix any $M \prec (\Phi; C)$ with $\{X, f\}\subset M$. By Lemma \ref{l:basics-in-M}, $X_M$ is a subspace of $X$.

Fix any $x\in X_M$ with $f(x) < \infty$ and $x^*\in \overline{X^*\cap M}$. Obviously, $x^*|_{X_M}\in \partial_F (f|_{X_M})(x)$ whenever $x^*\in \partial_F f(x)$. In order to prove the other implication, let us assume that $x^*\notin \partial_F f(x)$. Then there exists $\varepsilon\in\Q_+$ such that
\begin{equation}\label{eq:frechet-subdif-eps}
\forall\delta\in\Q_+\; \exists h\in B(0,\delta)\setminus\{0\}:\; f(x+h) - f(x) - x^*(h)\leq -\varepsilon\|h\|.
\end{equation}
In order to see that $x^*|_{X_M}\notin \partial_F (f|_{X_M})(x)$, we will verify that
\begin{equation}\label{eq:frechet-subdif-eps3}
\forall\delta\in\Q_+\; \exists h''\in B(0,\delta)\cap X_M\setminus\{0\}:\; f(x+h'') - f(x) - x^*(h'') < -\tfrac{\varepsilon}{3}\|h''\|.
\end{equation}
Fix $\delta\in\Q_+$. By \eqref{eq:frechet-subdif-eps}, there exists $0\neq h\in B(0,\tfrac{\delta}{3})$ such that
\begin{equation}\label{eq:frechet-subdif-f}
f(x+h) - f(x) - x^*(h)\leq -\varepsilon\|h\|.
\end{equation}
Find $q\in(0,\|h\|)\cap \Q$. Now, fix $\eta\in\Q_+$ such that $\eta < \min\{\tfrac{\delta}{6},\tfrac{q}{2},\tfrac{q\varepsilon}{3(1+\|x^*\|+\delta/2+4\varepsilon/3)}\}$. Take $q'\in\Q$ with $|f(x) - q'| \leq \eta$, $x_0\in B(x,\eta)\cap M$ and $x_0^*\in B(x^*,\eta)\cap M$. Put $h' = h + (x - x_0)$. Then
\[
q - \eta < \|h\| - \eta \leq \|h'\|\leq \|h\| + \eta\leq \tfrac{\delta}{3} + \eta < \tfrac{\delta}{2},
\]
and
\begin{align*}
f(x_0 + h') - q' - x_0^*(h')  & \leq f(x + h) - f(x) - x^*(h) + \eta + \eta\|h\| + \eta\|x_0^*\| \\
& \!\! \stackrel{\eqref{eq:frechet-subdif-f}}{\leq} -\varepsilon\|h\| + \eta(1+\tfrac{\delta}{3} + \|x^*\| + \eta) \\
& < -\varepsilon\|h'\| + \eta(1 + \tfrac{\delta}{3} + \|x^*\| +\tfrac{\delta}{6}+ \varepsilon) \\
& = -\varepsilon\|h'\| + \eta(1 + \tfrac{\delta}{2} + \|x^*\| + \tfrac{4}{3}\varepsilon) - \tfrac{\varepsilon}{3}\eta \\
& < -\varepsilon\|h'\| + \tfrac{\varepsilon}{3}(q -  \eta) < -\varepsilon\|h'\| + \tfrac{\varepsilon}{3}\|h'\| = -\tfrac{2}{3}\varepsilon\|h'\|.
\end{align*}
Hence, the following formula is true:
\[\tag{$*$}
\exists h'\in X : (q - \eta < \|h'\| < \delta/2\quad \wedge\quad f(x_0 + h') - q' - x_0^*(h') < -\tfrac{2}{3}\varepsilon\|h'\|).
\]
Using Lemma~\ref{L:unique-M} and absoluteness of the preceding formula and its subformula for $M$, we find $h' \in X\cap M$ satisfying the formula above. Let us notice, that this $h'$ need not satisfy $h' = h + (x - x_0)$. Put $h'' = h' + (x_0 - x)$. Then $h''\in B(0,\delta)\cap X_M\setminus\{0\}$ and
\begin{align*}
f(x + h'') - f(x) - x^*(h'') & \leq f(x_0 + h') - q' - x_0^*(h') + \eta + \eta\|h'\| + \eta\|x^*\| \\
& < -\tfrac{2}{3}\|h'\|\varepsilon + \eta(1 + \tfrac{\delta}{2} + \|x^*\|) \\
& < -\tfrac{2}{3}\|h''\|\varepsilon + \eta(1 + \tfrac{\delta}{2}+ \|x^*\| + \tfrac{2}{3}\varepsilon)\\
& = -\tfrac{2}{3}\|h''\|\varepsilon + \eta(1 + \tfrac{\delta}{2} + \|x^*\| + \tfrac{4}{3}\varepsilon) - \tfrac{2}{3}\eta\varepsilon\\
& < -\tfrac{2}{3}\|h''\|\varepsilon + \tfrac{\varepsilon}{3}(q - 2\eta) < -\tfrac{2}{3}\|h''\|\varepsilon + \tfrac{\varepsilon}{3}(\|h'\| - \eta) \\
& \leq -\tfrac{2}{3}\|h''\|\varepsilon + \tfrac{\varepsilon}{3}\|h''\| \\
& = -\tfrac{\varepsilon}{3}\|h''\|.
\end{align*}
We have verified that \eqref{eq:frechet-subdif-eps3} holds which concludes the proof.
\end{proof}

\begin{remark}\rm One can compare the proof above with a similar proof using rich families. Note that the formula marked by $(*)$ above is similar to the formula which is needed in the inductive construction of the corresponding rich family; see \eqref{16} and the definition of $\widetilde{C}$ in the proof of Theorem \ref{nova}. When using the method of suitable models, this inductive construction is hidden in Theorem \ref{T:countable-model}.

More precisely, the proof of the theorem above says that, whenever a set $M$ is constructed in such a way that it is ``closed under formula $(*)$'' (and maybe also under some formulas needed in Lemmas \ref{L:unique-M} and \ref{l:basics-in-M}), then $\overline{X\cap M}$ will give us the needed separable subspace. And Theorem \ref{T:countable-model} guarantees that such $M$ can be constructed.
\end{remark}

Now, we get the following analogy of Theorem \ref{princip} as a combination of the previous results.

\begin{theorem}\label{princip-modely}There are a finite list of formulas $\Phi$ and a countable set $C$ such that any $M \prec (\Phi; C)$ satisfies the following:\\[5pt]
Let $k\in \N$, let $X$ be a non-separable {\tt Asplund} space, let $Z_1,\ldots, Z_k$ be Banach spaces, 
let $z^*_1\in Z^*_1,\ \ldots, z^*_k\in Z^*_k$, let $T_i: Z_i\to X,\ i=1,\ldots,k$,  
be bounded linear operators, and let $f:X\longrightarrow(-\infty,+\infty]$ be a proper function. Assume that $\{T_1,\ldots,T_k, f, Z_1, \ldots, Z_k, z^*_1, \ldots z^*_k, X\}\subset M$. Then $T_1((Z_1)_M)\subset X_M,\ \ldots,\ T_k((Z_k)_M)\subset X_M$, $\overline{X^*\cap M}$ is a (closed separable) subspace of $X^*$ and the following holds.
\item{(i)} The assignment $\overline{X^*\cap M}\ni x^*\longmapsto x^*|_{X_M}\in (X_M)^*$ is an isometry {\tt onto} $(X_M)^*$.
\item{(ii)} 
$
\big(\partial_Ff(x)\cap \overline{X^*\cap M}\big)|_{X_M}=\big(\partial_Ff(x)\big)|_{X_M}=\partial_F(f|_{X_M})(x)\,
$ for every $x\in X_M$; and
\item{(iii)} 
$\|T^*_ix^* - z^*_i\| = \|(T_i|_{(Z_i)_M})^*(x^*|_{X_M}) - (z^*_i|_{(Z_i)_M})\|$ for every $x^*\in \overline{X^*\cap M}$ and $i = 1,\ldots k$.
\end{theorem}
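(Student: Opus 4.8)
The plan is to assemble Theorem~\ref{princip-modely} by intersecting the finite lists of formulas supplied by the three preceding ``model'' theorems, exactly in the spirit of the rich-family proof of Theorem~\ref{princip}. First I would collect the finite list $\Psi_0$ and countable set $C_0$ from Lemma~\ref{l:basics-in-M}, the list $\widetilde\Psi$, $\widetilde C$ from Theorem~\ref{dnoo-modely}, the list from Theorem~\ref{dno-modely}, and the list from Theorem~\ref{nova-modely}; I would take $\Phi$ to be the union of all these lists (together with any subformulas already bundled into them) and $C$ to be the union of the corresponding countable sets, enlarged by $\Q$ if needed. Since a suitable model for a larger list of formulas is in particular suitable for each sublist, any $M\prec(\Phi;C)$ with $\{T_1,\ldots,T_k,f,Z_1,\ldots,Z_k,z^*_1,\ldots,z^*_k,X\}\subset M$ simultaneously satisfies the conclusions of Lemma~\ref{l:basics-in-M}, Theorem~\ref{dnoo-modely}, Theorem~\ref{dno-modely}, and Theorem~\ref{nova-modely} for the relevant objects.

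Next I would read off the three conclusions. By Lemma~\ref{l:dualInM} (or directly Lemma~\ref{l:basics-in-M} applied after noting $X^*\in M$), $\overline{X^*\cap M}$ is a closed separable subspace of $X^*$, and by Lemma~\ref{l:basics-in-M}(iv) each $(Z_i)_M$ and $X_M$ are closed separable subspaces. Applying Theorem~\ref{dnoo-modely} to each operator $T_i:Z_i\to X$ with the functional $z^*_i$ — which is legitimate because $\{Z_i,X,T_i,z^*_i\}\subset M$ — gives both $T_i((Z_i)_M)\subset X_M$ and the isometric identity $\|T^*_ix^*-z^*_i\|=\|(T_i|_{(Z_i)_M})^*(x^*|_{X_M})-(z^*_i|_{(Z_i)_M})\|$ for every $x^*\in\overline{X^*\cap M}$, which is precisely (iii). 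Since $X$ is Asplund, Theorem~\ref{dno-modely} yields that $\overline{X^*\cap M}\ni x^*\mapsto x^*|_{X_M}$ is a surjective isometry onto $(X_M)^*$, which is (i). Finally Theorem~\ref{nova-modely} gives, for every $x\in X_M$ with $f(x)<+\infty$ and every $x^*\in\overline{X^*\cap M}$, the equivalence $x^*\in\partial_F f(x)\iff x^*|_{X_M}\in\partial_F(f|_{X_M})(x)$.

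It remains to convert that equivalence into the chain of equalities in (ii), namely $(\partial_F f(x)\cap\overline{X^*\cap M})|_{X_M}=(\partial_F f(x))|_{X_M}=\partial_F(f|_{X_M})(x)$. The inclusions $(\partial_F f(x)\cap\overline{X^*\cap M})|_{X_M}\subset(\partial_F f(x))|_{X_M}\subset\partial_F(f|_{X_M})(x)$ are trivial (the second because restricting any $x^*\in\partial_F f(x)$ to $X_M$ only adds vectors $h\in X_M$, a subset of the $h\in X$ already tested). For the reverse inclusion $\partial_F(f|_{X_M})(x)\subset(\partial_F f(x)\cap\overline{X^*\cap M})|_{X_M}$, take $v^*\in\partial_F(f|_{X_M})(x)$; by the surjective isometry (i) there is a (unique) $x^*\in\overline{X^*\cap M}$ with $x^*|_{X_M}=v^*$; then by Theorem~\ref{nova-modely} we get $x^*\in\partial_F f(x)$, and since $x^*\in\overline{X^*\cap M}$ we conclude $v^*=x^*|_{X_M}\in(\partial_F f(x)\cap\overline{X^*\cap M})|_{X_M}$. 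When $f(x)=+\infty$ all three sets are empty and there is nothing to prove.

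I do not expect any genuine obstacle here: the theorem is a pure ``glueing'' statement, and every nontrivial estimate has already been carried out in Theorems~\ref{dnoo-modely}, \ref{dno-modely}, and~\ref{nova-modely}. The only point requiring a little care is bookkeeping — making sure the single list $\Phi$ contains all the formulas (and their subformulas) needed by each of the invoked lemmas and theorems, and that the countable seed $C$ contains $\Q$ and the seeds of all those results — so that one fixed $M$ does the work of all of them at once. This is exactly the same mechanism as combining rich families via Proposition~\ref{bm} in the proof of Theorem~\ref{princip}, now transported to the language of suitable models.
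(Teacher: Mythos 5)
Your proposal is correct and follows essentially the same route as the paper: take the union of the formula lists and seed sets from Lemma~\ref{l:basics-in-M}, Lemma~\ref{l:dualInM}, and Theorems~\ref{dno-modely}, \ref{dnoo-modely}, \ref{nova-modely}, read off (i) and (iii) directly, and obtain the nontrivial inclusion in (ii) by lifting $y^*\in\partial_F(f|_{X_M})(x)$ via the surjective isometry of (i) and then applying Theorem~\ref{nova-modely}. The only (harmless) extra touch is your explicit remark about the case $f(x)=+\infty$, which the paper leaves implicit.
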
 

\begin{proof}Let us fix a finite list of formulas $\Phi$ and a countable set $C$  which consist of all the formulas (sets) from the statements of Lemmas \ref{l:basics-in-M}, \ref{l:dualInM} and Theorems \ref{dno-modely}, \ref{dnoo-modely}, \ref{nova-modely} and fix any $M \prec (\Phi; C)$ with $\{T_1,\ldots,T_k, f, Z_1, \ldots, Z_k, z^*_1, \ldots z^*_k, X\}\subset M$. By Theorem \ref{dno-modely}, we have (i) and, by Lemma \ref{l:dualInM} and Theorem \ref{dnoo-modely}, it remains to verify (ii). Fix any $x\in X_M$ .  We obviously have that 
$$
\big(\partial_Ff(x)\cap \overline{X^*\cap M}\big)|_{X_M} \subset \big(\partial_Ff(x)\big)|_{X_M} \subset \partial_F(f|_{X_M})(x).
$$
It remains to prove that $\partial_F(f|_{X_M})(x) \subset \big(\partial_Ff(x)\cap \overline{X^*\cap M}\big)|_{X_M}$. If $y^*\in \partial_F(f|_{X_M})(x)$ we have, by (i), $x^*\in \overline{X^*\cap M}$ with $y^* = x^*|_{X_M}$ and, by Theorem \ref{nova-modely}, $x^*\in \partial_Ff(x)$; hence, $y^* = x^*|_{X_M}\in  \big(\partial_Ff(x)\cap \overline{X^*\cap M}\big)|_{X_M}$.
\end{proof}

\begin{remark}\rm Observe that Theorem \ref{princip-modely} actually provides a cofinal family of separable subspaces of $X$ satisfying Theorem \ref{princip}. (Indeed, it is enough to use Theorem \ref{T:countable-model} which says that, for every countable set $S$, it is possible to construct the set $M$ in such a way that $S\subset M$.) Using Theorem \ref{t:richIffModel} from the next section, we can even deduce the existence of a rich family of separable subspaces of $X$ with such properties. However, in some applications, we do not need to know a family is rich (cofinality is enough). The property of $\sigma$-closeness is usually needed if we want to combine several 
(but not more than countably many) statements together; see Proposition \ref{bm}. When using the method of suitable models, we are able to combine finitely many statements by putting together finitely many  lists of formulas.
\end{remark}

\section{Rich families versus suitable models}\label{section5}

As mentioned before, there are two methods of proving separable reduction theorems. One is ``the method of rich families'' and the other one is ``the method of suitable models''. 
The relation between those two methods was investigated in \cite{ck}, where the authors proved that, if there is a proof using the method of rich families, there is also a proof using the method of suitable models. In many cases those two methods are equivalent; however, it is not known to the authors whether they are equivalent in general.

In this section we prove that both methods are in some sense equivalent in Asplund spaces and in the spaces isomorphic to the space $C(K)$ of continuous functions over some zero-dimensional compact space $K$; see Theorem \ref{t:richIffModel}. (Recall that a compact space is zero-dimensional if it has a basis consisting of clopen sets --- i.e. sets which are both closed and open.) This gives a partial positive answer to \cite[Question 2.8]{ck} and \cite[Question 3.6]{ck}.

The main reason for investigating such a relation between those two methods is that it gives mathematicians the freedom to use the method they prefer and make the results applicable by another mathematician preferring the other method. For example, once we know those two methods are equivalent, we can prove a certain statement using the method of suitable models and formulate the resulting theorem in the language of rich families --- hence not using any terminology from logic or set theory and making it more applicable by non experts. We refer a reader to the text after Theorem \ref{t:richIffModel}, where we show a concrete example of how one may deduce (i)$\implies$(iii) in Theorem \ref{dno} from Theorem \ref{dno-modely} and how to deduce a slightly weaker version of Theorem \ref{dno-modely} from (i)$\implies$(iii) in Theorem \ref{dno}.

In order to compare both methods of separable reduction, we use the definition from \cite[Section 3]{ck}. Since it is quite long and intuitively clear notion and we will actually not use the definition here, we do not explicitly articulate it. However, what we will use is the following notion.

\begin{definition}\label{d:richNice}\rm Let $X$ be a Banach space. We say that \emph{suitable models generate nice rich families in $X$}, if the following holds:\\[3pt]
Whenever $Y$ is a countable set and $\Phi$ is a finite list of formulas, there exists a family $\M$ satisfying the following conditions:
\begin{enumerate}[\upshape (i)]
	\item For every $M\in\M$, $M\prec(\Phi;Y)$.
	\item The set $\{X_M:\; M\in\M\}$ is a rich family of separable subspaces in $X$.
	\item $\forall M,N\in\M,\quad M\subset N \Longleftrightarrow \overline{X\cap M}\subset \overline{X\cap N}$.
\end{enumerate}
\end{definition}

The following result follows immediately from the proof of \cite[Theorem 3.2]{ck}; for the definitions of notions used in (i) and (ii) below we refer to \cite[Section 3]{ck}.

\begin{theorem}\label{tRichAModel}Let $X$ be a Banach space, $A\subset X$ and $f$ a function with $\operatorname{Dom}(f)\subset X$. Let $\phi(X,A,f,C_1,\ldots,C_k)$ be a statement concerning the Banach space $X$, the set $A$, the function $f$ and constants $C_1,\ldots,C_k$. Consider the following conditions\vspace{1mm}
	\item {(i)} $\phi(X,A,f,C_1,\ldots,C_k)$ is separably determined by the method of suitable models.
	\item {(ii)} $\phi(X,A,f,C_1,\ldots,C_k)$ is separably determined by the method of rich families.\vspace{1mm}
\noindent Then (ii) implies (i). Moreover, if suitable models generate nice rich families in $X$, then both conditions are equivalent.
\end{theorem}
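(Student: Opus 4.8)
The plan is to read off both parts from the framework of \cite{ck}, handling the two implications separately, since only one of them is unconditional.

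\textbf{The implication (ii)$\Longrightarrow$(i).} This is exactly the content of \cite[Theorem 3.2]{ck}, and for the present formulation it suffices to quote the \emph{proof} of that theorem. Concretely, suppose $\phi(X,A,f,C_1,\ldots,C_k)$ is separably determined by the method of rich families. One takes the finite list of formulas $\Phi$ produced in \cite[Theorem 3.2]{ck}, enlarging it (this keeps it finite) by the formulas of Lemma \ref{l:basics-in-M} and Lemma \ref{l:dualInM} and by the formulas forcing $X,A,f,C_1,\ldots,C_k$ and the relevant auxiliary objects to belong to the model. Then, for any $M\prec(\Phi;Y)$ with $Y$ a countable set containing these data, one checks via absoluteness and Lemma \ref{L:unique-M} that $X_M=\overline{X\cap M}$ is a separable subspace (Lemma \ref{l:basics-in-M}(iv)) which witnesses the separable reduction, i.e. $\phi(X,A,f,C_1,\ldots,C_k)$ holds iff its restricted version at $X_M$ holds. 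Nothing in this argument uses any special structure of $X$, so this implication is unconditional; it is the verbatim translation of \cite[Theorem 3.2]{ck}.

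\textbf{The implication (i)$\Longrightarrow$(ii) under the extra hypothesis.} Assume now that suitable models generate nice rich families in $X$ (Definition \ref{d:richNice}), and that $\phi$ is separably determined by the method of suitable models, witnessed by a finite list $\Phi_0$. Enlarge $\Phi_0$ to a finite list $\Phi$ that also contains the formulas of Lemma \ref{l:basics-in-M} and those guaranteeing $X,A,f,C_1,\ldots,C_k$ lie in the model, and choose a countable $Y\ni X,A,f,C_1,\ldots,C_k$ (together with whatever countable data the \cite{ck} framework requires). Apply the hypothesis to $\Phi$ and $Y$: we obtain a family $\M$ with $M\prec(\Phi;Y)$ for each $M\in\M$, such that $\RR:=\{X_M:\ M\in\M\}$ is a rich family of separable subspaces of $X$ and $M\subset N\Leftrightarrow X_M\subset X_N$. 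For every $M\in\M$, since in particular $M\prec(\Phi_0;Y)$, the witnessing property from (i) gives that $\phi(X,A,f,C_1,\ldots,C_k)$ holds iff its restricted version at $X_M$ holds; thus $\RR$ is a rich family through which $\phi$ is separably reduced, and the monotone-correspondence clause (iii) of Definition \ref{d:richNice} supplies exactly the compatibility between inclusion of models and inclusion of subspaces that the \cite{ck} notion of ``separably determined by the method of rich families'' asks for. This yields (ii), hence the equivalence.

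\textbf{Main obstacle.} I expect no genuine mathematical difficulty beyond bookkeeping: one must check that the assembled list of formulas stays finite and really forces all the structural facts ($X_M$ a subspace, $f|_{X_M}$ and $A\cap X_M$ behaving correctly, membership of the constants) that are used silently, and that the unspelled definition of ``separably determined by the method of rich families'' from \cite[Section 3]{ck} matches the conclusion obtained above. The only place where the argument cannot be pushed through unconditionally is the step ``$\RR$ is rich'': a single application of Theorem \ref{T:countable-model} produces one suitable model, hence $\sigma$-completeness (closedness under increasing unions) of the associated family of subspaces is not automatic --- it is precisely this that the hypothesis ``suitable models generate nice rich families in $X$'', through clause (iii) of Definition \ref{d:richNice}, is designed to deliver, and it is why the reverse implication is only conditional. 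Once the definitions are unwound, both implications are immediate from \cite{ck}.
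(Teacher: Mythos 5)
Your proposal is correct and follows essentially the same route as the paper, which gives no proof at all but simply states that the result "follows immediately from the proof of \cite[Theorem 3.2]{ck}": the unconditional implication (ii)$\Rightarrow$(i) is quoted from that theorem, and the conditional converse is exactly the point of Definition \ref{d:richNice}, whose clause (iii) supplies the monotone correspondence between models and subspaces that makes $\{X_M:\ M\in\M\}$ a rich witnessing family. Your identification of the $\sigma$-completeness of that family as the only step requiring the extra hypothesis matches the paper's intent precisely.
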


It is proved in \cite[Theorem 2.7]{ck} that suitable models generate nice rich families in $X$ whenever $X$ has a fundamental minimal system or $\operatorname{dens}X = \aleph_1$. (Recall that a family $\Gamma$ of vectors in $X$ is called a {\it fundamental minimal system}, if $\overline{\rm sp}\,\Gamma=X$ and, for every $\gamma\in\Gamma$, we have $\gamma\notin\overline{\rm sp}\, (\Gamma\setminus\{\gamma\}).\big)$ It is not known to the authors whether there exists a Banach space $X$ such that suitable models do not generate nice rich families in $X$. The main results of this section are the following.

\begin{theorem}\label{t:AsplundRichModely}Let $X$ be any Asplund space. Then suitable models generate nice rich families in $X$. Moreover, the family $\M$ from the Definition \ref{d:richNice} is such that $\big\{\overline{X^*\cap M}:\; M\in \M\big\}$ is a rich family of separable subspaces in $X^*$ and, for every $M$, $N\in \M$, we have $X_M\subset X_N$ if and only if $\overline{X^*\cap M}\subset\overline{X^*\cap N}$.
\end{theorem}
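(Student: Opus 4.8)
The plan is to build, for a given countable set $Y$ and finite list of formulas $\Phi$, the family $\M$ of suitable models directly, using the Asplund structure. First I would enlarge $\Phi$ to a list $\Phi'$ that includes (a copy of) all the formulas needed in Lemmas~\ref{l:basics-in-M} and \ref{l:dualInM}, in Theorems~\ref{dnoo-modely}, \ref{dno-modely}, so that any $M\prec(\Phi';Y)$ automatically satisfies: $X_M$ and $\overline{X^*\cap M}$ are closed separable subspaces, the restriction map $\overline{X^*\cap M}\ni x^*\mapsto x^*|_{X_M}$ is a surjective isometry onto $(X_M)^*$, and moreover $\|x^*\|=\|x^*|_{X_M}\|$ for all $x^*\in\overline{X^*\cap M}$. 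I would also throw in a formula expressing ``$x^*\in X^*$ is the Hahn--Banach-type functional on $X$ extending a given $v^*\in V^*$ with the same norm picked out by a fixed Asplund generator'', but in fact the surjective isometry from Theorem~\ref{dno-modely} already gives a canonical bijection, which is the key.

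Next I would take $\M$ to be the collection of \emph{all} $M\prec(\Phi';Y)$ (with $\{X\}\subset M$, noting $X\in Y$ may be arranged by enlarging $Y$). Condition (i) of Definition~\ref{d:richNice} is then automatic, and (ii) splits into cofinality and $\sigma$-completeness of $\{X_M:M\in\M\}$. Cofinality follows from Theorem~\ref{T:countable-model}: given a separable $Z\subset X$ pick a countable dense $D\subset Z$ and a model $M\supset Y\cup D$, so $X_M\supset Z$. For $\sigma$-completeness, if $M_1\prec M_2\prec\cdots$ in $\M$ then $M:=\bigcup_n M_n$ is again in $\M$ (absoluteness of each $\phi\in\Phi'$ passes to increasing unions of models closed under the relevant Skolem functions --- this is the standard Tarski--Vaught / ``union of an elementary chain'' argument), and $X_M=\overline{\bigcup_n X_{M_n}}$. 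The ``moreover'' statement about $X^*$ requires the same three facts with the roles suitably dualised: cofinality of $\{\overline{X^*\cap M}\}$ in $\SS(X^*)$ follows because each $\overline{X^*\cap M}$ is isometric (via restriction) to $(X_M)^*$, and $(X_M)^*$ is separable, and conversely any separable $W\subset X^*$ can be captured by adding a countable dense subset of $W$ to $Y$; $\sigma$-completeness is inherited from that of $\M$ under unions.

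The real work is condition (iii) of Definition~\ref{d:richNice} --- and the ``if and only if'' at the end of Theorem~\ref{t:AsplundRichModely}: that for $M,N\in\M$ one has $M\subset N \iff X_M\subset X_N \iff \overline{X^*\cap M}\subset\overline{X^*\cap N}$. The forward implications ($M\subset N$ implies both inclusions) are trivial. For the converse I would argue: suppose $X_M\subset X_N$. Since the restriction maps $\overline{X^*\cap M}\to(X_M)^*$ and $\overline{X^*\cap N}\to(X_N)^*$ are surjective isometries (Theorem~\ref{dno-modely}) and restriction from $(X_N)^*$ to $(X_M)^*$ is a quotient map, one gets that every element of $\overline{X^*\cap M}$, when viewed via its restriction to $X_M$, lifts to $\overline{X^*\cap N}$; the point is that the \emph{norm-preserving} extension from $(X_M)^*$ to an element of $\overline{X^*\cap N}$ is forced to land in $\overline{X^*\cap M}$ as well, because both are computed canonically from the Asplund generator $G\in M\subset N$. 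This is where I expect the main obstacle: one must check that the generator $G$ chosen inside $M$ via Lemma~\ref{L:unique-M} is the \emph{same} (as a set) as the one available in $N$ --- which holds provided the defining formula ``$G$ is an Asplund generator in $X$'' is made absolute and one uses that $M\subset N$ forces agreement of the witnesses, or alternatively that property (d) of Definition~\ref{generator} makes $\overline{\rm sp}\,G(C)$ depend only on $\overline{\rm sp}\,C$. To get the full equivalence with $M\subset N$, I would additionally include in $\Phi'$ formulas ensuring that $M$ is determined by $X\cap M$ together with countably many fixed auxiliary structures (the generator, the duality pairing, rational scalars), so that $X_M\subset X_N$ together with $\overline{X^*\cap M}\subset\overline{X^*\cap N}$ recovers $M\subset N$; this is exactly the kind of bookkeeping carried out in \cite[Theorem 2.7]{ck}, and I would follow that template, the only new ingredient being the use of Theorems~\ref{dnoo-modely} and \ref{dno-modely} to handle the dual side.
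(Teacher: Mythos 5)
Your proposal has the right auxiliary ingredients (the surjective isometry $\overline{X^*\cap M}\to(X_M)^*$, the Asplund generator, and the observation that property (d) of Definition~\ref{generator} lets one pass from $\overline{X\cap M}\subset\overline{X\cap N}$ to $\overline{X^*\cap M}\subset\overline{X^*\cap N}$ --- this last point is exactly the paper's Lemma~\ref{l:goDown}). But there is a genuine gap at the step you yourself flag as ``the real work'': you take $\M$ to be the family of \emph{all} $M\prec(\Phi';Y)$ and then hope to force condition (iii) of Definition~\ref{d:richNice} by ``including in $\Phi'$ formulas ensuring that $M$ is determined by $X\cap M$.'' No enlargement of the list of formulas can achieve this: for the family of all suitable models, distinct (even $\subset$-incomparable) models routinely produce the same closed subspace $X_M$, so $X_M\subset X_N\Rightarrow M\subset N$ simply fails. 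Condition (iii) is not a property one imposes by absoluteness; it is obtained by \emph{selecting} a special subfamily of models, indexed so that the trace on the space determines the model. The template you cite, \cite[Lemma 2.5]{ck} / \cite[Theorem 2.7]{ck}, does exactly this, but it requires a fundamental minimal system in the space in question --- and a general Asplund space $X$ need not have one. That is the whole difficulty the theorem is addressing, and your proposal does not resolve it.

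The paper's missing idea is: the \emph{dual} $X^*$ of an Asplund space admits a Markushevich basis, hence a fundamental minimal system $\Gamma$ with $|\Gamma|=\operatorname{dens}X$. One runs the construction of \cite[Lemma 2.5]{ck} in $X^*$, obtaining models $M_A$, $A\in[\Gamma]^{\leq\omega}$, for which $\overline{X^*\cap M_A}\subset\overline{X^*\cap M_B}$ if and only if $M_A\subset M_B$. One then transfers back to $X$ via the Asplund generator (Lemma~\ref{l:goDown}): $X_{M_A}\subset X_{M_B}$ forces $\overline{X^*\cap M_A}\subset\overline{X^*\cap M_B}$, hence $M_A\subset M_B$. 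Cofinality of $\{X_{M_A}\}$ in $\SS(X)$ needs one more device you do not supply: since the index set $\Gamma$ lives in $X^*$, the paper fixes a dense set $\{d_\gamma:\gamma\in\Gamma\}$ in $X$ and the map $H(A)=\{d_\gamma:\gamma\in A\}$ to guarantee that every separable subspace of $X$ is captured by some $M_A$. Note also that your $\sigma$-completeness argument presupposes that the models in an increasing chain of subspaces $X_{M_1}\subset X_{M_2}\subset\cdots$ are themselves increasing; this is only available \emph{after} condition (iii) has been established, so the order of the argument matters.
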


\begin{theorem}\label{t:CompactRichModely}Let $K$ be any zero-dimensional compact space. Then suitable models generate nice rich families in $X$ whenever $X$ is isomorphic to $C(K)$.
\end{theorem}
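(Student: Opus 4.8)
The plan is to reduce Theorem~\ref{t:CompactRichModely} to Theorem~\ref{tRichAModel} together with the criterion of \cite[Theorem 2.7]{ck}: it suffices to show that $C(K)$ has a fundamental minimal system whenever $K$ is a zero-dimensional compact space, since then suitable models generate nice rich families in $C(K)$, and this property transfers to any space isomorphic to $C(K)$ (the property ``suitable models generate nice rich families in $X$'' depends only on the isomorphism class, as one checks directly from Definition~\ref{d:richNice}, because $X_M=\overline{X\cap M}$ is carried to the corresponding subspace under a linear isomorphism belonging to $M$, using Lemma~\ref{L:unique-M}). So the mathematical heart is: \emph{if $K$ is zero-dimensional compact, then $C(K)$ admits a fundamental minimal system.}

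To build such a system I would use the clopen algebra of $K$. Fix a well-ordering $\{U_\alpha:\ \alpha<\kappa\}$ of the clopen subsets of $K$ (where $\kappa = \operatorname{dens}C(K)$ can be arranged to equal the weight of $K$), starting with $U_0=K$ so that the constant function $\chi_K=1$ is included. The natural candidate is $\Gamma=\{\chi_{U_\alpha}:\ \alpha<\kappa\}$, but these indicator functions need not form a minimal system. Instead I would recursively thin out or orthogonalize: at stage $\alpha$, if $\chi_{U_\alpha}\in\overline{\operatorname{sp}}\,\{\chi_{U_\beta}:\ \beta<\alpha\}$ discard it; otherwise keep it. Because finite linear combinations of indicator functions of clopen sets are exactly the locally constant (equivalently, finitely-valued continuous) functions, and these are dense in $C(K)$ by the Stone--Weierstrass theorem (zero-dimensionality gives enough clopen sets to separate points), the retained family $\Gamma$ still has $\overline{\operatorname{sp}}\,\Gamma = C(K)$. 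By construction no retained element lies in the closed span of the \emph{earlier} retained ones; the remaining point is to promote this to genuine minimality, i.e. $\gamma\notin\overline{\operatorname{sp}}(\Gamma\setminus\{\gamma\})$ for \emph{every} $\gamma$, not just with respect to the initial segment. For this I would exhibit, for each retained $\gamma=\chi_{U_\alpha}$, a biorthogonal functional: one can choose a point $t_\alpha\in K$ lying in $U_\alpha$ but in a cleverly chosen combination of later clopen sets so that the evaluation-type functional (a finite signed combination of Dirac measures $\delta_{t}$) annihilates all other retained indicators while not annihilating $\gamma$; the existence of such separating points is again a consequence of zero-dimensionality, since the clopen algebra is rich enough to distinguish the relevant finite configurations.

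I expect the main obstacle to be exactly the step from the ``triangular'' independence furnished by the greedy construction to full biorthogonality. A cleaner route, which I would try first, is to quote or reprove the classical fact (due essentially to the structure theory of $C(K)$ for zero-dimensional $K$, or via a transfinite Gram--Schmidt-type argument in the algebra of locally constant functions) that $C(K)$ is generated by a family of indicators of clopen sets forming a \emph{tree-like} or \emph{well-founded} system, from which a biorthogonal system is read off: pick a maximal antichain refinement at each level so that the clopen sets used are pairwise related by inclusion or disjointness in a controlled way, and then Dirac measures at suitably chosen points give the dual system. If such a direct combinatorial construction proves fiddly, the fallback is to invoke \cite[Theorem 2.7]{ck} via the alternative hypothesis there: when $\operatorname{dens}C(K)=\aleph_1$ the conclusion is automatic, and for larger density one reduces to this case along a suitable continuous chain of clopen subalgebras — but the cleanest statement really does come from producing the fundamental minimal system outright.

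Once the fundamental minimal system is in hand, Theorem~\ref{t:CompactRichModely} follows: by \cite[Theorem 2.7]{ck} suitable models generate nice rich families in $C(K)$, hence (by the isomorphism-invariance noted above) in every $X$ isomorphic to $C(K)$, which is the assertion.
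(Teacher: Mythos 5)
Your reduction to \cite[Theorem 2.7]{ck} rests on the claim that $C(K)$ admits a fundamental minimal system whenever $K$ is zero-dimensional compact, and this claim is false; in fact the whole point of the theorem is to cover spaces where it fails. The paper's closing remark gives the counterexample explicitly: for $|\Gamma|>\mathfrak c$ the space $\ell_\infty^c(\Gamma)$ has \emph{no} fundamental minimal system, yet it is isomorphic to $C(K)$ for a zero-dimensional compact $K$ (the Stone space of the algebra of countable/co-countable subsets of $\Gamma$). Since the existence of a fundamental minimal system is an isomorphic invariant, this $C(K)$ has none either, so your ``mathematical heart'' cannot be proved. The difficulty you yourself flag --- promoting the triangular independence of the greedy transfinite thinning to genuine minimality, i.e.\ producing a biorthogonal family --- is not a technical nuisance but the precise obstruction; it is not surmountable in general. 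Your fallback (reducing to density $\aleph_1$ along a chain of clopen subalgebras) is not an argument.

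What actually works, and what the paper does, is to bypass fundamental minimal systems entirely and invoke the more flexible criterion \cite[Lemma 2.5]{ck}: it suffices to exhibit a fundamental family $\{\iota_A:\ A\in\operatorname{Clop}K\}$ (dense span by Stone--Weierstrass) together with a finite list of formulas such that for every suitable model $M$ and every clopen $B$, $\iota_B\in\overline{\rm sp}\,\{\iota_A:\ A\in\operatorname{Clop}K\cap M\}$ already forces $B\in M$. This is verified by a quantitative argument: approximate $\iota_B$ within $\tfrac14$ by a finite combination $f$ of indicators from $M$, rewrite $f$ (using closure of $M$ under intersections and differences) as a combination of indicators of pairwise disjoint clopen sets in $M$, and conclude that $B$ is a finite union of sets in $M$, hence lies in $M$. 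Your isomorphism-invariance step is fine and matches the paper's Lemma \ref{l:Isomorphism}, but the core of your proof needs to be replaced along these lines.
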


The latter statement gives a partial positive answer to \cite[Question 2.8]{ck}. As a consequence, using Theorem \ref{tRichAModel}, we immediately obtain a partial positive answer to \cite[Question 3.6]{ck}.

\begin{theorem}\label{t:richIffModel}Let $X$, $A$, $f$, $C_1,\ldots,C_k$ and $\phi(X,A,f,C_1,\ldots,C_k)$ be as in Theorem \ref{tRichAModel}. If $X$ is Asplund or isomorphic to $\CC(K)$, where $K$ is zero-dimensional compact, then the following conditions are equivalent:
	\item {(i)} $\phi(X,A,f,C_1,\ldots,C_k)$ is separably determined by the method of suitable models.
	\item {(ii)} $\phi(X,A,f,C_1,\ldots,C_k)$ is separably determined by the method of rich families.
\end{theorem}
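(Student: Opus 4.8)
The plan is to reduce the statement entirely to the machinery already set up in the preceding paragraphs and to cite Theorems~\ref{tRichAModel}, \ref{t:AsplundRichModely}, and \ref{t:CompactRichModely}. The implication (ii)$\Longrightarrow$(i) is for free: by Theorem~\ref{tRichAModel}, whenever $\phi(X,A,f,C_1,\ldots,C_k)$ is separably determined by the method of rich families, it is automatically separably determined by the method of suitable models, with no hypothesis on $X$ at all. So the entire content lies in establishing (i)$\Longrightarrow$(ii) under the stated hypothesis on $X$, and for this, again by Theorem~\ref{tRichAModel}, it suffices to verify that \emph{suitable models generate nice rich families in $X$} in the sense of Definition~\ref{d:richNice}. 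This is exactly what Theorems~\ref{t:AsplundRichModely} and \ref{t:CompactRichModely} assert: the first for $X$ Asplund, the second for $X$ isomorphic to $C(K)$ with $K$ zero-dimensional compact.

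Concretely, I would argue as follows. Assume $X$ is Asplund or isomorphic to $C(K)$ with $K$ zero-dimensional compact. By Theorem~\ref{t:AsplundRichModely} (if $X$ is Asplund) or by Theorem~\ref{t:CompactRichModely} (if $X$ is isomorphic to $C(K)$), suitable models generate nice rich families in $X$. Hence the hypothesis ``suitable models generate nice rich families in $X$'' in the ``moreover'' clause of Theorem~\ref{tRichAModel} is satisfied, and therefore conditions (i) and (ii) of that theorem are equivalent. Since the conditions (i) and (ii) in the present Theorem~\ref{t:richIffModel} are literally the conditions (i) and (ii) of Theorem~\ref{tRichAModel} applied to the same data $X, A, f, C_1,\ldots,C_k$ and the same statement $\phi$, the equivalence follows immediately.

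In other words, this theorem is a pure corollary: all the genuine work has already been done, partly in \cite{ck} (packaged as Theorem~\ref{tRichAModel}) and partly in the two structural results Theorems~\ref{t:AsplundRichModely} and \ref{t:CompactRichModely} whose proofs precede or follow in this section. I do not anticipate any obstacle in the write-up of Theorem~\ref{t:richIffModel} itself; the only thing to be careful about is to state clearly that (ii)$\Longrightarrow$(i) needs no assumption on $X$, while (i)$\Longrightarrow$(ii) is where the hypothesis ``$X$ Asplund or $X\cong C(K)$ with $K$ zero-dimensional'' enters, precisely through the ``nice rich families'' property supplied by Theorems~\ref{t:AsplundRichModely} and \ref{t:CompactRichModely}. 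The real mathematical difficulty — and hence the main obstacle — is not here but in proving Theorem~\ref{t:AsplundRichModely}: one must construct, for a given finite list of formulas $\Phi$ and countable seed set $Y$, a family $\M$ of countable elementary-type submodels $M\prec(\Phi;Y)$ whose traces $\{X_M:\ M\in\M\}$ form a rich family in $X$ and which, additionally, is monotone in the sense of (iii) of Definition~\ref{d:richNice} and controls the duals $\overline{X^*\cap M}$; that construction leans on the Asplund generator from Definition~\ref{generator} and on the absoluteness technology of Lemma~\ref{L:unique-M} in the spirit of the proof of Theorem~\ref{dno-modely}. Once that is in hand, Theorem~\ref{t:richIffModel} is immediate.
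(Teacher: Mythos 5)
Your proposal is correct and follows exactly the paper's route: the theorem is stated in the paper as an immediate consequence of Theorem~\ref{tRichAModel} combined with Theorems~\ref{t:AsplundRichModely} and \ref{t:CompactRichModely}, with (ii)$\Longrightarrow$(i) holding unconditionally and (i)$\Longrightarrow$(ii) supplied by the ``nice rich families'' property. You also correctly identify that the real work lies in those two structural theorems, not in this corollary.
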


Before proving Theorems \ref{t:AsplundRichModely} and \ref{t:CompactRichModely}, let us discuss their
content. Assume that $X$ is an Asplund space. If we deal with rich families of separable subspaces in $X$, then Theorem \ref{t:richIffModel} says that both methods are equivalent; see e.g. \cite[Corollaries 3.3 and 3.4]{ck} for some examples. However, if we are interested in the relation with rectangle-families, the situation is more complicated and we have to deal with more details. For example, let us examine the relation between Theorem \ref{dno} (i)$\implies$(iii) and Theorem \ref{dno-modely}.

``Theorem \ref{dno}$\implies$Theorem \ref{dno-modely}'': We can obtain a bit weaker version of Theorem \ref{dno-modely} from (i)$\implies$(iii) in Theorem \ref{dno}. The weakening is in the fact that our model will depend on the Banach space $X$. However, this is not important in applications and so we can say that ``the method of rectangle-families implies the method of suitable models'' in this case. Let $X$ be a Banach space, $C$ a countable set and $\Phi$ a finite list of formulas. Then there is another countable set $D\supset C$ and a finite list of formulas $\Psi$, containing the formulas of $\Phi$, such that whenever $M\prec(\Psi;D)$, the assignment $\overline{X^*\cap M}\ni x^*\longmapsto x^*|_{X_M}\in (X_M)^*$ is an isometry onto $(X_M)^*$. Indeed, by \cite[Proposition 3.1]{ck}, we can find a set $D\supset C$ and a finite list of formulas $\Psi$, containing all formulas from $\Phi$, such that for every $M\prec(\Psi;D)$ we have $\overline{(X\times X^*)\cap M}\in\AA$, where $\AA\subset \SS(X\times X^*)$ is the rich family from (iii) in Theorem \ref{dno}. Now, it is quite easy to check that by enlarging the list $\Psi$, we may assume that $\overline{(X\times X^*)\cap M} = X_M\times \overline{X^*\cap M}$. Hence, $\overline{X^*\cap M}\ni x^*\longmapsto x^*|_{X_M}$ is an isometry onto $(X_M)^*$.

``Theorem \ref{dno-modely}$\implies$ Theorem \ref{dno}'': We can obtain exactly (i)$\implies$ (iii) in Theorem \ref{dno} from Theorem \ref{dno-modely}. Indeed, by Theorems \ref{dno-modely} and \ref{t:AsplundRichModely}, there is a family $\M$ of sets such that both $\{X_M:\; M\in\M\}$ and $\{\overline{X^*\cap M}:\; M\in\M\}$ are rich families of separable subspaces in $X$ and $X^*$ respectively, $\overline{X^*\cap M}\ni x^*\longmapsto x^*|_{X_M}$  is an isometry onto $(X_M)^*$ for every $M\in\M$ and whenever $M$, $N\in M$, we have $X_M\subset X_N$ if and only if $\overline{X^*\cap M}\subset \overline{X^*\cap N}$. Thus, the desired rectangle-family
is $\{X_M\times \overline{X^*\cap M}:\; M\in\M\}$.

Of course, we could similarly show that any other results concerning an Asplund space $X$ and its dual formulated either in the language of rectangle-families on $X\times X^*$ or in the language of suitable models are in some sense equivalent. However, formulating such a result in a precise way requires introducing a rather complicated definition and it seems to us that such a result is not worth it.\\

In the remainder of this section we prove Theorems \ref{t:AsplundRichModely} and \ref{t:CompactRichModely}. 
We denote by $[I]^{\leq\omega}$ the family of all countable subsets of $I$ (unlike of Section 2). 
The core of the problem is to find a class of models for which we have $X_M\subset X_N$ (if and) only if $M\subset N$; see condition (iii) in Definition \ref{d:richNice}.

First, let us consider the case of Asplund spaces. There, using some observations from \cite{ck}, we know that the dual space admits such a class of models. An argument similar to 
the proof of the implication (ii)$\implies$(iii) in Theorem \ref{dno} gives us the possibility to move from the space to its dual, where the construction is already known. This is expressed more precisely in Lemma \ref{l:goDown}. We begin with a preliminary result.

\begin{lemma}\label{l:skrinka}There are a finite list of formulas $\Phi$ and a countable set $C$ such that the any $M \prec (\Phi; C)$ satisfies the following:\\[5pt]
Let $X$ be an Asplund space with an Asplund generator $G$ and assume that $\{X, G\}\subset M$. Then $\overline{X^*\cap M} = \overline{G(X\cap M)}$.
\end{lemma}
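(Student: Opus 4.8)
The statement to prove is: there are a finite list of formulas $\Phi$ and a countable set $C$ such that any $M\prec(\Phi;C)$ satisfies: if $X$ is Asplund with an Asplund generator $G$ and $\{X,G\}\subset M$, then $\overline{X^*\cap M}=\overline{G(X\cap M)}$. The plan is to choose $\Phi$ and $C$ to contain everything from Lemma~\ref{l:basics-in-M}, and to throw in a few extra absoluteness formulas, then argue the two inclusions separately. The inclusion $\overline{G(X\cap M)}\subset\overline{X^*\cap M}$ is the easy half: for $x\in X\cap M$ we have $G(x)\in M$ (as $G\in M$ is a function and Lemma~\ref{l:basics-in-M}(i) applies), and $G(x)$ is a countable set, so by Lemma~\ref{l:basics-in-M}(iii) $G(x)\subset M$, hence $G(x)\subset X^*\cap M$; taking closures gives the inclusion. (This is exactly the computation already carried out inside the proof of Theorem~\ref{dno-modely}.)

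The substantive half is $\overline{X^*\cap M}\subset\overline{G(X\cap M)}$. First I would record the key ``covering'' fact about Asplund generators: property (b) in Definition~\ref{generator} (together with the monotone-exhaustion trick) shows that $\bigcup\{G(C):C\in\CC(X)\}=\bigcup\{G(\{x\}):x\in\bigcup C\}$ in the sense that $G$ of a countable set is the union of $G$ on its one-element — or finite — pieces; more usefully, $G(C)=\bigcup\{G(C'):C'\subset C \text{ finite}\}$. So to approximate a given $x^*\in X^*\cap M$ by elements of $G(X\cap M)$ it suffices to find, for each $\eta\in\Q_+$, a finite set $F\subset X\cap M$ with $\mathrm{dist}(x^*,G(F))<\eta$. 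Now the point is that the statement ``there exists a finite $F\subset X$ and $y^*\in G(F)$ with $\|x^*-y^*\|<\eta$'' — which is true in $V$ for a suitable countable dense-in-its-span subspace, or rather is true outright in $X$ by density of $\bigcup G$ (property (c)) — can be arranged to be absolute for $M$; so by Lemma~\ref{L:unique-M}, applied with parameters $x^*,\eta\in M$ (here $\Q\subset C$, so $\Q\subset M$ by Lemma~\ref{l:basics-in-M}(iii)), we find such $F$ and $y^*$ inside $M$. Then $F\subset X\cap M$ and $y^*\in G(F)\subset G(X\cap M)$, and $\|x^*-y^*\|<\eta$; letting $\eta\to0$ over $\Q_+$ shows $x^*\in\overline{G(X\cap M)}$. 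Since $X^*\cap M$ is thus contained in $\overline{G(X\cap M)}$, so is its closure.

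**Choosing $\Phi$.** I would take $\Phi$ to consist of: the list from Lemma~\ref{l:basics-in-M}; the formula ``$\exists F\ \exists y^*\ (F\text{ is a finite subset of }X\ \wedge\ y^*\in G(F)\ \wedge\ \|x^*-y^*\|<\eta)$'' together with all of its subformulas; and enough elementary formulas to guarantee that a finite subset of $X$ that lies in $M$ as a set is also an element of $M$ (e.g. absoluteness of ``$z$ is a finite subset of $X$'' and closure under forming finite sets, via Lemma~\ref{l:basics-in-M}(ii) iterated), so that ``$F\subset X\cap M$'' really does give $F\in M$ and hence $G(F)\in M$ by Lemma~\ref{l:basics-in-M}(i). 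The set $C$ is the one from Lemma~\ref{l:basics-in-M} enlarged, if necessary, so that $\Q\subset C$.

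**Main obstacle.** The delicate point is the bookkeeping that makes ``$\exists F,y^*$'' applicable via Lemma~\ref{L:unique-M}: one needs the existential statement to be \emph{true} in $X$ (this is density of $\bigcup_{C}G(C)$ in $X^*$, i.e. Definition~\ref{generator}(c), applied to $x^*\in X^*$), one needs it to be \emph{absolute} for $M$ (which is arranged by putting it and its subformulas on the list $\Phi$), and one needs the witness $F$ produced inside $M$ to genuinely be a finite \emph{subset} of $X\cap M$ so that $G(F)$ is computed correctly in $M$ — this last is where one must be slightly careful that ``$F\in M$ and $F$ finite'' forces $F\subset M$, which follows from Lemma~\ref{l:basics-in-M}(iii). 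Everything else is the same routine as in the proof of Theorem~\ref{dno-modely}.
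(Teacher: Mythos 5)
Your proof is correct and follows essentially the same route as the paper's: the easy inclusion via Lemma~\ref{l:basics-in-M}(i),(iii), and the hard inclusion by making the density property (c) of the Asplund generator absolute for $M$, pulling a witness into $M$, and then using the monotonicity supplied by property (b) to place that witness inside $G(X\cap M)$. The only immaterial difference is that you quantify over finite witness sets $F$ where the paper quantifies over countable ones $C\in[X]^{\leq\omega}$ (and your passing remark that $G$ of a countable set is the union of $G$ over its singletons is unjustified, but you immediately replace it by the correct finite-pieces version, which is all you use).
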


\begin{proof} Take the list $\Phi$ and the countable set $C$  from Lemma \ref{l:basics-in-M}. We may assume that $C\supset \Q$. 
Append to $\Phi$ the formula $(*)$ marked below and all subformulas of it and denote such an enhanced
list again by $\Phi$.
Fix any $M \prec (\Phi; C)$, with $\{X,G\}\in M$.

For every $x\in X\cap M$, by Lemma \ref{l:basics-in-M} (i) (iii), we have $G(x)\subset X^*\cap M$; hence $G(X\cap M)\subset X^*\cap M$ and $\overline{G(X\cap M)}\subset \overline{X^*\cap M}$. For the other inclusion, fix 
any $x^*\in X^*\cap M$. Then, for every $n\in\N$, by the property (c) of Asplund generator,
\[\tag{$*$}
\exists C\in [X]^{\leq\omega} \ \exists y^*\in G(C):\;\|x^* - y^*\|\leq \tfrac{1}{n}\,.
\]
By Lemma \ref{L:unique-M}, using the absoluteness of this formula and its subformula, for every $n\in\N$
we find $C_n\in [X]^{\leq\omega}\cap M$ and $y^*_n\in G(C_n)$ satisfying the formula above (note that, for every $n\in\N$, we use the absoluteness of one formula --- what varies is the parameter $n$).  By Lemma \ref{l:basics-in-M} (iii), $C_n\subset X\cap M$; hence, by the property (b) of Asplund generator, $y^*_n\in G(X\cap M)$ for every $n\in\N$ and we get $x^*\in \overline{G(X\cap M)}$. Thus, $X^*\cap M\subset \overline{G(X\cap M)}$. It follows that $\overline{X^*\cap M}\subset \overline{G(X\cap M)}$.
\end{proof}

\begin{lemma}\label{l:goDown}There are a finite list of formulas $\Phi$ and a countable set $C$ such that the following holds:\\[5pt]
Let $X$ be an Asplund space with an Asplund generator $G$ and let $M, N \prec (\Phi; C)$ be such that $\{X, G\}\subset M\cap N$. If $\overline{X\cap M}\subset \overline{X\cap N}$, then $\overline{X^*\cap M}\subset \overline{X^*\cap N}$.
\end{lemma}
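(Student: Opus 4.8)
The plan is to exploit the Asplund generator $G$ to transport the inclusion $\overline{X\cap M}\subset\overline{X\cap N}$ from the primal space to the dual, essentially mimicking the ``monotonicity'' argument at the end of the proof of (ii)$\implies$(iii) in Theorem~\ref{dno}, but now carried out at the level of models rather than of explicit countable sets. First I would fix a finite list $\Phi$ and a countable set $C$ large enough so that any $M\prec(\Phi;C)$ satisfies the conclusions of Lemma~\ref{l:basics-in-M} and of Lemma~\ref{l:skrinka}; in particular, for any model $M$ with $\{X,G\}\subset M$ we then have the identity $\overline{X^*\cap M}=\overline{G(X\cap M)}$, and likewise for $N$.

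With that identity in hand, the statement reduces to showing that $\overline{G(X\cap M)}\subset\overline{G(X\cap N)}$ whenever $\overline{X\cap M}\subset\overline{X\cap N}$. Here is where the defining properties of the Asplund generator enter. Fix $x\in X\cap M$. By assumption $x\in\overline{X\cap N}$, so there is a sequence $(x_n)$ in $X\cap N$ with $x_n\to x$. Set $C_1:=\{x_1,x_2,\ldots\}\in\CC(X)$; then $\overline{\rm sp}\,C_1$ contains $x$, hence $\overline{\rm sp}\,(C_1\cup\{x\})=\overline{\rm sp}\,C_1$, and property~(d) of $G$ gives $\overline{\rm sp}\,G(C_1\cup\{x\})=\overline{\rm sp}\,G(C_1)$. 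Combining this with property~(b) applied to the increasing sequence $C_1\subset C_1\cup\{x\}$ yields $G(\{x\})\subset\overline{\rm sp}\,G(C_1)\subset\overline{\rm sp}\,G(X\cap N)$. So $G(x)\subset\overline{\rm sp}\,G(X\cap N)$ for every $x\in X\cap M$, whence $G(X\cap M)\subset\overline{\rm sp}\,G(X\cap N)$. This is slightly weaker than what is wanted: it gives $\overline{G(X\cap M)}\subset\overline{\rm sp}\,G(X\cap N)$, with a linear span on the right. To close the gap, note that $\overline{\rm sp}\,G(X\cap N)$ need not equal $\overline{G(X\cap N)}$ in general, so one should instead run the span-free version: since $C_1\subset X\cap N$, property~(b) directly gives $G(C_1)\subset G(X\cap N)$, and then one only needs to pass $x$ itself into the picture, which is exactly what properties (b) and (d) above deliver with $G(\{x\})\subset\overline{\rm sp}\,G(C_1)$. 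Taking closures and using that $\overline{G(X\cap N)}$ is a closed subspace (by Lemma~\ref{l:basics-in-M}(iv), applied to the closed span, together with Lemma~\ref{l:skrinka} identifying it with $\overline{X^*\cap N}$) resolves the span-versus-set discrepancy, since $\overline{G(X\cap N)}=\overline{X^*\cap N}$ is already a subspace and hence absorbs finite linear combinations.

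Assembling the pieces: $\overline{X^*\cap M}\stackrel{\ref{l:skrinka}}{=}\overline{G(X\cap M)}\subset\overline{X^*\cap N}$, which is the claim. The main obstacle I expect is precisely the bookkeeping around property~(d): one must be careful that the hypothesis $\overline{X\cap M}\subset\overline{X\cap N}$ (a containment of closed subspaces) is converted correctly into a statement about $\overline{\rm sp}$ of countable sets so that~(d) and~(b) can be invoked, and then that the resulting closed linear span on the dual side is recognized, via Lemma~\ref{l:skrinka}, to coincide with the desired set-closure $\overline{X^*\cap N}$. Everything else is a routine closure argument. I would also remark that no absoluteness argument beyond those already packaged into Lemmas~\ref{l:basics-in-M} and~\ref{l:skrinka} is needed for this lemma — it is a purely functional-analytic consequence of the Asplund generator axioms once the two models are ``resolved'' via Lemma~\ref{l:skrinka}.
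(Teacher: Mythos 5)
Your proposal is correct and follows essentially the same route as the paper: identify $\overline{X^*\cap M}=\overline{G(X\cap M)}$ via Lemma~\ref{l:skrinka}, transport the primal inclusion to the dual through properties (b) and (d) of the Asplund generator, and absorb the resulting linear span using that $\overline{X^*\cap N}$ is a closed subspace. The only cosmetic difference is that the paper runs the (b)/(d) argument once, on a countable dense subset $D\subset\overline{X\cap N}$ containing all of $X\cap M$, rather than singleton by singleton --- which also smooths over the small jump in your ``whence $G(X\cap M)\subset\overline{\rm sp}\,G(X\cap N)$'', since by (b) one needs the containment for finite subsets of $X\cap M$ and not merely for singletons (your argument extends verbatim to finite sets, so nothing is lost).
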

\begin{proof}Let $\Phi$ and $C$ be the unions of all the lists $\Phi$'s and the sets $C$'s 
from Lemmas \ref{l:basics-in-M}, \ref{l:dualInM} and \ref{l:skrinka}, respectively. Fix any $M,N \prec (\Phi; C)$ 
such that $\{X,G\}\subset M\cap N$ and $\overline{X\cap M}\subset \overline{X\cap N}$.

Pick a countable dense subset $D\subset \overline{X\cap N}$ with $D\supset X\cap M$. Then we have, using Lemma \ref{l:skrinka}, the properties (b) and (d) of Asplund generator, and the fact that $\overline{X^*\cap N}$ is a closed subspace,
$$\overline{X^*\cap M} = \overline{G(X\cap M)}\subset \overline{\rm sp}\, {G(D)} = \overline{\rm sp}\, {G(X\cap N)} = \overline{X^*\cap N},$$ 
which completes the proof.
\end{proof}


\begin{proof}[Proof of Theorem \ref{t:AsplundRichModely}]We know from, e.g., \cite[page 166]{hmvz}, that $X^*$ admits a Markushevich basis. Then the ``bottom'' of this basis will clearly be a fundamental minimal system; call it $\Gamma$. Note that $|\Gamma| = \operatorname{dens} X^* = \operatorname{dens} X$. Hence, we can pick a subset $\{d_\gamma:\ \gamma\in\Gamma\}$ dense in $X$ and define the mapping $H:[\Gamma]^{\leq\omega}
\longrightarrow[X]^{\leq\omega}$ by $H(A):=\{d_\gamma:\; \gamma\in A\},\ A\in[\Gamma]^{\leq\omega}$. By Theorem \ref{dno}, pick an Asplund generator $G$ in $X$. In order to see that suitable models generate nice rich families in $X$, fix a countable set $Y$ and a finite list of formulas $\Phi$. We may without loss of generality assume that  $\{X,H,G\}\subset Y$ and that the set $Y$ and the list $\Phi$ contain the sets $C$'s and the lists $\Phi$'s
from the statements of Lemmas \ref{l:goDown}, \ref{l:basics-in-M}, respectively (because the condition (i) in Definition \ref{d:richNice} is inherited by countable subsets and shorter lists of formulas).

Since $X^*$ admits a fundamental minimal system, it follows immediately from the proof of \cite[Lemma 2.5]{ck} that there exists a family $\M: = \{M_A:\; A\in[\Gamma]^{\leq\omega}\}$ satisfying the following conditions.
	\item{(i')} For every $A\in[\Gamma]^{\leq\omega}$, $M_A\prec (\Phi; Y)$ and $A\subset M_A$.
	\item{(ii')} For every $A, B\in [\Gamma]^{\leq\omega}$, $\overline{X^*\cap M_A}\subset \overline{X^*\cap M_B}$ (if and) only if $M_A\subset M_B$.
	\item{(iii')} For every increasing sequence $(M_n)$ from $\M$ we have $\bigcup M_n\in\M$.

We will show that the family $\M$ satisfies conditions (i)--(iii) from the Definition \ref{d:richNice}. Condition (i) follows from (i') above. In order to verify (iii), observe that, for every $A,B\in [\Gamma]^{\leq\omega}$, we have
$$\overline{X\cap M_A}\subset \overline{X\cap M_A}\stackrel{\text{Lemma \ref{l:goDown}}}{\implies} \overline{X^*\cap M_A}\subset \overline{X^*\cap M_A}\stackrel{\text{(ii')}}{\implies} M_A\subset M_B.$$
Hence, condition (iii) holds. It remains to show (ii), that is, that $\FF := \{X_M:\; M\in\M\}$ is a rich family of separable subspaces in $X$

As for the $\sigma$-closeness property of $\FF$, let $X_{M_1}\subset X_{M_2}\subset\cdots$ be an increasing sequence of separable subspaces from $\FF$. By the already verified condition (iii), we have that $(M_n)$ is an increasing sequence. By (iii'), we get $\bigcup M_n \in \M$. Now, it is easy to verify (see e.g. \cite[Lemma 3.4]{c}) that $\overline{\bigcup X_{M_n}} = X_{\bigcup M_n}\in \FF$.

In order to show that $\FF$ is cofinal, we use the mapping $H$. Let $C$ be a countable set in $X$. Then there is a countable subset $A$ of $I$ with $C\subset \overline{H(A)}$. Hence, by (i') and Lemma \ref{l:basics-in-M}, we have $H(A)\subset M_A$ and $C\subset \overline{X\cap M_A} = X_{M_A}\in\FF$.
\end{proof}

Now, let us consider the case of $C(K)$ spaces where $K$ is zero-dimensional compact. In order to prove  Theorem \ref{t:CompactRichModely}, we first observe the following.

\begin{lemma}\label{l:Isomorphism}Let $X$ and $Z$ be isomorphic Banach spaces. If suitable models generate nice rich families in $X$, then suitable models generate nice rich families in $Z$.
\end{lemma}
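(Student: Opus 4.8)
The plan is to transport a witnessing family from $X$ to $Z$ along a fixed isomorphism. Fix a bounded linear bijection $R\colon Z\to X$ with bounded inverse $R^{-1}\colon X\to Z$. Given a countable set $Y$ and a finite list of formulas $\Phi$ (the data for which we must exhibit a family witnessing that suitable models generate nice rich families in $Z$), I would apply the hypothesis for $X$ to the \emph{enlarged} data
$Y':=Y\cup C_0\cup\{X,Z,R\}$ and a list $\Phi'$ obtained by appending to $\Phi\cup\Psi_0$ the formula $(*)$ below together with all its subformulas, where $(\Psi_0,C_0)$ is the pair furnished by Lemma~\ref{l:basics-in-M}, $(*)$ is the formula ``$\exists S\colon S$ is the inverse operator to $R$'', and (as in the Convention) including $X$ and $Z$ in $Y'$ means including their full normed-space structures. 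This produces a family $\M$ such that every $M\in\M$ satisfies $M\prec(\Phi';Y')$, the family $\FF:=\{X_M\colon M\in\M\}$ is rich in $\SS(X)$, and $M\subset N\Leftrightarrow X_M\subset X_N$ for all $M,N\in\M$. I claim that this same $\M$ works for $Z$.

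The heart of the argument is the identity $Z_M=R^{-1}(X_M)$ for every $M\in\M$. To obtain it, note first that absoluteness of $(*)$ and of its subformulas, together with Lemma~\ref{L:unique-M} (applied with the parameter $R\in M$), forces $R^{-1}\in M$. Since $R\in M$ and $R^{-1}\in M$ are functions with domains $Z$ and $X$ respectively, Lemma~\ref{l:basics-in-M}(i) gives $R(Z\cap M)\subset X\cap M$ and $R^{-1}(X\cap M)\subset Z\cap M$, hence $R(Z\cap M)=X\cap M$; and since $R$ is a homeomorphism, passing to closures yields $R(Z_M)=\overline{X\cap M}=X_M$, i.e. $Z_M=R^{-1}(X_M)$. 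In particular $Z_M$ is a separable closed linear subspace of $Z$, being the image of one under the isomorphism $R^{-1}$.

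With this identity in hand, the three conditions of Definition~\ref{d:richNice} for $Z$ follow quickly. Condition (i) is automatic, since $\Phi\subset\Phi'$ and $Y\subset Y'$, so $M\prec(\Phi';Y')$ implies $M\prec(\Phi;Y)$. For condition (iii): $R^{-1}$ is a bijection of underlying sets, so $Z_M\subset Z_N\Leftrightarrow X_M\subset X_N$, which by the corresponding property of $\M$ over $X$ is equivalent to $M\subset N$. For condition (ii): the assignment $W\mapsto R^{-1}(W)$ is an order isomorphism of the poset $\SS(X)$ onto $\SS(Z)$ (with inverse $W\mapsto R(W)$), hence it maps rich families to rich families; by the displayed identity its image on $\FF$ is exactly $\{Z_M\colon M\in\M\}$, which is therefore rich in $\SS(Z)$. (If one prefers to check this by hand: cofinality is obtained by pulling a countable $C\subset Z$ up to the countable set $R(C)\subset X$; $\sigma$-completeness follows since an increasing sequence $Z_{M_1}\subset Z_{M_2}\subset\cdots$ forces, via (iii), an increasing sequence $(M_i)$ and hence an increasing sequence $(X_{M_i})$, whose $\FF$-limit $X_M=\overline{\bigcup_i X_{M_i}}$ satisfies $Z_M=R^{-1}\big(\overline{\bigcup_i X_{M_i}}\big)=\overline{\bigcup_i Z_{M_i}}$.)

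As for difficulty: I do not anticipate a substantial obstacle. The only point requiring care is the bookkeeping with the lists of formulas — ensuring that $R^{-1}$ lands in every model (the role of the formula $(*)$, in the spirit of how the dual space is placed in the model in Lemma~\ref{l:dualInM}) and that the models enjoy the elementary closure properties of Lemma~\ref{l:basics-in-M}. Once the identity $Z_M=R^{-1}(X_M)$ is secured, the rest is the routine observation that an isomorphism of Banach spaces induces an order isomorphism between their lattices of separable closed subspaces which is compatible with the suitable-model machinery.
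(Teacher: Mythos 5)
Your proposal is correct and follows essentially the same route as the paper: enlarge the data so that the isomorphism (and its inverse) lie in every model, deduce $R(Z\cap M)=X\cap M$ from Lemma~\ref{l:basics-in-M}(i), pass to closures to get $Z_M=R^{-1}(X_M)$, and transport the three conditions of Definition~\ref{d:richNice} along the resulting order isomorphism of $\SS(X)$ onto $\SS(Z)$. The only cosmetic difference is that the paper simply places $T^{-1}$ directly into the countable set, whereas you recover it inside each model via an existential formula and Lemma~\ref{L:unique-M}; both work.
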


\begin{proof}Let $T:X\to Z$ be an isomorphism onto. Fix a countable set $Y$ and a finite list of formulas $\Phi$. We may without loss of generality assume that $X$, $T$, $T^{-1}$, $Z$ and all the sets from the statement of Lemma \ref{l:basics-in-M} are elements of $Y$ and that our $\Phi$ contains all the formulas from the statement of Lemma \ref{l:basics-in-M}.

Let $\M$ be a family satisfying Definition \ref{d:richNice} (i)--(iii) for the space $X$. We will show that this family is sufficient for the space $Z$ as well. First, notice that $T(X_M) = Z_M$. Indeed, by Lemma \ref{l:basics-in-M}, we have $T(X\cap M)\subset Z\cap M$ and $T^{-1}(Z\cap M)\subset X\cap M$; hence, $T(X\cap M) = Z\cap M$ and since $T$ is isomorphism, $T(X_M) = Z_M$. Thus, whenever $M$, $N\in \M$, we have
$$M\subset N \Longleftrightarrow \overline{X\cap M}\subset \overline{X\cap N} \Longleftrightarrow \overline{Z\cap M}\subset \overline{Z\cap N}$$
and it remains to show that $\{Z_M:\; M\in\M\}$ is a rich family of separable subspaces in $Z$. This easily follows from the above and the fact that $\{X_M:\; M\in\M\}$ is a rich family of separable subspaces in $X$.
\end{proof}

\begin{proof}[Proof of Theorem \ref{t:CompactRichModely}]Let $K$ be a zero-dimensional compact. By Lemma \ref{l:Isomorphism}, we may assume that $X = C(K)$. Denote by $\operatorname{Clop} K$ the set of clopen sets in $K$. For $A\in \operatorname{Clop} K$, we denote by $\iota_A$ the indicator function of $A$; i.e. the 
(continuous) function defined by $\iota_A(x): = 1$ if $x\in A$ and $\iota_A(x): = 0$ otherwise. Using the Stone--Weierstrass theorem, it is easy to see that $\overline{\rm sp}\, \{\iota_A:\; A\in\operatorname{Clop} K\} = C(K)$. Hence, by \cite[Lemma 2.5]{ck}, it suffices to show there is a countable set $C$ and a finite list of formulas $\Phi$ such that for every $M\prec (\Phi; C)$ and every $B\in \operatorname{Clop} K$,
$$
\iota_B \in \overline{\rm sp}\, \{\iota_A:\; A\in\operatorname{Clop} K\cap M\}\; \implies\; B\in M.
$$

Fix a finite list of formulas $\Phi$ and a countable set $C$  from the statement of Lemma \ref{l:basics-in-M}. We may assume that $C\supset\Q$. Fix any $M \prec (\Phi; C)$ and any $B\in \operatorname{Clop} K$ with $\iota_B \in \overline{\rm sp}\, \{\iota_A:\; A\in\operatorname{Clop} K\cap M\}$. Then there is $f\in {\rm sp}\, \{\iota_A:\; A\in\operatorname{Clop} K\cap M\}$ with $\|\iota_B - f\| < \frac{1}{4}$. By Lemma \ref{l:basics-in-M}, $M$ is closed under finite intersections and differences of sets; hence, we can easily verify (e.g. by induction) that $f$ can be written as a linear combination of indicator functions of pairwise disjoint sets from $\operatorname{Clop} K\cap M$. Since $\|\iota_B - f\| < \frac{1}{4}$, we have that $B$ is the union of finitely many sets from $\operatorname{Clop} K\cap M$; hence, by Lemma \ref{l:basics-in-M}, $B\in M$.
\end{proof}

\begin{remark}\rm 
Let $\Gamma$ be a set of cardinality greater than continuum and let $\ell_\infty^c(\Gamma)$ denote the closed subspace of $\ell_\infty(\Gamma)$ consisting of all vectors with countable support.
This space does not have a fundamental minimal system and is of density greater than $\aleph_1$. However, it is isomorphic to the space $C(K)$, where $K$ is the zero-dimensional compact space constructed from the Boolean algebra of subsets of $\Gamma$ which are either countable or have countable complement.
Theorem \ref{t:CompactRichModely} applies to the space $\ell_\infty^c(\Gamma)$, and thus we
get a partial positive answer to \cite[Question 2.8]{ck}. 
\end{remark}

\end{document}